\documentclass[a4paper, reqno]{amsart}
\usepackage{amssymb,amscd,amsfonts,amsbsy,nicefrac,mathtools}
\usepackage[utf8]{inputenc}
\usepackage{amsmath,amsthm,amssymb,amsfonts,enumitem}
\usepackage{mathrsfs}
\usepackage[nobysame]{amsrefs}
\usepackage{hyperref}
\usepackage[pdftex]{graphicx,color}
\usepackage{graphicx}
\usepackage{marvosym}
\usepackage{scalerel}
\usepackage{bbm}
\usepackage{bm}
\usepackage{setspace}
\usepackage{tikz}
\usepackage{soul}
\usepackage[normalem]{ulem}

\usepackage[T1]{fontenc}

\usepackage{fancyhdr} 
\fancyhf{} 
\cfoot{\thepage}
\pagestyle{plain} 
\setlength{\footskip}{40pt}


\usepackage[a4paper,tmargin=3cm,bmargin=3cm,lmargin=3cm,rmargin=3cm]{geometry}

\newtheorem{thm}{Theorem}[section]
\newtheorem{rem}[thm]{Remark}
\newtheorem{lem}[thm]{Lemma}

\newtheorem{cor}[thm]{Corollary}
\newtheorem{prop}[thm]{Proposition}
\newtheorem{defn}[thm]{Definition}


\newcommand{\R}{\mathbb{R}}
\newcommand{\bbR}{\mathbb{R}}
\newcommand{\C}{\mathbb C}
\newcommand{\bbC}{\mathbb{C}}

\newcommand{\Z}{\mathbb{Z}}
\newcommand{\bbZ}{\mathbb{Z}}
\newcommand{\nat}{\mathbb{Z}_{+}}
\newcommand{\N}{\mathbb{N}}
\newcommand{\bbN}{\mathbb{N}}

\newcommand{\supp}{\operatorname{supp}}

\newcommand{\dd}{\,\mathrm{d} }

\newcommand{\abs}[1]{\left|#1\right|}
\newcommand{\set}[1]{\left\{#1\right\}}
\newcommand{\brkt}[1]{\left(#1\right)}
\newcommand{\jap}[1]{\left\langle #1\right\rangle}


\newcommand{\m}[1]{\begin{equation*}
\begin{split}
#1
\end{split}
\end{equation*}}
\newcommand{\nm}[2]{\begin{equation}\label{#1}
\begin{split}
#2
\end{split}
\end{equation}}




\begin{document}

\title{{Christoffel transform and multiple orthogonal polynomials}}
\date{\today}
\keywords{Multiple orthogonal polynomials; Christoffel transform; zero interlacing; recurrence coefficients}
\subjclass[2000]{42C05, 47B36, 65Q30}
\author{Rostyslav Kozhan$^1$}
\author{Marcus Vaktnäs$^{2}$}
\address{$^{1}$Department of Mathematics, Uppsala University, S-751 06 Uppsala, Sweden, kozhan@math.uu.se}
\address{$^{2}$Department of Mathematics, Uppsala University, S-751 06 Uppsala, Sweden, marcus.vaktnas@math.uu.se}
\begin{abstract}
We investigate multiple orthogonal polynomials associated with the system of measures obtained by applying a Christoffel transform to each of the orthogonality measures. We present an algorithm for computing the transformed recurrence coefficients and determinantal formulas for the transformed multiple orthogonal polynomials of type I and type II.

We apply these results to show that zeros of multiple orthogonal polynomials of an Angelesco or an AT system interlace with the zeros of the polynomials corresponding to its one-step Christoffel transform. 
This allows us to prove a number of  interlacing properties satisfied by the
multiple orthogonality analogues of classical orthogonal polynomials. For the discrete polynomials, this also produces an estimate on the smallest distance between consecutive zeros.

We also identify a connection between the Christoffel transform of orthogonal polynomials and 
multiple orthogonality systems
containing a finitely supported measure.
     In consequence, the compatibility relations for the nearest neighbour recurrence coefficients provide a new algorithm for the computation of the Jacobi coefficients of the one-step or multi-step Christoffel transforms.
\end{abstract}
\maketitle

\section{Introduction}
Let $\mu$ be a positive Borel measure on the real line with all the moments $c_j:=\int x^j \dd\mu(x)$ finite. Denote $P_n(x)$, $n\in\bbN :=\{k\in\bbZ: k\ge 0\}$, to be the  monic orthogonal polynomial of degree $n$ with respect to the inner product
\begin{equation}\label{eq:inner}
    \jap{f(x),g(x)} = \int_\bbR f(x) g(x) \dd\mu(x).
\end{equation}
These polynomials satisfy the famous three-term recurrence relation
\begin{equation}\label{eq:three-term0}
xP_n(x) = P_{n+1}(x) + b_nP_n(x) + a_nP_{n-1}(x)
\end{equation}
for some $a_n>0$ and $b_n\in\bbR$, called the Jacobi coefficients of $\mu$. 

Given a point $z_0\in\bbR$, a one-step Christoffel transform of $\mu$ is a new (potentially signed) measure $\widehat{\mu}$ defined by 
\begin{equation}\label{eq:ChrM10}
    \int f(x)\dd\widehat\mu(x) = 
    \int f(x) ({x-z_0})\dd\mu(x).
\end{equation}
More generally, given a polynomial $\Phi(x)=\prod_{j=1}^m (x-z_j)$ we define the multi-step Christoffel transform to be $\widehat{\mu}$ given by
\begin{equation}\label{eq:ChrM0}
    \int f(x)\dd\widehat\mu(x)= 
    \int f(x){\Phi(x)}\dd\mu(x),
\end{equation}
which, of course, can be viewed as the one-step transform repeated $m$ times for each of the roots of $\Phi(x)$.

It is a natural question to understand the relationship between the orthogonal polynomials $(P_n(x))_{n=0}^\infty$ of $\mu$  and $(\widehat{P}_n(x))_{n=0}^\infty$ of $\widehat\mu$. In the one-step case~\eqref{eq:ChrM10} 
the following simple relation holds true:
\nm{eq:one step ct0}{\widehat{P}_n(x) = \frac{1}{x-z_0} \left({P}_{n+1}(x) - \frac{{P}_{n+1}(z_0)}{{P}_{n}(z_0)} {P}_{n}(x)\right).}
More generally, if $\deg\Phi=m$, then $\widehat{P}_n$ can be expressed in terms of $P_n,P_{n+1},\ldots, P_{n+m}$ using the Christoffel determinantal formula, shown in~\cite{Christoffel}.

Another matter of interest here is the relationship between the Jacobi coefficients of $\widehat\mu$ and of $\mu$. For the one-step case $\deg\Phi=1$ there 
are a number of closely related algorithms (among them are the $qd$ algorithm of Rutishauser~\cite{QD}, Galant's~\cite{Galant}, and Gautschi's~\cite{Gautschi book}) 
that allow to compute $\widehat{a}_n$'s and $\widehat{b}_n$'s recursively from $a_n$'s and $b_n$'s. 
There exist explicit algorithms for the quadratic factors, $\deg\Phi = 2$, see, e.g., ~\cite[Sect 2.4.3]{Gautschi book}. For the general case $\deg\Phi = m$, one typically applies the one-step or two-step algorithms repeatedly.

The Christoffel transform, as well as the closely connected topic of the Darboux transformations, is a very well-studied topic both in pure and applied mathematics, see, e.g.,~\cite{BaiDer,BueMar,Chihara,Christoffel,Mar91,SpiZhe,Zhe97} and~\cite{Bueno and Dopico,Galant,Gal92,Gautschi,Gautschi book,Golub and Kautsky,GolKau83,KauGol}.


\medskip

Now let us introduce multiple orthogonal polynomials with respect to a system of two measures $(\mu_1,\mu_2)$ on $\bbR$ (we work in the more general setting of $r$ measures from Section~\ref{ss:prelim} onwards). For $(n_1,n_2)\in \bbN^2$, let $P_{n_1,n_2}(x)$ be a non-zero monic polynomial of degree $n_1+n_2$ satisfying
\begin{align}
    \label{eq0:orth1} \int_\bbR P_{n_1,n_2}(x)x^p \dd \mu_1(x) &= 0,\qquad p = 0,1,\dots,n_1-1, \\
     \label{eq0:orth2} \int_\bbR P_{n_1,n_2}(x)x^p \dd \mu_2(x) &= 0,\qquad p = 0,1,\dots,n_2-1. 
\end{align}
$P_{n_1,n_2}(x)$ is then called a multiple orthogonal polynomial at the multi-index $(n_1,n_2)$. We say that $(n_1,n_2)$ is normal for $(\mu_1,\mu_2)$ if such $P_{n_1,n_2}$ exists and is unique.

Assuming sufficiently many indices are normal, these polynomials satisfy~\cite{NNR,Ismail} the nearest neighbour recurrence relations (compare with~\eqref{eq:three-term0})
\begin{align}
   \label{eq0:NNRR1} xP_{n_1,n_2}(x) = P_{n_1+1,n_2}(x) + b_{n_1,n_2;1}P_{n_1,n_2}(x) +a_{n_1,n_2;1}P_{n_1-1,n_2}(x)+a_{n_1,n_2;2}P_{n_1,n_2-1}(x), \\
   \label{eq0:NNRR2} xP_{n_1,n_2}(x) = P_{n_1,n_2+1}(x) + b_{n_1,n_2;2}P_{n_1,n_2}(x) +a_{n_1,n_2;1}P_{n_1-1,n_2}(x)+a_{n_1,n_2;2}P_{n_1,n_2-1}(x).
\end{align}
Subtracting \eqref{eq0:NNRR1} and \eqref{eq0:NNRR2} we can also obtain 
\begin{equation}\label{eq0:CCPoly}
    P_{n_1+1,n_2}(x) - P_{n_1,n_2+1}(x)  = (b_{n_1,n_2;2}-b_{n_1,n_2;1})P_{n_1,n_2}(x).
\end{equation}

The coefficients $a_{n_1,n_2;1},a_{n_1,n_2;2},b_{n_1,n_2;1},b_{n_1,n_2;2}$ in \eqref{eq0:NNRR1} and \eqref{eq0:NNRR2} are called the nearest neighbour coefficients. 
These coefficients satisfy a set of partial difference equations
~\cite[Eq. (3.6)--(3.8)]{NNR} 
which we will call the compatibility conditions, or CC, for short. 
\cite{Computing NNR} showed that these equations provide an algorithm that allows to recursively compute all the nearest neighbour recurrence coefficients $a_{n_1,n_2;1},a_{n_1,n_2;2},b_{n_1,n_2;1},b_{n_1,n_2;2}$ from the Jacobi coefficients of $\mu_1$ and $\mu_2$ (that is, from $a_{n_1,0;1},a_{0,n_2;2},b_{n_1,0;1},b_{0,n_2;2}$).

\medskip

The central idea of our paper is the simple observation that if $\mu_2$ is supported on $N$ distinct points $\{z_j\}_{j=1}^N$, then the multiple orthogonal polynomial $P_{n,N}(x)$ of the system $(\mu_1,\mu_2)$ coincides with $\widehat{P}_n(x) \Phi(x)$, where $\Phi(x) = \prod_{j=1}^N (x-z_j)$ and $\widehat{P}_n(x)$ is the $n$-th orthogonal polynomial of the Christoffel transform $\widehat{\mu}_1$ ~\eqref{eq:ChrM0} of $\mu_1$ corresponding to the polynomial $\Phi(x)$. Indeed, $\widehat{P}_n(x) \Phi(x)$ is monic, has the right degree, trivially satisfies~\eqref{eq0:orth1}--\eqref{eq0:orth2}, and it only remains to resolve the issue of uniqueness, which we do in Theorem~\ref{thm:ct moprl}. 

In particular, the nearest neighbour recurrence relation~\eqref{eq0:NNRR1} along locations $\big\{ (n,N): n\in\bbN\big\}$ reduces to the three-term recurrence relation for $\widehat{\mu}_1$ (one should observe that $a_{n,N;2} = 0$ for all $n$, see Theorem~\ref{thm:zero coefficients finite systems}), and the nearest neighbour coefficients along these locations coincide with the Jacobi coefficients of $\widehat{\mu}_1$. 

Taking the simplest case $N=1$, one realizes that Christoffel's formula~\eqref{eq:one step ct0} is
just~\eqref{eq0:CCPoly}, while Gautschi's algorithm~\cite{Gautschi book} for computing the Jacobi coefficients of the Christoffel transform is effectively the CC algorithm of~\cite{Computing NNR} (after minor modifications related to restricting the coefficients to the strip $\bbN\times \{0,1\}$, see Section~\ref{ss:NNCC}). Furthermore, one can show that the well-known Gauss--Radau quadrature rule for $\mu_1$ is just the multiple Gauss quadrature rule for $(\mu_1,\mu_2)$ with $N=|\supp\mu_2|=1$ (the Gauss--Lobatto rule corresponds to $N=|\supp\mu_2|=2$). 

For any $N$ the modified CC algorithm (see Section~\ref{ss:NNCC}) therefore provides an algorithm for computation of the Jacobi coefficients of the multi-step Christoffel transform. It would be interesting to find out if there is any computational benefit of this algorithm compared to the repeated use of the one-step/two-step Gautschi/Galant algorithm. Such questions are important in numerical mathematics, see, e.g.,~~\cite{Bueno and Dopico,Galant,Gal92,Gautschi,Gautschi book,Golub and Kautsky,GolKau83,KauGol} and references therein. 
We will not pursue this in this paper. 

Our main focus is the study of the {\it multiple} Christoffel transform 
$$
 (\widehat\mu_1,\ldots,\widehat\mu_{r})
 =
  (\Phi\widehat\mu_1,\ldots,\Phi\widehat\mu_{r})
$$
of the multiple orthogonality system $(\mu_1,\ldots,\mu_r)$ for $r\ge 2$. 
Such a transform appears naturally when one studies the multiple Gauss quadrature with fixed nodes at the zeros of $\Phi$.

We show how one can use the CC algorithm to compute the nearest neighbour recurrence coefficients of $(\widehat{\mu}_1,\ldots,\widehat{\mu}_r)$ (Section~\ref{ss:NNCC}) and establish the determinantal formula for 
the multiple orthogonal polynomials for  $(\widehat{\mu}_1,\ldots,\widehat{\mu}_r)$ (see Section~\ref{ss:CT type 2} for type II, and Section~\ref{ss:CT type 1} for type I). 

For the special case $N=1$ the determinantal formulas 
are known from the earlier literature: see ~\cite[Prop~3.2]{ADMVA} for type II, and ~\cite{BFM22} for type I for the case of two measures and multi-indices along the step-line. 
During the preparation of the manuscript there appeared ~\cite{ManasRojas} studying multiple Christoffel transforms using another approach (the Gauss--Borel factorization) for the step-line multi-indices.
 

In Section~\ref{ss:repeated} we demonstrate that CC algorithm can be used to compute {\it repeated} Christoffel transforms, which is the natural setting for the $qd$ algorithm of Rutishauser~\cite{QD} and the discrete-time Toda lattices in one (see, e.g.,~\cite{SpiZhe}) and multiple dimensions, see~\cite{ADMVA,Dol} and references therein. 

In Section~\ref{ss:recurrNorm} we classify all possible nearest neighbour recurrence coefficients $\{a_{\bm{n},j},b_{\bm{n},j}\}$ that can occur for maximally-normal systems  (such systems are called perfect). This was proved for systems with two  positive infinitely-supported measures in~\cite{Integrable Systems}. We provide an alternative simple proof that allows either measures or linear moment functionals which may be finitely or infinitely supported. The main difference is that for finitely supported $\mu_j$'s the $a_{\bm{n},j}$-coefficients must be zero not only on the initial marginal indices (that is, with $n_j=0$) but also on the final ones (with $n_j = |\supp\,\mu_j|$). 

The next portion of the results (Section~\ref{ss:interlacing}) concerns interlacing of the zeros of multiple orthogonal polynomials $P_{\bm{n}}$ for $(\mu_1,\ldots,\mu_r)$ and $\widehat{P}_{\bm{n}}$ for $(\widehat{\mu}_1,\ldots,\widehat{\mu}_r)$ when $\deg\Phi = 1$. We show that the zeros of $P_{\bm{n}}$  and $\widehat{P}_{\bm{n}}$ interlace for a wide class of systems including all Angelesco and AT systems, and the same result holds for type I polynomials for a class of measures containing all Angelesco systems. In particular, this applies to multiple Laguerre of the first and second kind, Jacobi--Pi\~{n}eiro, Angelesco--Jacobi, Jacobi--Laguerre, Jacobi--Hermite, Charlier, Meixner of the first and second kind, Krawtchouk, and Hahn (Sections~\ref{ss:interlacingContinuous} and~\ref{ss:interlacingDiscrete}). This type of interlacing was shown very recently in~\cite{MarMor24,dosSan} for Angelesco--Jacobi, Jacobi--Laguerre, Jacobi--Hermite systems for type II polynomials along the step-line multi-indices using much more involved arguments, see also~\cite{MarMorPer} for related results which use the notion of free convolution. 

The interlacing results for the discrete systems then produces the lower bound 1 for the distance between two consecutive zeros (see Section~\ref{ss:Minimal}), a result that is well-known for the $r=1$ case, see~\cite{ChiSta,Lev67,KraZar}.

Note that the result of one-step or multi-step Christoffel transform is always a positive measure if all of the zeros of $\Phi(x)$ fall outside of the interior of the convex hull of $\supp\,\mu$ (or if there are an even number of them at each gap of the support). Otherwise however, $\widehat\mu$ is a signed/complex measure, which is convenient to view as a linear moment functional. It does not take too much extra effort to allow $\mu$ to be a linear moment functional from the beginning, which is what we do starting from Section~\ref{ss:prelim} onwards. 


In a companion paper~\cite{KVGeronimus} we obtain determinantal formulas for type I and type II multiple orthogonal polynomials for rational perturbations of measures, which includes the general Geronimus~\cite{Geronimus} and Uvarov~\cite{Uvarov} transforms, as well as the Christoffel transforms with different polynomials $\Phi_j$ for each $\mu_j$.

\subsubsection*{Acknowledgements}
Most of the results that appear in the current paper were part of M.V.'s 2021 Master Thesis in Uppsala University under the supervision of R.K., as reported in ~\cite{MVthesis}.

\section{Preliminaries}\label{ss:prelim} 
\subsection{Orthogonal polynomials with respect to moment functionals }\label{ss:quasi}
\hfill\\

We use the notation $\bbN:=\{k\in\Z: k\ge 0\}$ 
and  $\nat:=\{k\in\bbZ: k>0\}$.
Let us assume that we are given an arbitrary sequence $\{c_n\}_{n=0}^\infty$ of complex numbers which will be referred to as the moment sequence. Define the corresponding moment functional $\mu$ to be the linear map on the space of all polynomials such that
\begin{equation}\label{eq:momentFunctional}
\mu[x^n] = c_n, \qquad n\in\bbN.
\end{equation}
Associated to $\mu$ we have the bilinear form
\begin{equation}\label{eq:bilinear form}{\jap{P(x),Q(x)} = \mu[P(x)Q(x)].}
\end{equation}
In particular, note that
\begin{equation}\label{eq:bilinear form property}{\jap{P(x)R(x),Q(x)} = \jap{P(x),R(x)Q(x)}}
\end{equation}
for any choice of polynomials $P$, $Q$, and $R$. Orthogonal polynomials with respect to $\mu$ are 
non-zero polynomials $P_n(x)$ such that $\deg{P_n} \leq n$ and
\begin{equation}\label{eq:op def}{\jap{P_n(x),x^p} = 0, \qquad p = 0,1,\dots,n-1.}
\end{equation}
Such polynomials always exist, since solving \eqref{eq:op def} for the first $n+1$ Maclaurin coefficients results in a homogeneous system of linear equations with more columns than rows. If we fix the coefficient at $x^n$ we get a linear system with coefficient matrix
\begin{equation}\label{eq:matrix}
{M_n = \begin{pmatrix}
c_0 & c_1 & \cdots & c_{n-1} \\
c_1 & c_2 & \cdots & c_{n} \\
\vdots & \vdots & \ddots & \vdots \\
c_{n-1} & c_n & \cdots & c_{2n-2} \\
\end{pmatrix}.}
\end{equation}
Then we see that $\Delta_n = \det{M_n} \neq 0$ if and only if $P_n$ is unique up to multiplication by a constant and $\deg{P_n} = n$. In this case we always take $P_n$ to be monic. 

Denote $\mathcal{L}_\infty$ to be the set of all {\it quasi-definite} moment functionals, which are those $\mu$ for which $\Delta_n \ne 0$ for all $n\in\bbN$. For such $\mu$ the monic orthogonal polynomial $P_n$ is unique for each $n\in\bbN$.
The polynomials satisfy  the three-term recurrence relation
\nm{eq:three-term}{xP_n(x) = P_{n+1}(x) + b_nP_n(x) + a_nP_{n-1}(x), \qquad n\in \bbN}
for some complex numbers $a_n$ and $b_n$, called the Jacobi coefficients of $\mu$ (in the case $n = 0$ we formally take $a_0 = 0$ and $P_{-1} = 0$).

It is well known that $a_n \neq 0$ for all $n > 0$ 
. Conversely, Favard's Theorem states that any set of $a_n$ and $b_n$, with $a_0 = 0$ and $a_n \neq 0$ for $n>0$, generates a sequence of polynomials from \eqref{eq:three-term} that are the orthogonal polynomials 
with respect to some moment functional $\mu$. 

The tridiagonal matrix 
\begin{equation}\label{eq:jacobi}
J=\left(
\begin{array}{cccc}
b_0&1&{0}&\\
a_1 &b_1&1&\ddots\\
{0}&a_2 &b_3&\ddots\\
&\ddots&\ddots&\ddots\end{array}\right)
\end{equation}
will be called the (``monic'') Jacobi matrix associated with $\mu$. It is the matrix of the map $P(x) \mapsto xP(x)$ in the basis $\set{P_n}_{n = 0}^{\infty}$, see \eqref{eq:three-term}. 

Finally, define the Christoffel--Darboux kernel via
\begin{equation}\label{eq:CDkernel}
    K_n(x,y)= \sum_{j = 0}^{n-1}\frac{{P_j(x)}P_j(y)}{\jap{P_j,P_j}}.
\end{equation}
Then 
the Christoffel--Darboux identity takes place:
\begin{equation}\label{eq:CDformula}
    K_{n}(x,y)= \frac{1}{\jap{P_{n-1},P_{n-1}}}\frac{P_{n}(x)P_{n-1}(y)-P_{n-1}(x)P_{n}(y)}{x-y}.
\end{equation}

For more on the basics of orthogonal polynomials with respect to moment functionals, see for example \cite{Chihara}.

Let $\mathcal{M}_\infty$ be the set of measures $\mu$ on $\R$ with infinite support and all the moments 
\begin{equation}\label{eq:measure}
    c_n=\int x^n d\mu(x), \quad n\in\bbN,
\end{equation}
finite. Such a measure generates a quasi-definite moment functional~\eqref{eq:momentFunctional} which with a mild abuse of notation we also denote by $\mu$. In particular,~\eqref{eq:bilinear form} becomes the usual inner product in $L^2(\mu)$. With this convention, we can view $\mathcal{M}_\infty$ as a subset of $\mathcal{L}_\infty$. The setting $\mu\in\mathcal{M}_\infty$ corresponds to the standard theory of orthogonal polynomials with $a_n > 0$ for all $n\ge 1$ and $b_n\in\bbR$ for all $n \ge 0$.

\subsection{$\mu$ associated with finite (complex) Jacobi matrices}
\hfill\\

If one takes a measure $\mu$ on $\bbR$ supported on exactly $N$ distinct points $\{z_j\}_{j=1}^N\subset\bbR$, $N\in\nat$, then it is known that $\Delta_n \ne 0$ for $0\le n\le N$ and $\Delta_n = 0$ for $n>N$. Consequently, only $\{P_n(x)\}_{n=0}^N$ are uniquely defined, with $P_N(x) = \prod_{j=1}^N (x-z_j)$. The three-term recurrence~\eqref{eq:three-term} holds for $0\le n \le N-1$ with $a_n>0$ for $1\le n\le N-1$. The corresponding Jacobi matrix $J$ in \eqref{eq:jacobi} is finite of size $N\times N$ with $\{b_n\}_{n=0}^{N-1}$ on the diagonal and $\{a_n\}_{n=1}^{N-1}$ on the subdiagonal. Denote the set of such $N$-finitely supported measures 
by $\mathcal{M}_N$.

In what follows we want to allow \textit{complex} finite Jacobi matrices and the associated linear functionals. Therefore we define $\mathcal{L}_N$, for each $N\in\nat$, to be the set of all the moment functionals $\mu$ for which $\Delta_n \ne 0$ for $1\le n\le N$ and $\Delta_n = 0$ for $n>N$. 


\begin{lem}\label{lem:finite rank}
    Suppose $\mu\in\mathcal{L}_N$ for some $N\in\nat$. Then
    \begin{equation}\label{eq:preOrthLemma}
        \jap{P_N(x),x^p} = 0, \qquad p \in \bbN. 
    \end{equation}
    Moreover, we have
    \begin{equation}\label{eq:orthogLemma}
    \jap{P(x),x^p} = 0, \qquad p=0,1,\ldots,N-1,
    \end{equation}
    if and only if $P(x)$ is divisible by $P_N(x)$.
\end{lem}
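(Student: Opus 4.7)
The plan is to prove the extended orthogonality~\eqref{eq:preOrthLemma} by induction on $p$, with base case $0 \le p \le N-1$ being just the defining orthogonality of $P_N$, and then to deduce the divisibility characterization~\eqref{eq:orthogLemma} as a short consequence via Euclidean division.

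For the inductive step of~\eqref{eq:preOrthLemma}, fix $p \ge N$ and assume $\jap{P_N(x), x^j} = 0$ for all $j = 0, 1, \ldots, p-1$. Since $p + 1 > N$ and $\mu \in \mathcal{L}_N$, we have $\Delta_{p+1} = 0$, so by the discussion following~\eqref{eq:matrix} there exists a non-zero polynomial $Q(x)$ with $\deg Q \le p$ satisfying $\jap{Q(x), x^j} = 0$ for $j = 0, 1, \ldots, p$. Write $Q = qP_N + R$ by Euclidean division, with $\deg R < N$. The bilinear symmetry~\eqref{eq:bilinear form property} gives
\begin{equation*}
\jap{R(x), x^j} = \jap{Q(x), x^j} - \jap{P_N(x), q(x)x^j}, \qquad j = 0, \ldots, N-1,
\end{equation*}
and since $\deg(q(x)x^j) \le (p-N)+(N-1) = p-1$ for $j \le N-1$, the inductive hypothesis kills the second term while the first vanishes by orthogonality of $Q$. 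A non-zero $R$ of degree $k < N$ orthogonal to $1, x, \ldots, x^k$ would furnish a non-trivial kernel element of $M_{k+1}$, contradicting $\Delta_{k+1} \ne 0$; hence $R = 0$ and $Q = qP_N$ with $\deg q = m \le p - N$ and leading coefficient $q_m \ne 0$. Applying the orthogonality $\jap{Q(x), x^{p-m}} = 0$, which is legitimate since $0 \le p-m \le p$, and expanding via~\eqref{eq:bilinear form property} together with the inductive hypothesis on exponents $p-m, \ldots, p-1$ yields
\begin{equation*}
0 = \sum_{j=0}^m q_j \jap{P_N(x), x^{p-m+j}} = q_m \jap{P_N(x), x^p},
\end{equation*}
so $\jap{P_N(x), x^p} = 0$, closing the induction.

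For~\eqref{eq:orthogLemma}, the $(\Leftarrow)$ direction is immediate from~\eqref{eq:preOrthLemma}: if $P = QP_N$, then $\jap{P(x), x^p} = \jap{P_N(x), Q(x)x^p} = 0$ for every $p$. For $(\Rightarrow)$, write $P = QP_N + R$ with $\deg R < N$; the hypothesis together with~\eqref{eq:preOrthLemma} gives $\jap{R, x^p} = \jap{P, x^p} - \jap{P_N, Qx^p} = 0$ for $p = 0, \ldots, N-1$, and the same kernel argument as above forces $R = 0$, so $P$ is divisible by $P_N$. The main obstacle is the inductive step in~\eqref{eq:preOrthLemma}: one must produce a useful non-zero $Q$ (supplied by $\Delta_{p+1} = 0$), show that $Q$ necessarily has the form $qP_N$ by combining Euclidean division with the inductive hypothesis, and then judiciously choose the test exponent $x^{p-\deg q}$ so that the expansion isolates $\jap{P_N(x), x^p}$ as the sole unknown.
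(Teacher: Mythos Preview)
Your proof is correct and follows essentially the same inductive strategy as the paper: both use that $\Delta_{p+1}=0$ supplies a non-zero polynomial $Q$ of degree $\le p$ orthogonal to $1,\dots,x^p$, and then pair $P_N$ against a suitable degree-$p$ polynomial built from $Q$ so that the inductive hypothesis kills the lower-order terms. The only organizational difference is that you first establish $Q=qP_N$ (via Euclidean division and $\Delta_{k+1}\ne 0$ for $k<N$) before testing against $x^{p-\deg q}$, whereas the paper skips this and directly uses $\langle P_N, x^{k}Q\rangle=\langle Q, x^{k}P_N\rangle=0$ with $k=p-\deg Q$; both routes isolate $\langle P_N,x^p\rangle$ in the same way, and the second part of the lemma is handled identically.
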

\begin{proof}
    Since $\Delta_{N+1} = 0$, there has to exist some non-zero $P_{N+1}$ solving \eqref{eq:op def} (at $N+1$) for $n = N+1$ with degree $\deg{P_{N+1}} < N+1$. This is because $\Delta_{N+1} = 0$ implies the existence of two linearly independent monic solutions to \eqref{eq:op def} of degree $\leq N+1$, and if they both have degree $N+1$ then their difference is also non-zero and solves \eqref{eq:op def} but has degree $< N + 1$. We can always scale $P_{N+1}$ to be monic. 
    
    Since $\Delta_N \neq 0$, we must have $P_{N+1} = P_N$, as there is only one monic solution at $N$. By the orthogonality relations of $P_{N+1}$ we then must have $\jap{P_N(x),x^N} = 0$. We proceed to prove by induction that $\jap{P_N(x),x^p} = 0$ for all $p \geq N$. By $\Delta_{n+1} = 0$ (assuming $n \geq N$), there is some monic $P_n$ (solving \eqref{eq:op def} at $n$) with $\deg{P_n} = n-k$, $0 \leq k \leq n-N$, such that $\jap{P_n(x),x^n} = 0$ (since there is some monic $P_{n+1}$ with $\deg{P_{n+1}} \leq n$). In particular, $\jap{P_n(x),Q(x)} = 0$ for every polynomial $Q$ with $\deg{Q} \leq n$, so we must have $\jap{P_N(x), x^kP_n(x)} = \jap{P_n(x),x^kP_N(x)} = 0$, by \eqref{eq:bilinear form property}. Assuming $\jap{P_N(x),x^p} = 0$ is true for all $p < n$ (and here $n > N$), we then end up with $\jap{P_N(x),x^n} = \jap{P_N(x),x^kP_n(x)} = 0$, since $x^kP_n(x) = x^n + o(x^n)$, and $\jap{P_N(x),o(x^n)} = 0$. This proves \eqref{eq:preOrthLemma}. 
    
    Now for the second part, we have $\jap{P_N(x)Q(x),x^p} = \jap{P_N(x),x^pQ(x)} = 0$ by \eqref{eq:preOrthLemma}. Conversely, suppose that $P$ satisfies~\eqref{eq:orthogLemma} and write $P = P_N q + r$ with $\deg r < N$. Since $P_N$ is orthogonal to any polynomial, we get that $r$ also satisfies \eqref{eq:orthogLemma}. This is only possible if $r\equiv 0$. 
\end{proof}

\begin{thm}\label{thm:Favard}
    If $\mu\in\mathcal{L}_N$, $N\le \infty$, then $\{P_n(x)\}_{n=0}^N$ satisfy the three-term recurrence relation ~\eqref{eq:three-term} for $0\le n \le N-1$ with $a_n\in\C\setminus\{0\}$ for $1\le n\le N-1$ and $b_n\in\C$  for $0\le n\le N-1$. Conversely, for any non-zero $\set{a_n}_{n = 1}^{N-1}$ and any $\set{b_n}_{n = 0}^{N-1}$ there exists a functional $\mu\in\mathcal{L}_N$, unique up to multiplication by a non-zero constant, with exactly these Jacobi coefficients.
\end{thm}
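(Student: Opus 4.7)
The plan is to handle the two directions separately, treating $N<\infty$ and $N=\infty$ uniformly, with the ``extension'' conditions needed only in the finite case.

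\textbf{Forward direction.} Set $h_n:=\jap{P_n,P_n}$ and $\Delta_0:=1$. A routine determinantal computation (cofactor expansion of the monic $P_n$) gives the standard identity
\[
h_n \;=\; \mu[P_n(x)\,x^n] \;=\; \frac{\Delta_{n+1}}{\Delta_n},
\]
so the hypothesis $\mu\in\mathcal{L}_N$ forces $h_n\ne 0$ for $0\le n\le N-1$. For such $n$, $xP_n$ has degree $n+1\le N$ and expands in the basis $\{P_k\}_{k=0}^{n+1}$; applying \eqref{eq:bilinear form property} and \eqref{eq:op def} kills every coefficient with $k\le n-2$, yielding the recurrence \eqref{eq:three-term}. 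Pairing it with $P_{n-1}$ and using \eqref{eq:bilinear form property} once more, I would then read off $a_n = h_n/h_{n-1}\ne 0$ for $1\le n\le N-1$.

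\textbf{Converse direction.} Given nonzero $\{a_n\}_{n=1}^{N-1}$ and arbitrary $\{b_n\}_{n=0}^{N-1}$, I first generate monic $P_0,\ldots,P_N$ via \eqref{eq:three-term} (with $P_{-1}:=0$, $P_0:=1$), then build $\mu$ moment by moment. Fix any $c_0\ne 0$, and for $n\ge 1$ let $c_n$ be the unique solution of
\[
\mu[P_n]=0 \quad (1\le n\le N-1), \qquad \mu[x^{n-N}P_N]=0 \quad (n\ge N).
\]
Each such equation has leading coefficient $1$ in $c_n$, so $c_n$ is pinned down from $c_0,\ldots,c_{n-1}$. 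When $N=\infty$ only the first family is used.

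The core of the proof---and the step I expect to be the main technical obstacle---is an induction on $k$ verifying
\[
\mu[x^k P_m]=0, \qquad 0\le k < m \le N.
\]
The base case $k=0$ is built in (including $\mu[P_N]=0$ from $n=N$ in the second family). For the inductive step I would write $x^{k+1}P_m = x^k(xP_m)$, expand $xP_m$ using \eqref{eq:three-term} (valid for $m\le N-1$), and observe that each of the three resulting terms $\mu[x^k P_{m+1}]$, $\mu[x^k P_m]$, $\mu[x^k P_{m-1}]$ is either covered by the hypothesis or, when $m+1=N$, by the posited conditions $\mu[x^k P_N]=0$. The case $m=N$ of the conclusion comes directly from the construction.

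With this orthogonality in hand, a brief calculation using \eqref{eq:three-term} yields $h_n = a_n h_{n-1}$ for $1\le n\le N-1$, hence $h_n = c_0\,a_1\cdots a_n\ne 0$. Combined with $\Delta_{n+1}=h_n\Delta_n$ this gives $\Delta_n\ne 0$ for $1\le n\le N$. In the finite case, for $m>N$ the zero-padded coefficient vector of $P_N$ lies in the kernel of the Hankel matrix $M_m$ by virtue of $\mu[x^i P_N]=0$ for all $i$, forcing $\Delta_m=0$. Since the constructed $P_n$'s satisfy \eqref{eq:three-term} with the given $a_n,b_n$, these are the Jacobi coefficients of $\mu$; uniqueness up to a nonzero scalar is immediate because every $c_n$ is pinned down by the choice of $c_0$.
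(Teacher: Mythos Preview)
Your proof is correct and follows essentially the same route as the paper's. The paper's argument is terser---it cites Chihara for the forward direction and for the skeleton of the converse, then in the finite case imposes exactly your conditions $\mu[P_n]=0$ ($1\le n\le N-1$) and $\mu[x^pP_N]=0$ (all $p\ge 0$), invoking Lemma~\ref{lem:finite rank} to justify that the latter are forced---but the substance is identical: build $\mu$ from these linear constraints, verify orthogonality by induction via the recurrence, read off $\jap{P_n,x^n}=a_n\cdots a_1 c_0\ne 0$ to get $\Delta_n\ne 0$ for $n\le N$, and use $\jap{P_N,x^p}=0$ to force $\Delta_n=0$ for $n>N$. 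Your explicit induction on $k$ and your kernel-of-$M_m$ argument for $\Delta_m=0$ are just the details the paper leaves implicit.
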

\begin{rem}\label{rem:Gus}
    See~\cite[Thm 4]{Gus} for a more detailed description of $\mu$ from $\mathcal{L}_N$: if the $N$ roots of $P_N$ are all distinct then $\mu$ can be represented as integration against a (complex) measure supported on these roots. If some of the roots overlap then $\mu$ contains derivative(s) of the Dirac delta function at the corresponding root. 
\end{rem}
\begin{proof}
      The first part of the statement follows from the same arguments as in ~\cite[Ch. 1, Sect. 3--4]{Chihara}. The second part follows very similarly to the proof of~\cite[Ch.1, Thm 4.4]{Chihara}. First define $\mu[1] = c \neq 0$ and $\mu[P_n(x)] = 0$ for $n = 1,\dots,N-1$, where $P_n$ are generated by the recurrence coefficients through \eqref{eq:three-term}. If we also define $\mu[x^p P_N(x)] = 0$ for all $p \in \bbN$ (see Lemma~\ref{lem:finite rank}) we can then extend $\mu$ uniquely to all polynomials. From the recurrence relation one can prove that $P_n$ are orthogonal polynomials with respect to $\mu$, for $n = 0,\dots,N-1$, and also $\jap{P_n,x^n} = a_n \dots a_1 \neq 0$, which implies $\Delta_n \neq 0$ for $n = 1,\dots,N$. From $\langle P_N(x),x^{p+N} \rangle = 0$ we get $\Delta_n = 0$ for each $n > N$, so $\mu$ belongs to $\mathcal{L}_N$. Since these assumptions on $\mu$ were necessary for $\set{P_n}_{n = 0}^{N}$ to be orthogonal with respect to some $\mu \in \mathcal{L}_N$, and the recurrence coefficients are uniquely determined by the recurrence, we get the full result.
\end{proof}

Finally, we define
\begin{equation}
    \mathcal{L} = \Big(\bigcup_{N=1}^\infty \mathcal{L}_N\Big) \cup \mathcal{L}_\infty.
\end{equation}
This is the set of moment functionals that we are working with throughout this paper. Note that $\mathcal{L}$ includes $\mathcal{M}=\Big(\bigcup_{N=1}^\infty \mathcal{M}_N\Big) \cup \mathcal{M}_\infty$, the set of all positive measures of finite or infinite support on $\bbR$ with finite moments.

\subsection{Christoffel transform}
\hfill\\

For $\mu\in\mathcal{L}$, a one-step Christoffel transform is a functional $\widehat{\mu}$ given by 
\begin{equation}\label{eq:ChrL1}
    \widehat\mu[x^n] = \mu[x^n(x-z_0)], \qquad n\in\bbN.
\end{equation}
If $\mu\in\mathcal{M}$, then $\widehat\mu$ becomes~\eqref{eq:ChrM10}.

More generally, if $\mu\in\mathcal{L}$ and $\Phi(x) = \prod_{j = 1}^m(x-z_j)$ is any polynomial, then we define the corresponding Christoffel transform
\begin{equation}\label{eq:ChrL}
    \widehat\mu[x^n] = \mu[x^n\Phi(x)], \qquad n\in\bbN.
\end{equation}
{We will also occasionally employ the notation $\Phi\mu$ for $\widehat\mu$ to make the dependence on $\Phi$ explicit.}
 If $\mu\in\mathcal{M}$ and $\Phi$ has real coefficients and does not change sign on the convex hull of $\supp(\mu)$, then $\Phi\mu$ is also in $\mathcal{M}$ and is given as in~\eqref{eq:ChrM0}.

{Some authors choose to work with the normalized version of $\Phi{\mu}$ given by
$\mu[\Phi(x)]^{-1}\mu[x^n\Phi(x)]$, under the assumption $\mu[\Phi(x)]\ne 0$. Note that the monic orthogonal polynomials and their recurrence coefficients for both versions of the Christoffel transform are the same, so this makes no significant difference.}

{Given any $\mu \in \mathcal{L}$ and its Christoffel transform $\widehat\mu=\Phi\mu$,} we write $\widehat{P}_n$ for the orthogonal polynomials with respect to $\widehat{\mu}$, and $\widehat{a}_n$ and $\widehat{b}_n$ for the Jacobi coefficients of $\widehat{\mu}$. {If $\mu, \widehat{\mu} \in \mathcal{L}_\infty$ then the following Christoffel determinantal formula~\cite{Christoffel} holds:}
\nm{eq:christoffel theorem}{\widehat{P}_n(x) = {\Phi(x)}^{-1}D_n^{-1}\det
\begin{pmatrix}
P_{n+m}(x) & P_{n+m-1}(x) & \cdots &  P_n(x) \\
P_{n+m}(z_1)  & P_{n+m-1}(z_1) & \cdots &  P_n(z_1) \\
\vdots & \vdots & \ddots & \vdots \\
P_{n+m}(z_m)  & P_{n+m-1}(z_m) & \cdots & P_n(z_m) 
\end{pmatrix}
,}
where $D_n$ is the normalizing constant
\nm{eq:normalizing constant one measure}{D_n = \det
\begin{pmatrix}
P_{n+m-1}(z_1)  & P_{n+m-2}(z_1) & \cdots & P_n(z_1)  \\
P_{n+m-1}(z_2)  & P_{n+m-2}(z_2) & \cdots & P_n(z_2)  \\
\vdots & \vdots & \ddots & \vdots \\
P_{n+m-1}(z_m)  & P_{n+m-2}(z_m) & \cdots & P_n(z_m) \end{pmatrix}
.}
In the one-step case $m=1$ this becomes 
\nm{eq:one step ct}{\widehat{P}_n(x) = \frac{1}{x-z_0} \left({P}_{n+1}(x) - \frac{{P}_{n+1}(z_0)}{{P}_{n}(z_0)} {P}_{n}(x)\right).}
By the Christoffel--Darboux formula~\eqref{eq:CDformula} we get
\nm{eq:cd one measure}{\widehat{P}_n(x) = 
\frac{\jap{P_n,P_n}}{P_n(z_0)} K_{n+1}(z_0,x).
}
{Polynomials $K_n(z_0,z)$ are sometimes referred to as the kernel polynomials of $\mu$.}

\eqref{eq:one step ct} together with the three-term recurrence \eqref{eq:three-term} can be used to generate the recurrence equations (see \cite{Gautschi book})
\nm{eq:Gautschi algorithm}{\widehat{b}_n - \delta_{n+1} & = b_{n+1} - \delta_n, \\
\widehat{a}_n - \delta_n\widehat{b}_n & = a_{n+1} - \delta_n b_n, \\
\delta_{n-1}\widehat{a}_n & = \delta_n a_n,}
with initial conditions $\widehat{a}_0 = 0$ and $\delta_0 = z_0 - b_0$. From this, it is possible to compute the Jacobi coefficients of $\widehat{\mu}$ from the Jacobi coefficients of $\mu$, through the following algorithm,
    \begin{align}\label{Galant algorithm}
    \begin{split}
        & \delta_{0}:= z_0 - b_{0};
    \\
    & \widehat{a}_{0}=0;
    \\
    & \widehat{b}_{0}= b_{0} - \frac{a_1}{\delta_{0}};
    \\
    & \mbox{for all } n\in\bbZ_+:
    \\
    & \qquad
     \delta_{n}:= \widehat{b}_{n-1} - b_{n} + \delta_{n-1};
    \\
    & \qquad
         \widehat{a}_{n} = a_{n}
    \frac{\delta_{n}}{\delta_{n-1}};
    \\
    & \qquad
    \widehat{b}_{n} = b_{n} + \frac{\widehat{a}_{n}-a_{n+1}}{\delta_{n}}.
    \end{split}
\end{align}

In Section~\ref{ss:CT type 2} we show how this can be generalized to the multi-step Christoffel transform using multiple orthogonal polynomials.

\subsection{Basics of multiple orthogonal polynomials on the real line (MOPRL)}
\hfill\\

Let $r\ge 1$ and consider a system of functionals $\bm{\mu} = (\mu_1,\dots,\mu_r) \in \mathcal{L}^r$.  Let us write $\jap{P(x),Q(x)}_j$ for $\mu_j[P(x)Q(x)]$. 
\begin{defn}
Given a multi-index $\bm{n}\in\bbN^r$, a type II multiple orthogonal polynomial is a non-zero polynomial $P_{\bm{n}}(x)$ such that $\deg{P_{\bm{n}}} \leq \abs{\bm{n}}:=n_1+\ldots+n_r$, and
\nm{eq:moprlII}{\jap{P_{\bm{n}}(x),x^p}_j = 0,\qquad p = 0,1,\dots,n_j-1 ,\qquad j = 1,\dots,r.}
\end{defn}
\begin{defn}
A type I multiple orthogonal polynomial is a non-zero vector of polynomials $\bm{A_{\bm{n}}} = (A_{\bm{n}}^{(1)},\dots,A_{\bm{n}}^{(r)})$ such that $\deg{A_{\bm{n}}^{(j)}} \leq n_j-1$, $j = 1,\dots,r$, and
\nm{eq:moprlI}
{\sum_{j = 1}^r \jap{A_{\bm{n}}^{(j)}(x), x^p}_j = 0,\qquad p = 0,1,\dots,\abs{\bm{n}}-2.}
\end{defn}
Note that $A_{\bm{n}}^{(j)}= 0$ when $n_j = 0$ (we take the degree of $0$ to be $-\infty$). Hence for $\bm{n} = \bm{0}$ there would be no non-zero solutions to \eqref{eq:moprlI}. In this case we take $\bm A_{\bm{0}} = \bm{0}$ (the $r$-vector of zero polynomials) as the only type I polynomial.


It is easy to show that for any multi-index $\bm{n}\in\bbN^r\setminus\set{\bm{0}}$ the following statements are equivalent:
\begin{itemize}
    \item[(i)] There is a unique monic type II multiple orthogonal polynomial $P_{\bm{n}}$ such that $\deg{P_{\bm{n}}} = \abs{\bm{n}}$;
    \item[(ii)] There is a unique type I multiple orthogonal polynomial $(A_{\bm{n}}^{(1)},\dots,A_{\bm{n}}^{(r)})$ such that 
    \begin{equation}\label{eq:typeInorm}\sum_{j = 1}^r \jap{A_{\bm{n}}^{(j)}(x), x^{\abs{\bm{n}}-1}}_j = 1.
    \end{equation}
    \item[(iii)] $\deg P_{\bm{n}} = \abs{\bm{n}}$ for every non-zero solution of \eqref{eq:moprlII};
    \item[(iv)] $\sum_{j = 1}^r \jap{A_{\bm{n}}^{(j)}(x), x^{\abs{\bm{n}}-1}}_j \neq 0$ for every non-zero solution of \eqref{eq:moprlI};
    \item[(v)] $\det M_{\bm{n}} \ne 0$, where 
\begin{equation}
    \label{eq:moprl matrix}
M_{\bm{n}} = \begin{pmatrix}
c_0^{(1)} & c_1^{(1)} & \cdots & c_{\abs{\bm{n}}-1}^{(1)} \\
c_1^{(1)} & c_2^{(1)} & \cdots & c_{\abs{\bm{n}}}^{(1)} \\
\vdots & \vdots & \ddots & \vdots \\
c_{n_1-1}^{(1)} & c_{n_1}^{(1)} & \cdots & c_{\abs{\bm{n}}+n_1-2}^{(1)} \\
\hline
& & \vdots \\
\hline
c_0^{(r)} & c_1^{(r)} & \cdots & c_{\abs{\bm{n}}-1}^{(r)} \\
c_1^{(r)} & c_2^{(r)} & \cdots &  c_{\abs{\bm{n}}}^{(r)} \\
\vdots & \vdots & \ddots & \vdots \\
c_{n_r-1}^{(r)} & c_{n_r}^{(r)} & \cdots & c_{\abs{\bm{n}}+n_r-2}^{(r)} \\
\end{pmatrix},
\end{equation}
where $c_n^{(j)}$ are the moments
\m{c_n^{(j)} = \mu_j[x^n]. }
\end{itemize}
\begin{defn}
If any and therefore each of the above conditions (i)--(v) are satisfied for an index $\bm{n} \neq \bm{0}$ then we say that $\bm{n}$ is normal. We always take the index $\bm{0}$ to be normal. 
\end{defn}
Whenever $\bm{n}$ is normal we will only work with polynomials satisfying (i) and (ii) above, that is, $P_{\bm{n}}$ will be monic and $\bm{A}_{\bm{n}}$ will satisfy~\eqref{eq:typeInorm}.

\begin{prop}\label{rem:perf}
    If $\mu_j \in \mathcal{L}_{N_j}$ with $N_j<\infty$, then indices $\bm{n}$ with $n_j > N_j$ can never be normal. 
\end{prop}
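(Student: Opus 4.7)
The plan is to invoke equivalence (v): the index $\bm{n}$ is normal if and only if $\det M_{\bm{n}} \ne 0$. I will exhibit a non-trivial linear dependence among the $n_j$ rows of $M_{\bm{n}}$ that belong to the $j$-th block, which forces the whole determinant to vanish.

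The key input is Lemma~\ref{lem:finite rank} applied to $\mu_j \in \mathcal{L}_{N_j}$: the monic orthogonal polynomial $P_{N_j}^{(j)}(x) = \sum_{i=0}^{N_j} \alpha_i x^i$ of $\mu_j$ (with $\alpha_{N_j} = 1$) satisfies $\jap{P_{N_j}^{(j)}(x),x^p}_j = 0$ for every $p \in \bbN$. Translated into moments, this yields the infinite family of scalar identities
\begin{equation*}
\sum_{i=0}^{N_j} \alpha_i\, c_{p+i}^j \;=\; 0 \qquad \text{for every } p \in \bbN.
\end{equation*}

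Next I would use these identities to expose a row relation inside the $j$-th block of $M_{\bm{n}}$. Row $k$ of that block, for $0 \le k \le n_j-1$, is the truncation $(c_k^j,\,c_{k+1}^j,\,\ldots,\,c_{k+|\bm{n}|-1}^j)$, and a column-by-column application of the above identity (with $p = k - N_j + q$ for $q = 0, 1, \ldots, |\bm{n}|-1$) shows that whenever $k \ge N_j$,
\begin{equation*}
\text{row } k \;=\; -\sum_{i=0}^{N_j-1} \alpha_i \cdot (\text{row } k - N_j + i).
\end{equation*}
Inducting on $k$, the rows indexed $N_j, N_j+1, \ldots, n_j-1$ all lie in the span of rows $0,1,\ldots,N_j-1$. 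Since $n_j > N_j$, this produces at least one non-trivial linear combination of rows of $M_{\bm{n}}$ (with coefficients supported entirely within the $j$-th block) that equals zero. Hence $\det M_{\bm{n}} = 0$, and by (v) the index $\bm{n}$ fails to be normal.

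I do not foresee any genuine obstacle here. The only thing worth pointing out is that an entry-wise linear identity among rows of the infinite Hankel matrix is automatically inherited by any truncation to finitely many columns, which is exactly the bridge from Lemma~\ref{lem:finite rank} to the finite matrix $M_{\bm{n}}$.
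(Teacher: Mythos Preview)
Your argument is correct. Both your proof and the paper's rely on Lemma~\ref{lem:finite rank} as the essential ingredient, but they cash it out through different members of the equivalence (i)--(v).

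You work with criterion (v) directly: the moment identity $\sum_{i=0}^{N_j}\alpha_i c^j_{p+i}=0$ for all $p\ge 0$ is exactly the statement that the $j$-th Hankel block of $M_{\bm n}$ has rank at most $N_j$, so once that block has more than $N_j$ rows the whole matrix is singular. The paper instead uses criterion (iii): it picks any type~II polynomial $P_{\bm n-\bm e_j}$, shows via Lemma~\ref{lem:finite rank} that $\jap{P_{\bm n-\bm e_j}(x),x^{n_j-1}}_j=0$ (by replacing $x^{n_j-1}$ with the monic $x^{n_j-1-N_j}P_{N_j}(x)$), and concludes that $P_{\bm n-\bm e_j}$ already satisfies all the orthogonality relations for the index $\bm n$ while having degree $<|\bm n|$.

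These are really two faces of the same observation---your row relation is the moment-space shadow of the paper's polynomial-space statement---but your version is a bit more transparent about \emph{why} the rank collapses (it exhibits the explicit coefficients $\alpha_i$), while the paper's version connects more directly to the orthogonality picture and avoids opening the matrix.
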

\begin{proof}
    In the case $r = 1$ this follows from Lemma \ref{lem:finite rank}. In general, if $n_j > N_j$, we have
    \m{\jap{P_{\bm{n}-\bm{e}_j}(x),x^{n_j-1}}_j = \jap{P_{\bm{n}-\bm{e}_j}(x),Q(x)}_j} 
    for any degree $n_j-1$ monic polynomial $Q$, given any choice of type II polynomial $P_{\bm{n}-\bm{e}_j}$ (for the index $\bm{n}-\bm{e}_j$). If we choose $Q(x) = x^{n_j-1-N_j}P_{N_j}(x)$, where $P_{N_j}$ is the degree $N_j$ orthogonal polynomials with respect to $\mu_j$, then we get
    \m{\jap{P_{\bm{n}-\bm{e}_j}(x),x^{n_j-1}}_j = \jap{P_{N_j}(x),P_{\bm{n}-\bm{e}_j}(x)x^{n_j-1-N_j}}_j = 0}
    by Lemma \ref{lem:finite rank}. This means that $P_{\bm{n}-\bm{e}_j}$ satisfies all the orthogonality relations \eqref{eq:moprlII} for the index $\bm{n}$, but $\deg{P_{\bm{n}-\bm{e}_j}} < \abs{\bm{n}}$, so $\bm{n}$ cannot be normal. 
\end{proof}

A system $\bm{\mu}=(\mu_1,\dots,\mu_r)$ in $\mathcal{L}^r$ or in $\mathcal{M}^r$ is usually said to be perfect if every $\bbN^r$-index is normal. We modify this notion to the case when $\mu_j \in \mathcal{L}_{N_j}$ where $N_j$ may be finite. 
We write $\bm{N} = (N_1,\ldots,N_r)\in(\nat\cup\{+\infty\})^r$ and denote
$$
\N_{\bm{N}}^r:=\left\{ \bm{n}\in\bbN^r \mid 0\le n_j \le N_j \right\}.
$$

\begin{defn}\label{def:perf}
    We say that $(\mu_1,\dots,\mu_r)$ is a perfect system if all indices in $\N_{\bm{N}}^r$ are normal.
\end{defn}

One important class of perfect systems is Angelesco systems. Usually they are defined for measures with infinite support, but we extend the notion of Angelesco system to allow measures of finite support. 
If $\bm{\mu}\in\mathcal{M}^r$ we write $\Delta_j$ for the convex hull of $\supp(\mu_j)$ and $\Delta_j^{\mathrm{o}}$ for the interior of $\Delta_j$.
\begin{defn}
    An Angelesco system is a $\mathcal{M}^r$-system such that $\Delta_k^{\mathrm{o}}\cap\Delta_j^{\mathrm{o}} = \varnothing$ and if $\mu_k$ and $\mu_j$ are both finitely supported then we also require $\Delta_k\cap\Delta_j = \varnothing$ ($k \neq j$). 
\end{defn}

With this definition one can show that any Angelesco system is perfect in the sense of Definition~\ref{def:perf}. The proof goes along the same lines (counting real zeros of odd multiplicity of $P_{\bm{n}}$) as the standard argument for the case $\bm{N}=(\infty,\ldots,\infty)$, see e.g. ~\cite[Thm 23.1.3]{Ismail}.

\subsection{MOPRL: nearest neighbour recurrence relations}
\label{ss:NNRR}
\hfill\\

We remind the reader that whenever some index is normal then the type II multiple orthogonal polynomial at that location is always taken to be monic, and the type I polynomial is taken with the normalization~\eqref{eq:typeInorm}.

It was shown by Van Assche~\cite{NNR,Ismail} that multiple orthogonal polynomials of type II and type I satisfy the following set of equations, called the nearest neighbour recurrence relations (NNRR for short). Although it was orinigally stated in $\mathcal{M}_\infty^r$, the proof is no different in $\mathcal{L}^r$. 

\begin{thm} \label{thm:nnr} {\normalfont{\textbf{[Van Assche, \cite{NNR}]}}}
Let $\bm{\mu}\in\mathcal{L}^r$. If $\bm{n}$ and $\bm{n} + \bm{e}_j$ are normal, then 
\nm{eq:one nnr}{xP_{\bm{n}}(x) = P_{\bm{n} + \bm{e}_j}(x) + b_{\bm{n},j}P_{\bm{n}}(x) + \sum_{i = 1}^{r}a_{\bm{n},i}P_{\bm{n} - \bm{e}_i}(x)}
for some constants $a_{\bm{n},1},\dots,a_{\bm{n},r}$ and $b_{\bm{n},j}$. Here $a_{\bm{n},l} := 0$ 
whenever $n_l = 0$. 
\end{thm}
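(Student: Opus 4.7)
My approach is to construct $Q(x) := xP_{\bm{n}}(x) - P_{\bm{n}+\bm{e}_j}(x) - \sum_{i=1}^r a_{\bm{n},i} P_{\bm{n}-\bm{e}_i}(x)$ for explicit choices of $a_{\bm{n},i}$ and then use the normality of $\bm{n}$ (specifically characterization (iii)) to force $Q$ to be a scalar multiple of $P_{\bm{n}}$, which provides $b_{\bm{n},j}$.

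\medskip

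\textbf{Step 1 (degree count).} Both $xP_{\bm{n}}$ and $P_{\bm{n}+\bm{e}_j}$ are monic of degree $|\bm{n}|+1$, so $R(x) := xP_{\bm{n}}(x) - P_{\bm{n}+\bm{e}_j}(x)$ has degree at most $|\bm{n}|$.

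\medskip

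\textbf{Step 2 (partial orthogonality of $R$).} For each $i$ and each $p \leq n_i - 2$, the relation \eqref{eq:bilinear form property} gives $\jap{xP_{\bm{n}},x^p}_i = \jap{P_{\bm{n}},x^{p+1}}_i = 0$. Since $(\bm{n}+\bm{e}_j)_i \geq n_i$, we also have $\jap{P_{\bm{n}+\bm{e}_j},x^p}_i = 0$ for all $p \leq n_i - 2$. Hence $\jap{R,x^p}_i = 0$ for $p \leq n_i-2$. At the critical level $p = n_i - 1$ (when $n_i \geq 1$) a direct computation gives $\jap{R,x^{n_i-1}}_i = \jap{P_{\bm{n}},x^{n_i}}_i$, which need not vanish.

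\medskip

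\textbf{Step 3 (choice of the $a_{\bm{n},i}$).} For each $i$ with $n_i \geq 1$, set
\[
a_{\bm{n},i} := \frac{\jap{P_{\bm{n}},x^{n_i}}_i}{\jap{P_{\bm{n}-\bm{e}_i},x^{n_i-1}}_i},
\]
and $a_{\bm{n},i}:=0$ otherwise. A subtle point here is that the denominator is nonzero: if $\jap{P_{\bm{n}-\bm{e}_i},x^{n_i-1}}_i = 0$, then $P_{\bm{n}-\bm{e}_i}$ would be a nonzero polynomial of degree $< |\bm{n}|$ satisfying \eqref{eq:moprlII} at index $\bm{n}$, contradicting characterization (iii) of the normality of $\bm{n}$. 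This is the main technical step.

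\medskip

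\textbf{Step 4 (cancellation).} Define $Q(x) := R(x) - \sum_{i=1}^r a_{\bm{n},i}P_{\bm{n}-\bm{e}_i}(x)$, still of degree at most $|\bm{n}|$. For each $i$ and each $p \leq n_i - 1$, I check $\jap{Q,x^p}_i = 0$: the contribution of $P_{\bm{n}-\bm{e}_k}$ with $k \neq i$ vanishes since $(\bm{n}-\bm{e}_k)_i = n_i$, and the contribution of $P_{\bm{n}-\bm{e}_i}$ vanishes for $p \leq n_i - 2$ while at $p = n_i - 1$ it precisely kills $\jap{R,x^{n_i-1}}_i$ by the choice of $a_{\bm{n},i}$. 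Hence $Q$ is a type II multiple orthogonal polynomial at index $\bm{n}$ (or zero).

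\medskip

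\textbf{Step 5 (conclusion via normality).} By characterization (iii) of the normality of $\bm{n}$, every nonzero solution of \eqref{eq:moprlII} at $\bm{n}$ has degree exactly $|\bm{n}|$, and by (i) such a monic solution is unique. Therefore $Q = b_{\bm{n},j}\,P_{\bm{n}}$ for some scalar $b_{\bm{n},j} \in \bbC$ (possibly zero), which yields \eqref{eq:one nnr}. The convention $a_{\bm{n},l}=0$ when $n_l=0$ is built into Step 3.
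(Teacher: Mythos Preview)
Your proof is correct and is essentially the standard argument from \cite{NNR,Ismail} that the paper cites without reproducing; the paper merely notes that Van Assche's original proof goes through unchanged in $\mathcal{L}^r$. Your Step~3 observation that $\jap{P_{\bm{n}-\bm{e}_i},x^{n_i-1}}_i\neq 0$ for \emph{every} choice of $P_{\bm{n}-\bm{e}_i}$ (not just for normal $\bm{n}-\bm{e}_i$) is exactly the content of Lemma~\ref{lem:small normality lemma 2} in the paper, so this step is on solid ground even when $\bm{n}-\bm{e}_i$ fails to be normal.
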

\begin{rem}\label{rem:NNRcoef1} 
The recurrence coefficient $b_{\bm{n},j}$ is defined when both $\bm{n}$ and $\bm{n} + \bm{e}_j$ are normal, and it is always unique, given our choice of normalization for the polynomials.
The coefficient $a_{\bm{n},i}$ is defined when $\bm{n}$ is normal and $\bm{n} + \bm{e}_l$ is normal for some $l$. $a_{\bm{n},i}$ is unique if $\bm{n} - \bm{e}_i$ is also normal.  Indeed,~\eqref{eq:one nnr} implies
\begin{equation}\label{eq:a}
a_{\bm{n},i} = \frac{\jap{P_{\bm{n}}(x),x^{n_i}}_i}{\jap{P_{\bm{n}-\bm{e}_i}(x),x^{n_i-1}}_i}
\end{equation}
if $n_i>0$. Therefore the constants $a_{\bm{n},i}$ are independent of the choice of $j$ in \eqref{eq:one nnr} (i.e., if $\bm{n}+\bm{e}_j$ are normal for several choices of $j$), but may depend on the choice of $P_{\bm{n}-\bm{e}_1},\dots,P_{\bm{n}-\bm{e}_r}$ if any of the indices $\bm{n}-\bm{e}_1,\ldots,\bm{n}-\bm{e}_r$ are not normal. 
\end{rem}
\begin{rem}\label{rem:perfectAB}
    In particular, for perfect systems $\bm{\mu}\in\mathcal{L}^r$ with $N_j\le \infty$, we have that $a_{\bm{n},j}$ is defined for all $1\le j\le r$ and all $\bm{n}\in\bbN^r_{\bm{N}}$ except for $\bm{n}=\bm{N}$. This exception is only relevant if $N_j<\infty$ for all $j$, of course. Let us adopt the convention $a_{\bm{N},j}:=0$ for that case, for reasons that will become more clear later on in the paper. Assuming this, all perfect systems then have well-defined NNRR coefficiens $\{a_{\bm{n},j}\}_{\bm{n}\in\bbN^r_{\bm{N}}}$  and $\{b_{\bm{n},j}\}_{\bm{n}\in\bbN^r_{\bm{N}-\bm{e}_j}}$ for each $j$. 
\end{rem}


It is well-known that the type I polynomials satisfy similar recurrence relations. Since the explicit proof is hard to locate we provide a proof in Appendix~\ref{ss:appendix}, using minimal normality assumptions.

\begin{thm}\label{thm:nnrTypeI}
Let $\bm{\mu}\in\mathcal{L}^r$. If $\bm{n}$ and $\bm{n} - \bm{e}_j$ are normal, then 
\nm{eq:type 1 recurrence relation}{x\bm{A}_{\bm{n}}(x) = \bm{A}_{\bm{n} - \bm{e}_j}(x) + b_{\bm{n}-\bm{e}_j,j}\bm{A}_{\bm{n}}(x) + \sum_{i = 1}^{r}c_{\bm{n},i}\bm{A}_{\bm{n} + \bm{e}_i}(x),}
for some constants $c_{\bm{n},1},\dots,c_{\bm{n},r}$. 
If $\bm{n} + \bm{e}_i$ and $\bm{n} - \bm{e}_i$ are normal then $c_{\bm{n},i} = a_{\bm{n},i}$.
\end{thm}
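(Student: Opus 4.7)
The plan is to adapt the classical argument for the type II NNRR to the type I setting. First I would set $\bm{W} := x\bm{A}_{\bm{n}} - \bm{A}_{\bm{n}-\bm{e}_j}$ and verify that $\deg W^{(k)} \le n_k$ for each $k$ and $\sum_k \langle W^{(k)}, x^p\rangle_k = 0$ for $0 \le p \le |\bm{n}|-2$. The orthogonality in degrees $p \le |\bm{n}|-3$ follows immediately from $\sum_k \langle xA_{\bm{n}}^{(k)}, x^p\rangle_k = \sum_k \langle A_{\bm{n}}^{(k)}, x^{p+1}\rangle_k$ combined with the type I orthogonality of $\bm{A}_{\bm{n}-\bm{e}_j}$; the extra cancellation at $p=|\bm{n}|-2$ is exactly the matching of the two normalizations $\sum_k \langle A_{\bm{n}}^{(k)}, x^{|\bm{n}|-1}\rangle_k = 1 = \sum_k \langle A_{\bm{n}-\bm{e}_j}^{(k)}, x^{|\bm{n}-\bm{e}_j|-1}\rangle_k$.

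Let $S$ denote the space of all vector polynomials $\bm{B}$ with $\deg B^{(k)}\le n_k$ and $\sum_k \langle B^{(k)}, x^p\rangle_k = 0$ for $p\le |\bm{n}|-2$. Counting gives $\dim S \ge (|\bm{n}|+r)-(|\bm{n}|-1)=r+1$, with equality because any non-trivial linear dependence among the $|\bm{n}|-1$ orthogonality functionals would furnish a non-zero polynomial $Q$ of degree $\le |\bm{n}|-2$ satisfying \eqref{eq:moprlII} at the multi-index $\bm{n}$, contradicting characterization (iii) of normality of $\bm{n}$. The $r+1$ vectors $\bm{A}_{\bm{n}}, \bm{A}_{\bm{n}+\bm{e}_1},\dots,\bm{A}_{\bm{n}+\bm{e}_r}$ all lie in $S$, and they are linearly independent: the functional $\phi(\bm{B}):=\sum_k \langle B^{(k)}, x^{|\bm{n}|-1}\rangle_k$ sends $\bm{A}_{\bm{n}}$ to $1$ and each $\bm{A}_{\bm{n}+\bm{e}_i}$ to $0$ (separating $\bm{A}_{\bm{n}}$ from the rest), and the leading coefficient $[x^{n_k}]A_{\bm{n}+\bm{e}_k}^{(k)}$ is non-zero---otherwise $\bm{A}_{\bm{n}+\bm{e}_k}$ would itself be a non-zero type I polynomial at index $\bm{n}$ annihilated by $\phi$, contradicting characterization (iv). Hence $\bm{W}=b'\bm{A}_{\bm{n}}+\sum_i c_{\bm{n},i}\bm{A}_{\bm{n}+\bm{e}_i}$ uniquely.

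It remains to identify the coefficients. Comparing the $x^{n_k}$-coefficient in the $k$th component of this identity gives $c_{\bm{n},k}=\alpha_k^{\bm{n}}/\alpha_k^{\bm{n}+\bm{e}_k}$, where $\alpha_k^{\bm{m}}$ denotes the leading coefficient of $A_{\bm{m}}^{(k)}$. To reach $c_{\bm{n},i}=a_{\bm{n},i}$ under the extra assumption that $\bm{n}\pm\bm{e}_i$ are normal, I use the biorthogonality $\sum_k \langle A_{\bm{m}}^{(k)}, P_{\bm{n}'}\rangle_k=1$ whenever $|\bm{m}|=|\bm{n}'|+1$ and $\bm{m}$ is normal (immediate by expanding $P_{\bm{n}'}$ in powers of $x$ and invoking type I orthogonality). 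Applied with $(\bm{m},\bm{n}')=(\bm{n}+\bm{e}_i,\bm{n})$ and $(\bm{n},\bm{n}-\bm{e}_i)$, the sum collapses to its $k=i$ term by a degree count and yields $\alpha_i^{\bm{n}+\bm{e}_i}\langle P_{\bm{n}}, x^{n_i}\rangle_i=1$ and $\alpha_i^{\bm{n}}\langle P_{\bm{n}-\bm{e}_i}, x^{n_i-1}\rangle_i=1$, so the ratio becomes $\langle P_{\bm{n}}, x^{n_i}\rangle_i/\langle P_{\bm{n}-\bm{e}_i}, x^{n_i-1}\rangle_i = a_{\bm{n},i}$ by \eqref{eq:a}. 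To pin down $b'$, I pair the recurrence with $P_{\bm{n}-\bm{e}_j}$: the term $\sum_k \langle A_{\bm{n}-\bm{e}_j}^{(k)}, P_{\bm{n}-\bm{e}_j}\rangle_k$ vanishes by matching of degree constraints and orthogonality, expanding $xP_{\bm{n}-\bm{e}_j}$ via the type II NNRR and using biorthogonality leaves only $b_{\bm{n}-\bm{e}_j,j}$ coming from $\sum_k \langle A_{\bm{n}}^{(k)}, xP_{\bm{n}-\bm{e}_j}\rangle_k$, while on the right every $\bm{A}_{\bm{n}+\bm{e}_i}$ pairs to zero (as the type I orthogonality of $\bm{A}_{\bm{n}+\bm{e}_i}$ reaches degree $|\bm{n}|-1=\deg P_{\bm{n}-\bm{e}_j}$) and $\bm{A}_{\bm{n}}$ pairs to $1$ by biorthogonality, giving $b'=b_{\bm{n}-\bm{e}_j,j}$.

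The main obstacle I foresee is the bookkeeping around neighbouring indices that fail to be normal: when some $\bm{n}+\bm{e}_i$ is not normal, $\bm{A}_{\bm{n}+\bm{e}_i}$ is not uniquely determined (its solution space has dimension greater than one and the normalization \eqref{eq:typeInorm} may be unattainable), so the leading-coefficient argument for linear independence becomes delicate and the constants $c_{\bm{n},i}$ inherit the same choice-dependence already present on the type II side (Remark~\ref{rem:NNRcoef1}). A careful treatment therefore needs to specify which representatives of $\bm{A}_{\bm{n}+\bm{e}_i}$ are used and verify that $\bm{W}$ still admits the claimed decomposition with respect to any such choices.
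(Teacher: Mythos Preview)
Your proposal is correct and follows essentially the same route as the paper's proof in the Appendix: build the space of vector polynomials with the right degree bounds and orthogonality, count its dimension using normality of $\bm{n}$, exhibit $\{\bm{A}_{\bm{n}+\bm{e}_i}\}$ (together with $\bm{A}_{\bm{n}}$) as a basis via the leading-coefficient/Lemma~\ref{lem:small normality lemma 2} argument, then identify $b'$ by pairing with $P_{\bm{n}-\bm{e}_j}$ through the type~II NNRR and $c_{\bm{n},i}$ via the leading-coefficient ratio and biorthogonality. The only cosmetic difference is that the paper first subtracts $d_{\bm{n},j}\bm{A}_{\bm{n}}$ to gain one extra orthogonality relation and works in an $r$-dimensional solution space, whereas you keep $\bm{A}_{\bm{n}}$ in the basis and work in the $(r{+}1)$-dimensional space directly; the remaining steps match one-to-one.
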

\begin{rem}
    As above, $c_{\bm{n},i}$ are independent of the choice of $j$, but may depend on the choice of the vectors $(A_{\bm{n}+\bm{e}_i,1},\dots,A_{\bm{n}+\bm{e}_i,r})$ when $\bm{n} + \bm{e}_i$ is not normal (for the details, see the Appendix).
\end{rem}

By comparing~\eqref{eq:one nnr} for two different $j$, and doing the same for~\eqref{eq:type 1 recurrence relation}, 
we obtain the following. 

\begin{cor} \label{cor:nnr cor}
Assume $\bm{n}$, $\bm{n} + \bm{e}_j$ and $\bm{n} + \bm{e}_l$ are normal. Then
\begin{equation}
\label{eq:nnr cor}
 P_{\bm{n} + \bm{e}_l} - P_{\bm{n} + \bm{e}_j} = (b_{\bm{n},j} - b_{\bm{n},l})P_{\bm{n}}.
\end{equation}
Similarly, if $\bm{n}$, $\bm{n} - \bm{e}_j$ and $\bm{n} - \bm{e}_l$ are normal, then
\begin{equation}\label{eq:nnr cor type 1}
    \bm{A}_{\bm{n} - \bm{e}_l} - \bm{A}_{\bm{n} - \bm{e}_j} 
    = (b_{\bm{n}-\bm{e}_j,j} - b_{\bm{n}-\bm{e}_l,l})\bm{A}_{\bm{n}}
\end{equation}
\end{cor}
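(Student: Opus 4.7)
The proof plan is essentially a direct subtraction argument, mirroring the derivation of \eqref{eq0:CCPoly} from \eqref{eq0:NNRR1}--\eqref{eq0:NNRR2} in the introduction, but now carried out for general $r$ and for type I as well.

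For the type II statement, I would invoke Theorem~\ref{thm:nnr} twice at the same base multi-index $\bm{n}$: once with the choice $j$ (using normality of $\bm{n}+\bm{e}_j$) to get $xP_{\bm{n}} = P_{\bm{n}+\bm{e}_j} + b_{\bm{n},j}P_{\bm{n}} + \sum_i a_{\bm{n},i}P_{\bm{n}-\bm{e}_i}$, and once with the choice $l$ (using normality of $\bm{n}+\bm{e}_l$) to get the analogous identity with $j$ replaced by $l$. Subtracting the two expressions for $xP_{\bm{n}}$ cancels the left-hand side and the $\sum_i a_{\bm{n},i}P_{\bm{n}-\bm{e}_i}$ terms, yielding exactly \eqref{eq:nnr cor}. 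The only thing to verify is that the coefficients $a_{\bm{n},i}$ really coincide in both equations; this is precisely the content of Remark~\ref{rem:NNRcoef1}, which asserts that the $a_{\bm{n},i}$ are independent of the choice of $j$ in \eqref{eq:one nnr} (the potential dependence there is only on the choice of the polynomials $P_{\bm{n}-\bm{e}_i}$ at non-normal indices, and those are fixed once chosen).

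The type I statement is proved the same way, invoking Theorem~\ref{thm:nnrTypeI} twice at the base index $\bm{n}$ with choices $j$ and $l$ (both permitted by the assumed normality of $\bm{n}-\bm{e}_j$ and $\bm{n}-\bm{e}_l$), then subtracting. The common terms $b_{\bm{n}-\bm{e}_j,j}\bm{A}_{\bm{n}}$ do not cancel directly since the diagonal constants differ, but the term $\sum_i c_{\bm{n},i}\bm{A}_{\bm{n}+\bm{e}_i}$ cancels provided the $c_{\bm{n},i}$ match in the two equations, which is guaranteed by the remark after Theorem~\ref{thm:nnrTypeI}. One is left with $0 = \bm{A}_{\bm{n}-\bm{e}_j}-\bm{A}_{\bm{n}-\bm{e}_l} + (b_{\bm{n}-\bm{e}_j,j}-b_{\bm{n}-\bm{e}_l,l})\bm{A}_{\bm{n}}$, which is \eqref{eq:nnr cor type 1}.

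I do not anticipate any real obstacle. The one bookkeeping subtlety is the consistency of the $a$'s (resp.\ $c$'s) across the two recurrences, but this is settled by the preceding remarks, and both cases reduce to a single line of algebra once that is acknowledged.
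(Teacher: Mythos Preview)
Your proposal is correct and is exactly the paper's approach: the paper states just before the corollary that it follows ``by comparing~\eqref{eq:one nnr} for two different $j$, and doing the same for~\eqref{eq:type 1 recurrence relation}.'' Your attention to the consistency of the $a_{\bm{n},i}$'s (resp.\ $c_{\bm{n},i}$'s) via the remarks is appropriate and slightly more explicit than the paper itself.
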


Van Assche showed in \cite{NNR} that (assuming sufficient normality conditions) the recurrence coefficients satisfy the partial difference equations in the theorem below. Their proof works for $\bm{\mu} \in\mathcal{L}^r$ without any changes. When we write $\bm{m} \leq \bm{n}$ we mean $m_j \leq n_j$ for each $j = 1,\dots,r$.

\begin{thm} \label{thm:compatibility conditions} \normalfont{\textbf{[Van Assche, \cite{NNR}]}} Let $\bm{\mu}\in\mathcal{L}^r$. Assume all indices $\bm{m} \leq \bm{n}+\bm{e}_j + \bm{e}_l$ is normal. If $j \neq l$ then
\begin{align}
\label{eq:CC1}
    & b_{\bm{n} + \bm{e}_l,j} - b_{\bm{n} + \bm{e}_j,l} = b_{\bm{n},j} - b_{\bm{n},l},\\
    \label{eq:CC2}
    & b_{\bm{n},l}b_{\bm{n} + \bm{e}_l,j} - b_{\bm{n},j}b_{\bm{n} + \bm{e}_j,l} = \sum_{i=1}^r a_{\bm{n} + \bm{e}_l,i} - \sum_{i=1}^r a_{\bm{n} + \bm{e}_j,i}, \\
    \label{eq:CC3}
    & a_{\bm{n} + \bm{e}_l,j}\brkt{b_{\bm{n} - \bm{e}_j,j} - b_{\bm{n} - \bm{e}_j,l}} = a_{\bm{n},j}\brkt{b_{\bm{n},j} - b_{\bm{n},l}}.
\end{align}
If $n_j = 0$ or $n_l=0$ then we only get the first two equations.
\end{thm}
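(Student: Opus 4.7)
The plan is to expand $x^2 P_{\bm{n}}$ in two ways using Theorem~\ref{thm:nnr} and equate. First, starting from the recurrence in direction $j$, I would multiply $x P_{\bm{n}} = P_{\bm{n}+\bm{e}_j} + b_{\bm{n},j}P_{\bm{n}} + \sum_i a_{\bm{n},i}P_{\bm{n}-\bm{e}_i}$ by $x$ and apply Theorem~\ref{thm:nnr} a second time: to $xP_{\bm{n}+\bm{e}_j}$ in direction $l$ (producing the top-degree polynomial $P_{\bm{n}+\bm{e}_j+\bm{e}_l}$), and to $xP_{\bm{n}}$ and each $xP_{\bm{n}-\bm{e}_i}$ also in direction $l$. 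Swapping $j$ and $l$ throughout gives a second expansion. All multi-indices involved lie in $\{\bm{m} : \bm{m} \le \bm{n}+\bm{e}_j+\bm{e}_l\}$ and are therefore normal by hypothesis, so every recurrence coefficient used is well defined and each $P_{\bm{m}}$ has exact degree $|\bm{m}|$.

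After subtracting the two expansions the $P_{\bm{n}+\bm{e}_j+\bm{e}_l}$ terms cancel and one is left with a polynomial identity of degree at most $|\bm{n}|+1$, supported on $P_{\bm{m}}$'s with $\bm{m}$ in a small neighbourhood of $\bm{n}$. Next I would use Corollary~\ref{cor:nnr cor}, namely $P_{\bm{n}+\bm{e}_l} - P_{\bm{n}+\bm{e}_j} = (b_{\bm{n},j}-b_{\bm{n},l})P_{\bm{n}}$, to eliminate $P_{\bm{n}+\bm{e}_l}$ from the identity, leaving $P_{\bm{n}+\bm{e}_j}$ as the unique polynomial of degree $|\bm{n}|+1$. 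Matching its coefficient to zero yields \eqref{eq:CC1} immediately.

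To extract~\eqref{eq:CC2} and~\eqref{eq:CC3} one must separate coefficients of polynomials of equal total degree, e.g.\ the various $P_{\bm{n}+\bm{e}_j-\bm{e}_i}$ versus $P_{\bm{n}+\bm{e}_l-\bm{e}_i}$ at degree $|\bm{n}|$. I would do this by pairing the identity with the bilinear forms $\jap{\cdot, x^p}_k$ for suitably chosen $(k,p)$ and invoking the orthogonality characterization $a_{\bm{n},i} = \jap{P_{\bm{n}},x^{n_i}}_i / \jap{P_{\bm{n}-\bm{e}_i},x^{n_i-1}}_i$ from Remark~\ref{rem:NNRcoef1}. Summing the resulting $r$ equalities and re-collecting produces~\eqref{eq:CC2}; the pairing with $\jap{\cdot, x^{n_j-1}}_j$, which requires $n_j \ge 1$ for $P_{\bm{n}-\bm{e}_j}$ to be defined and for the denominator $\jap{P_{\bm{n}-\bm{e}_j},x^{n_j-1}}_j$ to be nonzero, gives~\eqref{eq:CC3}. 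This is exactly why the final sentence of the statement drops \eqref{eq:CC3} when $n_j = 0$.

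The main difficulty will be the bookkeeping in this last step: polynomials of the same total degree but different multi-indices are not comparable by leading-coefficient matching, and the $P_{\bm{n}-\bm{e}_i}$-contributions produced by the two orderings of the two-step recurrence must be tracked carefully (including the cross-terms coming from applying Theorem~\ref{thm:nnr} to $xP_{\bm{n}-\bm{e}_i}$). The assumption that every $\bm{m}\le \bm{n}+\bm{e}_j+\bm{e}_l$ is normal is used in two distinct ways—to guarantee that each $P_{\bm{m}}$ has the expected degree $|\bm{m}|$, and to ensure that the denominators appearing in the orthogonality formulas for the $a$-coefficients are nonzero—so the pairings unambiguously isolate each compatibility condition.
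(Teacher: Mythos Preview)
Your proposal is correct and follows essentially the same route as Van Assche's proof in~\cite{NNR}, which the paper cites without reproducing: both expand $x^2P_{\bm{n}}$ via the NNRR in the two orders $j\to l$ and $l\to j$ (equivalently, compare two paths to $P_{\bm{n}+\bm{e}_j+\bm{e}_l}$), then use Corollary~\ref{cor:nnr cor} to collapse polynomials of equal total degree and read off \eqref{eq:CC1}--\eqref{eq:CC3} from the respective coefficients. Your remark that \eqref{eq:CC3} requires $n_j\ge 1$ because $P_{\bm{n}-\bm{e}_j}$ must exist and $\jap{P_{\bm{n}-\bm{e}_j},x^{n_j-1}}_j\ne 0$ is exactly the reason the last line of the statement is there.
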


We can use \eqref{eq:CC1} to rewrite the left hand side of \eqref{eq:CC2} as
\m{b_{\bm{n},l}b_{\bm{n} + \bm{e}_l,j} - b_{\bm{n},j}b_{\bm{n} + \bm{e}_j,l} & = b_{\bm{n},l}b_{\bm{n} + \bm{e}_l,j} - b_{\bm{n},j}(b_{\bm{n} + \bm{e}_l,j}+ b_{\bm{n},l} - b_{\bm{n},j}) \\ & = (b_{\bm{n},l} - b_{\bm{n},j})b_{\bm{n} + \bm{e}_l,j} - (b_{\bm{n},l} - b_{\bm{n},j})b_{\bm{n},j}.}
We write $\delta_{\bm{n},j,l}$ for $b_{\bm{n},l} - b_{\bm{n},j}$. Now we can rewrite \eqref{eq:CC1}--\eqref{eq:CC3} in the alternative form,
\begin{align}
    \label{eq:CC1.2}
    & b_{\bm{n} + \bm{e}_l,j} - \delta_{\bm{n} + \bm{e}_j,j,l} = b_{\bm{n}+\bm{e}_j,j} - \delta_{\bm{n},j,l},\\
    \label{eq:CC2.2}
    & \sum_{i=1}^r a_{\bm{n} + \bm{e}_l,i} - \delta_{\bm{n},j,l}b_{\bm{n}+\bm{e}_l,j} = \sum_{i=1}^r a_{\bm{n} + \bm{e}_j,i} - \delta_{\bm{n},j,l}b_{\bm{n},j}, \\
    \label{eq:CC3.2}
    & \delta_{\bm{n}-\bm{e}_j,j,l}a_{\bm{n} + \bm{e}_l,j} = \delta_{\bm{n},j,l}a_{\bm{n},j},
\end{align}
which will be better adapted for our purposes. 

{Consider a sequence of normal indices $\set{\bm{n}_k}_{k = 0}^{\infty}$ that forms a path starting at the origin, i.e., $\bm{n}_0 = \bm{0}$ and $\bm{n}_{k+1} = \bm{n}_k + \bm{e}_{j_k}$ for some $j_k \in \set{1,\dots,r}$, $k \in \bbN$. 
Define the generalized Christoffel--Darboux kernel $\bm{K}_{\bm{n}}(x,y) = (K_{\bm{n}}^{(1)}(x,y),\dots,K_{\bm{n}}^{(r)}(x,y))$  via
\begin{equation}\label{eq:cd kernel moprl}
    \bm{K}_{\bm{n}}(x,y)
    = 
    \sum_{k = 0}^{N-1}P_{\bm{n}_k}(x)\bm{A}_{\bm{n}_{k+1}}(y).
\end{equation}
Using the NNRR one can show~\cite{NNR} that the following Christoffel--Darboux formula holds
\begin{equation}\label{eq:cd formula moprl}
(x-y) \bm{K}_{\bm{n}}(x,y)
= P_{\bm{n}_N}(x)\bm{A}_{\bm{n}_N}(y) - \sum_{j = 1}^{r}a_{\bm{n},j}P_{\bm{n}_N-\bm{e}_j}(x)\bm{A}_{\bm{n}_N+\bm{e}_{j}}(y),
\end{equation}
originally due to~\cite{DK04}. 
}

\subsection{Normality criteria}\label{normality critera}
\hfill\\

Here we collect a series of results that allow to establish normality of indices for certain multiple orthogonality systems $\bm{\mu}\in\mathcal{L}^r$. While basic, these results may be of interest on their own. We will use these results in Section~\ref{ss:CT type 2}  to find necessary and sufficient conditions for $(\widehat{\mu}_1,\ldots,\widehat{\mu}_r)$ to be perfect.


We start with the following simple lemma. One direction is standard, see, e.g., ~\cite[Cor 23.1.1--23.1.2]{Ismail}, while the other has appeared in~\cite{BCVA}, as well as~\cite{mopuc recurrence} (for the unit circle), see also~\cite{KVMOPUC}.


\begin{lem} \label{lem:small normality lemma 2}
$\bm{n} + \bm{e}_j$ is normal if and only if 
\nm{eq:non zero integral}{\jap{P_{\bm{n}}(x), x^{n_j}}_j \neq 0}
for every type II multiple orthogonal polynomial $P_{\bm{n}}$ corresponding to the index $\bm{n}$.

Similarly, $\bm{n} - \bm{e}_j$ is normal if and only if $A_{\bm{n}}^{(j)}$ has degree $n_j - 1$ for every vector of type I multiple orthogonal polynomials $(A_{\bm{n}}^{(1)},\dots,A_{\bm{n}}^{(r)})$.
\end{lem}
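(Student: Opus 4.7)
The plan is to reduce both statements to conditions (iii) and (iv) in the list of equivalent characterizations of normality, namely: $\bm{m}$ is normal iff every non-zero solution of \eqref{eq:moprlII} at $\bm{m}$ has degree exactly $|\bm{m}|$, iff the top-moment sum $\sum_i\jap{A^{(i)},x^{|\bm{m}|-1}}_i$ is non-zero for every non-zero type I solution. After that it is pure bookkeeping with how the degree bounds and orthogonality relations at the indices $\bm{n}$, $\bm{n}+\bm{e}_j$, and $\bm{n}-\bm{e}_j$ fit into one another, using that passing from a lower to a higher multi-index only adds relations.

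For the type II part, in the forward direction I would assume $\bm{n}+\bm{e}_j$ is normal and take any type II polynomial $P_{\bm{n}}$ for $\bm{n}$. If $\jap{P_{\bm{n}},x^{n_j}}_j=0$, then $P_{\bm{n}}$ also satisfies all the orthogonality relations \eqref{eq:moprlII} at the index $\bm{n}+\bm{e}_j$ while having $\deg P_{\bm{n}}\le |\bm{n}|<|\bm{n}+\bm{e}_j|$, contradicting (iii). For the converse, I would take any non-zero solution $Q$ of \eqref{eq:moprlII} at $\bm{n}+\bm{e}_j$; since the orthogonality relations for $\bm{n}$ form a subset, either $\deg Q=|\bm{n}|+1$ (which is what (iii) demands) or $Q$ is a legitimate non-zero type II polynomial for $\bm{n}$, and in the latter case the extra relation $\jap{Q,x^{n_j}}_j=0$ forced at $\bm{n}+\bm{e}_j$ contradicts the hypothesis.

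The type I part is structurally parallel but uses (iv). In the forward direction I suppose $\bm{n}-\bm{e}_j$ is normal and $\bm{A}_{\bm{n}}$ is a type I polynomial for $\bm{n}$ with $\deg A_{\bm{n}}^{(j)}<n_j-1$. The degree bounds at $\bm{n}-\bm{e}_j$ and the orthogonality relations up to $x^{|\bm{n}|-3}$ are inherited from $\bm{n}$, so $\bm{A}_{\bm{n}}$ is a non-zero type I polynomial at $\bm{n}-\bm{e}_j$; but the orthogonality at $\bm{n}$ already forces $\sum_i\jap{A_{\bm{n}}^{(i)},x^{|\bm{n}|-2}}_i=0$, contradicting (iv) for $\bm{n}-\bm{e}_j$. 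Conversely, any non-zero $\bm{B}$ at $\bm{n}-\bm{e}_j$ with $\sum_i\jap{B^{(i)},x^{|\bm{n}|-2}}_i=0$ automatically satisfies the full orthogonality at $\bm{n}$ and, being a type I vector at $\bm{n}-\bm{e}_j$, has $\deg B^{(j)}\le n_j-2<n_j-1$, contradicting the hypothesis.

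I do not anticipate a serious obstacle here; the argument is elementary once one has the equivalent characterizations of normality in hand. The only delicate point is the non-vanishing bookkeeping in the type I reverse step: the vector $\bm{B}$ remains non-zero as a vector even if $B^{(j)}$ is forced to have abnormally low degree, so the contradiction is with the hypothesis that \emph{every} type I polynomial at $\bm{n}$ has $j$-th component of full degree $n_j-1$, rather than with $B^{(j)}$ being identically zero.
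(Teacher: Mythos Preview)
Your argument is correct and essentially identical to the paper's own proof: both directions of each part proceed by showing that a would-be counterexample at one index is automatically a type II (resp.\ type I) polynomial at the neighbouring index that violates characterization (iii) (resp.\ (iv)) of normality. The only cosmetic difference is that you invoke (iii) and (iv) explicitly, whereas the paper spells out the contrapositive a bit more directly.
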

\begin{proof}
If $\jap{P_{\bm{n}}(x), x^{n_j}}_j $ is zero for some multiple orthogonal polynomial $P_{\bm{n}}$, then $P_{\bm{n}}$ satisfies every orthogonality condition for the index $\bm{n} + \bm{e}_j$, but $P_{\bm{n}}$ has degree $\leq \abs{\bm{n}}$, so $\bm{n} + \bm{e}_j$ cannot be normal. Conversely, if $\bm{n} + \bm{e}_j$ is not normal then (\ref{eq:moprlII}) has a solution $P_{\bm{n} + \bm{e}_j}$ of degree $\leq \abs{\bm{n}}$. $P_{\bm{n} + \bm{e}_j}$ satisfies the orthogonality conditions (\ref{eq:moprlII}) for the index $\bm{n}$, so it is a multiple orthogonal polynomial for the index $\bm{n}$. In other words, $P_{\bm{n} + \bm{e}_j} = P_{\bm{n}}$ for some choice of $P_{\bm{n} + \bm{e}_j}$ and $P_{\bm{n}}$, but then (\ref{eq:moprlII}) for the index $\bm{n} + \bm{e}_j$ implies
$\jap{P_{\bm{n}}(x),x^{n_j}}_j = 0$.

If there is some $A_{\bm{n}}^{(j)}$ of degree $< n_j - 1$ then we have a non-zero solution to the system~\eqref{eq:moprlI} for the index $\bm{n} - \bm{e}_j$, so $\bm{n} - \bm{e}_j$ is not normal since normality condition (iv) is not satisfied. Conversely, if $\bm{n} - \bm{e}_j$ is not normal then there is a non-zero solution $(A_{\bm{n}-\bm{e}_j}^{(1)},\dots,A_{\bm{n}-\bm{e}_j}^{(r)})$ to
\m{\sum_{i = 1}^r \jap{A_{\bm{n}-\bm{e}_j}^{(i)}(x), x^p}_i = 0
,\qquad p = 0,1,\dots,\abs{\bm{n}}-2,}
but then this is also a solution for the index $\bm{n}$, which means that there is an $A_{\bm{n},j}$ of degree $< n_j - 1$.
\end{proof}

In order to define the recurrence coefficients we required some indices to be normal. 
In turn, the recurrence coefficients can give us information about the normality of neighbouring indices, as we show next.

\begin{lem} \label{lem:normality vs recurrence} The following holds true for the recurrence coefficients of a system $\mu \in \mathcal{L}^r$. 
\begin{enumerate} [label=\alph*)] 
    \item Let $\bm{n}$ and $\bm{n} + \bm{e}_i$ be normal for some $i$ and $n_j>0$. Then $a_{\bm{n},j} \neq 0$ if and only if 
    $\bm{n} + \bm{e}_j$ is normal.
    \item Let $\bm{n}$, $\bm{n} + \bm{e}_j$ and $\bm{n} + \bm{e}_l$ be normal for some $j\ne l$. Then $b_{\bm{n},j} \neq b_{\bm{n},l}$ if and only if 
    $\bm{n} + \bm{e}_j + \bm{e}_l$ is normal. 
\end{enumerate}
\end{lem}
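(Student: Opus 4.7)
The plan is to reduce both statements to the normality criterion from Lemma~\ref{lem:small normality lemma 2}, which says that $\bm{n}+\bm{e}_k$ is normal if and only if $\jap{P_{\bm{n}}(x),x^{n_k}}_k \neq 0$ for every choice of type~II polynomial at $\bm{n}$. In each part I would derive a single scalar identity that connects the recurrence coefficient in question to the critical inner product appearing in that criterion.

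For part~(a), my strategy is to pair the nearest-neighbour recurrence \eqref{eq:one nnr} with $x^{n_j-1}$ in the $\mu_j$ bilinear form. Using \eqref{eq:bilinear form property} the left side becomes $\jap{P_{\bm{n}}(x),x^{n_j}}_j$. On the right side the terms $\jap{P_{\bm{n}+\bm{e}_i}(x),x^{n_j-1}}_j$ and $\jap{P_{\bm{n}}(x),x^{n_j-1}}_j$ vanish by orthogonality (since both $\bm{n}+\bm{e}_i$ and $\bm{n}$ have $j$-th component at least $n_j$), as do all summands $a_{\bm{n},k}\jap{P_{\bm{n}-\bm{e}_k}(x),x^{n_j-1}}_j$ with $k\neq j$. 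What survives is the clean identity
\[
\jap{P_{\bm{n}}(x),x^{n_j}}_j = a_{\bm{n},j}\,\jap{P_{\bm{n}-\bm{e}_j}(x),x^{n_j-1}}_j.
\]
Since $\bm{n}$ is assumed normal, Lemma~\ref{lem:small normality lemma 2} (with $\bm{n}$ replaced by $\bm{n}-\bm{e}_j$) guarantees that the factor on the right is non-zero for every choice of $P_{\bm{n}-\bm{e}_j}$. Thus $a_{\bm{n},j}\neq 0$ is equivalent to $\jap{P_{\bm{n}}(x),x^{n_j}}_j \neq 0$, which by Lemma~\ref{lem:small normality lemma 2} is equivalent to normality of $\bm{n}+\bm{e}_j$.

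For part~(b), I would exploit Corollary~\ref{cor:nnr cor}, which under the stated hypotheses gives
\[
P_{\bm{n}+\bm{e}_l}(x) - P_{\bm{n}+\bm{e}_j}(x) = (b_{\bm{n},j}-b_{\bm{n},l})\,P_{\bm{n}}(x).
\]
Pairing this with $x^{n_l}$ in $\mu_l$, the left-hand contribution of $P_{\bm{n}+\bm{e}_l}$ vanishes by its orthogonality against $x^{n_l}$ (as $(\bm{n}+\bm{e}_l)_l = n_l+1$), leaving
\[
\jap{P_{\bm{n}+\bm{e}_j}(x),x^{n_l}}_l = -(b_{\bm{n},j}-b_{\bm{n},l})\,\jap{P_{\bm{n}}(x),x^{n_l}}_l.
\]
Normality of $\bm{n}+\bm{e}_l$ forces $\jap{P_{\bm{n}}(x),x^{n_l}}_l\neq 0$ via Lemma~\ref{lem:small normality lemma 2}, so the right-hand side is non-zero precisely when $b_{\bm{n},j}\neq b_{\bm{n},l}$. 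Applying Lemma~\ref{lem:small normality lemma 2} once more (to $\bm{n}+\bm{e}_j$ in direction $l$) identifies the left-hand side being non-zero with normality of $\bm{n}+\bm{e}_j+\bm{e}_l$, closing the equivalence.

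I do not anticipate a genuine obstacle: the argument is essentially bookkeeping of orthogonality relations, and the only subtlety is ensuring every inner product that is claimed to vanish actually does so (which requires checking that the multi-index of the polynomial being tested has large enough $l$-th or $j$-th component). The lemma's formulation with ``for every choice'' of $P_{\bm{n}-\bm{e}_j}$ or $P_{\bm{n}+\bm{e}_j}$ handles the cases where these neighbouring indices need not be normal themselves, so the argument does not require extra normality hypotheses beyond those in the statement.
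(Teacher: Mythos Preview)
Your proposal is correct and matches the paper's proof essentially line for line: part~(a) is exactly the identity \eqref{eq:a} combined with Lemma~\ref{lem:small normality lemma 2}, and part~(b) is the paper's equation \eqref{eq:2nd integral expression} with the roles of $j$ and $l$ swapped. The only difference is cosmetic---you re-derive \eqref{eq:a} by pairing the recurrence with $x^{n_j-1}$, whereas the paper simply cites it from Remark~\ref{rem:NNRcoef1}.
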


\begin{proof} {\it a)} follows by Lemma \ref{lem:small normality lemma 2} and \eqref{eq:a}. To show {\it b)}, take the $j$-th inner product with respect to $x^{n_j}$ in \eqref{eq:nnr cor} to get
\nm{eq:2nd integral expression}{b_{\bm{n},j} - b_{\bm{n},l} = \frac{\jap{P_{\bm{n}+\bm{e}_l}(x),x^{n_j}}_j}{\jap{ P_{\bm{n}}(x),x^{n_j}}{_j}}.}
Then {\it b)} follows from Lemma \ref{lem:small normality lemma 2}.
\end{proof}

To check perfectness of certain systems the following two results may be useful. Note that direct application of Lemma~\ref{lem:small normality lemma 2} would require to check the condition~\eqref{eq:non zero integral} for {\it every} orthogonal polynomial at every index $\bm{n}$. We show that it is enough to check it for only {\it one} choice of orthogonal polynomial at every index $\bm{n}$. This is useful since occasionally finding one such choice is easy from a Rodrigues-type formula or other arguments. One such application can be found in Section \ref{ss:CT type 2} below. The first of the two results appeared in \cite[Lemma 3.4]{BCVA}.

\begin{thm}[\cite{BCVA}] \label{thm:perfectness thm}
$\bm{\mu}\in\mathcal{L}^r$ is a perfect system if and only if there is a choice of type II multiple orthogonal polynomials $P_{\bm{n}}$ for each $\bm{n}\in\bbN^r_{\bm{N}}$, such that 
\begin{equation}\label{eq:z}{\jap{P_{\bm{n}}(x),x^{n_j}}_j \neq 0
}
\end{equation}
whenever $\bm{n} \in \bbN^r_{\bm{N}-\bm{e}_j}$.
\end{thm}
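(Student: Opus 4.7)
The plan is to prove the two directions separately and rely on Lemma~\ref{lem:small normality lemma 2}, which already equates normality of $\bm{n}+\bm{e}_j$ with the non-vanishing condition $\jap{P_{\bm{n}}(x),x^{n_j}}_j\ne 0$ for \emph{every} type II multiple orthogonal polynomial at $\bm{n}$. The point of the theorem is to show that, under the hypothesis that $\bm{n}$ is itself normal, it suffices to verify the non-vanishing for a \emph{single} chosen $P_{\bm{n}}$.

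For the ``only if'' direction, suppose $\bm{\mu}$ is perfect and fix any $\bm{n}\in\bbN^r_{\bm{N}-\bm{e}_j}$. Then $\bm{n}+\bm{e}_j$ lies in $\bbN^r_{\bm{N}}$ and is normal, so by Lemma~\ref{lem:small normality lemma 2} every choice of type II polynomial $P_{\bm{n}}$ satisfies $\jap{P_{\bm{n}}(x),x^{n_j}}_j\ne 0$. In particular, any selection of representatives works.

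For the ``if'' direction, I would proceed by induction on $|\bm{n}|$ to show that every index in $\bbN^r_{\bm{N}}$ is normal. The base case $\bm{n}=\bm{0}$ is normal by convention. For the inductive step, assume every index $\bm{m}\in\bbN^r_{\bm{N}}$ with $|\bm{m}|\le n$ is normal, and let $\bm{n}'\in\bbN^r_{\bm{N}}$ with $|\bm{n}'|=n+1$. Choose any coordinate $j$ with $n'_j\ge 1$ and set $\bm{n}:=\bm{n}'-\bm{e}_j$. Then $\bm{n}\in\bbN^r_{\bm{N}-\bm{e}_j}$ and $|\bm{n}|=n$, so by the inductive hypothesis $\bm{n}$ is normal. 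Normality of $\bm{n}$ forces the monic type II polynomial $P_{\bm{n}}$ at $\bm{n}$ to be unique of degree $|\bm{n}|$, so any type II polynomial at $\bm{n}$ is a scalar multiple of $P_{\bm{n}}$. Thus the hypothesis $\jap{P_{\bm{n}}(x),x^{n_j}}_j\ne 0$ for the chosen representative automatically upgrades to the same non-vanishing for \emph{every} type II polynomial at $\bm{n}$. Lemma~\ref{lem:small normality lemma 2} then gives normality of $\bm{n}+\bm{e}_j=\bm{n}'$, completing the induction.

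The only subtlety is the indexing bookkeeping with $\bm{N}$ possibly having finite entries: one must make sure that the chosen $j$ with $n'_j\ge 1$ automatically satisfies $n_j=n'_j-1\le N_j-1$ (which it does because $\bm{n}'\in\bbN^r_{\bm{N}}$), so that the hypothesis of the theorem indeed applies to the index $\bm{n}$. No part of the argument is deep; the entire content is translating the given one-polynomial non-vanishing into the all-polynomials non-vanishing needed by Lemma~\ref{lem:small normality lemma 2}, which is achieved for free by inductive uniqueness.
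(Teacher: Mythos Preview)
Your proof is correct and follows essentially the same approach as the paper: both directions use Lemma~\ref{lem:small normality lemma 2}, and the sufficiency is an induction on $|\bm{n}|$ in which normality of $\bm{n}$ (obtained from the inductive hypothesis) makes the type II polynomial unique up to scalar, so the one-choice non-vanishing upgrades to the all-choices version needed by the lemma. Your write-up is slightly more explicit about this uniqueness step and about the finite-$N_j$ bookkeeping, but the argument is the same.
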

\begin{proof}
Necessity of~\eqref{eq:z} is trivial from Lemma~\ref{lem:small normality lemma 2}. Let us now show sufficiency. Note that~\eqref{eq:z} is only imposed on one choice of $P_{\bm{n}}$ so this direction is not immediate from Lemma~\ref{lem:small normality lemma 2}. We use induction on $N = \abs{\bm{n}}$ to prove that $\bm{n}$ is normal whenever $0 \leq n_j \leq N_j$. The case $N = 0$ is obvious. If $N > 0$, assume $\bm{n}$ is normal whenever $\abs{\bm{n}} < N$ and take $\bm{m} = (m_1,\dots,m_r)$ such that $\abs{\bm{m}} = N$. There is some $j$ such that $\bm{m} = \bm{n} + \bm{e}_j$. $\bm{n}$ is normal by assumption, so the monic $P_{\bm{n}}$ is unique, and 
\m{\jap{P_{\bm{n}}(x),x^{n_j}}_j \neq 0,}
so $\bm{n} + \bm{e}_j$ is normal by Lemma \ref{lem:small normality lemma 2}.
\end{proof}
\begin{rem}\label{rem:perf thm}
    In fact the same proof shows that  all indices along any path $\{\bm{n}_l\}_{l=0}^m$ (where $m$ may be infinite) with $\bm{n}_0 = \bm{0}$ and $\bm{n}_{l+1}=\bm{n}_l+\bm{e}_{j_l}$, where $1\le j_l \le r$, are normal if and only if
    \begin{equation}
    \jap{P_{\bm{n}_l}(x),x^{(\bm{n}_l)_{j_l}}}_{j_l} \neq 0, \qquad l = 0,\dots,m-1,
    \end{equation}
    for a choice of type II multiple orthogonal polynomials $P_{\bm{n}_l}$.
\end{rem}


\begin{thm}\label{thm:recurrence relation gives perfect system}
$\bm{\mu}\in\mathcal{L}^r$ is a perfect system if and only if there is a choice of type II multiple orthogonal polynomials $P_{\bm{n}}$ for each $\bm{n}\in\bbN^r_{\bm{N}}$, such that 
for each $\bm{n}\in \bbN^r_{\bm{N}-\bm{e}_j-\bm{e}_l}$
\nm{eq:normality check}{P_{\bm{n} + \bm{e}_l} - P_{\bm{n} + \bm{e}_j} = c_{\bm{n},j,l}P_{\bm{n}}}
for some constants $c_{\bm{n},j,l} \neq 0$ ($j \neq l$).
\end{thm}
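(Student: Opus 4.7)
The ``only if'' direction is direct. I plan to select $P_{\bm{n}}$ to be the unique monic type II MOP at each $\bm{n}\in\bbN^r_{\bm{N}}$. Corollary~\ref{cor:nnr cor} then yields $P_{\bm{n}+\bm{e}_l}-P_{\bm{n}+\bm{e}_j}=(b_{\bm{n},j}-b_{\bm{n},l})P_{\bm{n}}$ for every $\bm{n}\in\bbN^r_{\bm{N}-\bm{e}_j-\bm{e}_l}$. Since perfectness makes $\bm{n}+\bm{e}_j+\bm{e}_l$ normal, Lemma~\ref{lem:normality vs recurrence}(b) forces $b_{\bm{n},j}\ne b_{\bm{n},l}$, so $c_{\bm{n},j,l}:=b_{\bm{n},j}-b_{\bm{n},l}$ is nonzero, as required.

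For the converse, I would induct on $M$, proving that every $\bm{m}\in\bbN^r_{\bm{N}}$ with $|\bm{m}|\le M$ is normal. The base $M=0$ is trivial. Fix $\bm{m}\in\bbN^r_{\bm{N}}$ with $|\bm{m}|=M+1$ and split on the support of $\bm{m}$. If $\bm{m}=k\bm{e}_j$ is supported on a single index, the Gram matrix~\eqref{eq:moprl matrix} reduces to the single-measure Hankel block~\eqref{eq:matrix} for $\mu_j$ of size $k\le N_j$, which is invertible because $\mu_j\in\mathcal{L}_{N_j}$; hence $\bm{m}$ is normal. Otherwise $\bm{m}$ has at least two positive coordinates, and I can pick $j\ne l$ with $m_j,m_l\ge 1$ and write $\bm{m}=\bm{n}+\bm{e}_j+\bm{e}_l$ with $\bm{n}\in\bbN^r_{\bm{N}-\bm{e}_j-\bm{e}_l}$ and $|\bm{n}|=M-1$. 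By the inductive hypothesis the indices $\bm{n}$, $\bm{n}+\bm{e}_j$, $\bm{n}+\bm{e}_l$ are all normal.

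The plan here is to combine the assumed relation $P_{\bm{n}+\bm{e}_l}-P_{\bm{n}+\bm{e}_j}=c_{\bm{n},j,l}P_{\bm{n}}$ with the identity from Corollary~\ref{cor:nnr cor} written for the unique monic MOPs at the three (now normal) indices. Writing each chosen $P$ as a nonzero scalar multiple of its monic representative and matching the degree $|\bm{n}|+1$ coefficients forces the scalars on $P_{\bm{n}+\bm{e}_j}$ and $P_{\bm{n}+\bm{e}_l}$ to agree; a short scalar comparison then reveals that $b_{\bm{n},j}-b_{\bm{n},l}$ equals $c_{\bm{n},j,l}$ up to a nonzero factor, so $b_{\bm{n},j}\ne b_{\bm{n},l}$, and Lemma~\ref{lem:normality vs recurrence}(b) delivers normality of $\bm{m}$. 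The only real obstacle is the scale ambiguity in the chosen $P_{\bm{n}}$'s — the hypothesis does not impose any normalization — but at each normal index this ambiguity is a single nonzero scalar that factors cleanly through the leading-coefficient comparison, so it plays no essential role beyond bookkeeping.
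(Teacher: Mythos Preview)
Your proposal is correct and follows essentially the same route as the paper: induction on $|\bm{m}|$, disposing of the axis indices via $\mu_j\in\mathcal{L}_{N_j}$, and for off-axis $\bm{m}=\bm{n}+\bm{e}_j+\bm{e}_l$ comparing the hypothesis with Corollary~\ref{cor:nnr cor} to force $b_{\bm{n},j}\ne b_{\bm{n},l}$ and then invoking Lemma~\ref{lem:normality vs recurrence}(b). The paper simply writes $(b_{\bm{n},j}-b_{\bm{n},l})P_{\bm{n}}=P_{\bm{n}+\bm{e}_l}-P_{\bm{n}+\bm{e}_j}=c_{\bm{n},j,l}P_{\bm{n}}$ and concludes, tacitly identifying the chosen $P$'s with the monic ones; your explicit handling of the scalar ambiguity (matching leading coefficients to see $\lambda_{\bm{n}+\bm{e}_j}=\lambda_{\bm{n}+\bm{e}_l}$, then reading off $b_{\bm{n},j}-b_{\bm{n},l}$ as a nonzero multiple of $c_{\bm{n},j,l}$) is a welcome clarification of a point the paper glosses over.
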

\begin{proof}
We prove that $\bm{n}$ is normal by induction on $M = \abs{\bm{n}}$. The cases $M = 0$ and $M= 1$ are obvious. For $M > 1$, assume $\bm{n}$ is normal whenever $\abs{\bm{n}} < M$ and take $\bm{m} = (m_1,\dots,m_r)$ such that $\abs{\bm{m}} = M$. Note that we know the normality in the case $\bm{m} = m\bm{e}_i$, so assume $m_j > 0$ and $m_l > 0$ for some $j$ and $k$ with $j \neq l$. Then $\bm{n} = \bm{m} - \bm{e}_j - \bm{e}_l$ is normal, as well as $\bm{n} + \bm{e}_j = \bm{m} - \bm{e}_l$ and $\bm{n} + \bm{e}_l = \bm{m} - \bm{e}_j$. Hence by Corollary \ref{cor:nnr cor} we get
\m{(b_{\bm{n},j} - b_{\bm{n},l})P_{\bm{n}} = P_{\bm{n} + \bm{e}_l} - P_{\bm{n} + \bm{e}_j} = c_{\bm{n},j,l}P_{\bm{n}},}
so we must have $b_{\bm{n},j} - b_{\bm{n},l} = c_{\bm{n},j,l} \neq 0$, which implies that $\bm{m} = \bm{n} + \bm{e}_j + \bm{e}_l$ is normal, by Lemma \ref{lem:normality vs recurrence} b).
\end{proof}

A related result to Theorem~\ref{thm:recurrence relation gives perfect system}
can be found in \cite[Prop. 3]{ADL}.

\subsection{Appendix: Proof of Theorem \ref{thm:nnrTypeI}}\label{ss:appendix}

\begin{lem} \label{lem:nnr lem 2}
Suppose $\bm{n}$ is normal. Then the vectors $\set{(A_{\bm{n}+\bm{e}_i}^{(1)},\dots,A_{\bm{n}+\bm{e}_i}^{(r)})}_{i=1}^{r}$ are linearly independent. 
\end{lem}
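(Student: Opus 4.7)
\emph{Plan.} The proof will exploit the different degree bounds that the components $A_{\bm{n}+\bm{e}_i}^{(j)}$ satisfy depending on whether $j=i$ or $j\neq i$. The type I definition forces $\deg A_{\bm{n}+\bm{e}_i}^{(j)} \leq (n_j + \delta_{ij}) - 1$; so when $i\neq j$ the degree is strictly less than $n_j$, while when $i=j$ the degree can reach $n_j$. This asymmetry is exactly what makes the $r$ vectors ``point in different directions''.

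\emph{Main steps.} First I would suppose $\sum_{i=1}^{r}\alpha_i \bm{A}_{\bm{n}+\bm{e}_i} = \bm{0}$ and aim to show each $\alpha_i=0$. Fix any index $j\in\{1,\dots,r\}$ and read off the $j$-th coordinate:
\[
\sum_{i=1}^{r}\alpha_i A_{\bm{n}+\bm{e}_i}^{(j)}(x) \equiv 0.
\]
Next I would compare the coefficients of $x^{n_j}$ on both sides. For every $i\neq j$ the degree bound $\deg A_{\bm{n}+\bm{e}_i}^{(j)} \leq n_j-1$ makes that contribution vanish, so only the $i=j$ term survives:
\[
\alpha_j \cdot [x^{n_j}] A_{\bm{n}+\bm{e}_j}^{(j)}(x) = 0.
\]
Finally I would use Lemma \ref{lem:small normality lemma 2}: applied to the index $\bm{m} := \bm{n}+\bm{e}_j$, the hypothesis that $\bm{m}-\bm{e}_j = \bm{n}$ is normal forces $A_{\bm{m}}^{(j)} = A_{\bm{n}+\bm{e}_j}^{(j)}$ to have degree exactly $m_j - 1 = n_j$ for \emph{every} choice of type I representative. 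Hence its leading coefficient is nonzero, which gives $\alpha_j = 0$. Running this argument for each $j$ completes the proof.

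\emph{Anticipated obstacles.} There is essentially no technical obstacle here: the entire content of the lemma is the combination of the degree bound for the off-diagonal components and the leading-coefficient statement of Lemma \ref{lem:small normality lemma 2}. The only subtlety is to be careful that type I multiple orthogonal polynomials need not be unique under the bare assumption that $\bm{n}$ is normal (the indices $\bm{n}+\bm{e}_i$ themselves are not assumed normal), so one must invoke the ``for every choice'' form of Lemma \ref{lem:small normality lemma 2}, rather than any uniqueness statement. This is exactly what Lemma \ref{lem:small normality lemma 2} delivers.
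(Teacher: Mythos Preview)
Your proof is correct and follows essentially the same approach as the paper's: take a vanishing linear combination, look at the $j$-th component, and use the degree bound $\deg A_{\bm{n}+\bm{e}_i}^{(j)}<n_j$ for $i\neq j$ together with Lemma~\ref{lem:small normality lemma 2} (which guarantees $\deg A_{\bm{n}+\bm{e}_j}^{(j)}=n_j$ since $\bm{n}$ is normal) to conclude $\alpha_j=0$. Your explicit remark about invoking the ``for every choice'' form of Lemma~\ref{lem:small normality lemma 2} is a good observation, since the indices $\bm{n}+\bm{e}_i$ are not assumed normal.
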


\begin{proof}
Consider the equation
\m{\sum_{i = 1}^r c_i (A_{\bm{n}+\bm{e}_i}^{(1)},\dots,A_{\bm{n}+\bm{e}_i}^{(r)}) = 0.}
Since $\bm{n}$ is normal $A_{\bm{n}+\bm{e}_j}^{(j)}$ has degree $n_j$, by Lemma \ref{lem:small normality lemma 2}, but $A_{\bm{n}+\bm{e}_i}^{(j)}$ has degree $< n_j$ when $i \neq j$, so $c_j = 0$, which proves linear independence.
\end{proof}

\begin{rem}
    The type II equivalent of Lemma \ref{lem:nnr lem 2} is the linear independence of $P_{\bm{n}-\bm{e_1}},\dots,P_{\bm{n}-\bm{e}_r}$, which also follows from Lemma \ref{lem:small normality lemma 2} (see \cite{DK04,Ismail}).
\end{rem}

\begin{proof}[Proof of Theorem~\ref{thm:nnrTypeI}]
We have
\m{\sum_{k = 1}^r \jap{xA_{\bm{n}}^{(k)}(x) - A_{\bm{n}-\bm{e}_j}^{(k)}, x^p}_k = 0, \qquad p = 0,1,\dots,\abs{\bm{n}}-2.}
Now choose $d_{\bm{n},j}$ such that
\m{\sum_{k = 1}^r \jap{xA_{\bm{n}}^{(k)}(x) - A_{\bm{n}-\bm{e}_j}^{(k)} - d_{\bm{n},j}A_{\bm{n}}^{(k)} ,x^p}_k = 0, \qquad p = 0,1,\dots,\abs{\bm{n}}-1.}
The polynomial $B_{\bm{n}}^{(k)} = xA_{\bm{n}}^{(k)}(x) - A_{\bm{n}-\bm{e}_j}^{(k)} - d_{\bm{n},j}A_{\bm{n}}^{(k)}$ has degree $n_k$. Hence the above system of orthogonality relations is homogeneous with matrix $M_{\bm{n}}^t$ with $r$ columns added, where $M_{\bm{n}}$ is given by \eqref{eq:matrix}. This matrix has nullity $r$, since $M_{\bm{n}}^t$ has nullity $0$, by normality. Hence the solution space is of dimension $r$, and $(B_{\bm{n}}^{(1)},\dots,B_{\bm{n}}^{(r)}) = (A_{\bm{n}+\bm{e}_i}^{(1)},\dots,A_{\bm{n}+\bm{e}_i}^{(r)})$ are solutions for each $i = 1,\dots,r$. By Lemma \ref{lem:nnr lem 2} every solution is thus on the form
\m{B_{\bm{n}}^{(k)} = \sum_{i=1}^r c_{\bm{n},i}A_{\bm{n}+\bm{e}_i}^{(k)}.}
In other words we have the recurrence relation
\m{xA_{\bm{n}}^{(k)}(x) = A_{\bm{n}-\bm{e}_j}^{(k)} + d_{\bm{n},j}A_{\bm{n}}^{(k)} + \sum_{i=1}^r c_{\bm{n},i}A_{\bm{n}+\bm{e}_i}^{(k)}.}

To show that $d_{\bm{n},j} = b_{\bm{n}-\bm{e}_j,j}$, we compare with the recurrence relation
\m{xP_{\bm{n}-\bm{e}_j}(x) = P_{\bm{n}}(x) + b_{\bm{n}-\bm{e}_j,j}P_{\bm{n}-\bm{e}_j}(x) + \sum_{i = 1}^{r}a_{\bm{n}-\bm{e}_j,i}P_{\bm{n} - \bm{e}_j - \bm{e}_i}(x).}
From here we can write
\m{\sum_{k = 1}^r \jap{xA_{\bm{n}}^{(k)}(x),P_{\bm{n}-\bm{e}_j}(x)} & = \sum_{k = 1}^r \jap{A_{\bm{n}}^{(k)}(x),P_{\bm{n}}(x)} \\ & + b_{\bm{n}-\bm{e}_j,j}\sum_{k = 1}^r\jap{ A_{\bm{n}}^{(k)}(x),P_{\bm{n}-\bm{e}_j}(x)} \\ & \sum_{i = 1}^r a_{\bm{n}-\bm{e}_j,i} \sum_{k = 1}^r \jap{A_{\bm{n}}^{(k)}(x),P_{\bm{n} - \bm{e}_j - \bm{e}_i}(x)}.}
The first term vanishes by the orthogonality relations of $P_{\bm{n}}$, and the last term vanishes by the orthogonality relations of $\bm{A}_{\bm{n}}$. The second factor in the middle term is $1$, so we have
\m{\sum_{k = 1}^r \jap{xA_{\bm{n}}^{(k)}(x),P_{\bm{n}-\bm{e}_j}(x)}_k = b_{\bm{n}-\bm{e}_j,j}.}
On the other hand, if we instead apply a similar argument using the type I recurrence relation we end up with
\m{\sum_{k = 1}^r \jap{xA_{\bm{n}}^{(k)}(x),P_{\bm{n}-\bm{e}_j}(x)}_k = d_{\bm{n},j},}
so $d_{\bm{n},j} = b_{\bm{n}-\bm{e}_j,j}$.

Write $\kappa_{\bm{n},j}$ for the degree $n_j-1$ coefficient of $A_{\bm{n}}^{(j)}$, so that $A_{\bm{n}}^{(j)}(x) = \kappa_{\bm{n},j}x^{n_j-1} + o(x^{n_j-1})$. By comparing the degree $n_i$ coefficients in (\ref{eq:type 1 recurrence relation}) we see that $\kappa_{\bm{n},i} = c_{\bm{n},i}\kappa_{\bm{n}+\bm{e}_i,i}$. By Lemma \ref{lem:small normality lemma 2} we know that $\kappa_{\bm{n}+\bm{e}_i,i} \neq 0$, so we get
\m{c_{\bm{n},i} = \frac{\kappa_{\bm{n},i}}{\kappa_{\bm{n}+\bm{e}_i,i}},}
which shows independence of $j$. 

What remains is to show that $c_{\bm{n},i} = a_{\bm{n},i}$. Note that
\m{\sum_{k = 1}^r \jap{A_{\bm{n}}^{(k)}(x),P_{\bm{n}-\bm{e}_i}(x)}_k = 1,}
by the orthogonality relations of $\bm{A}_{\bm{n}}$, assuming $\bm{n}-\bm{e}_i$ is normal. By the orthogonality relations of $P_{\bm{n}-\bm{e}_i}$ we instead get 
\m{\sum_{k = 1}^r \jap{A_{\bm{n}}^{(k)}(x),P_{\bm{n}-\bm{e}_i}(x)}_k & = \jap{A_{\bm{n}}^{(i)}(x),P_{\bm{n}-\bm{e}_i}(x)}_i \\ & = \kappa_{\bm{n},i} \jap{x^{n_i-1},P_{\bm{n}-\bm{e}_i}(x)}_i.}
Similarly, if $\bm{n}+\bm{e}_i$ is normal then 
\m{\kappa_{\bm{n}+\bm{e}_i,i} \jap{x^{n_i},P_{\bm{n}}(x)} = 1,}
so we get
\m{a_{\bm{n},i} = \frac{\jap{x^{n_i},P_{\bm{n}}(x)}}{\jap{x^{n_i - 1},P_{\bm{n}-\bm{e}_i}(x)}} = \frac{\kappa_{\bm{n},i}}{\kappa_{\bm{n}+\bm{e}_i,i}} = c_{\bm{n},i},}
which completes the proof.
\end{proof}

\section{Main results}

Let us setup the following notation for the remainder of this paper. We write
\begin{align*}
    \bm{\mu} & = (\mu_1,\dots,\mu_r) \in \mathcal{L}^r, \\
    \bm{\nu} & =(\mu_1,\ldots,\mu_{r-1})\in\mathcal{L}^{r-1},
\end{align*}
that is, $\bm{\nu}$ is $\bm{\mu}$ with $\mu_r$ removed, where $r\ge 2$. 

As usual we allow $\mu_j\in\mathcal{L}_{N_j}$ with $N_j\le \infty$, and denote $\bm{N} = (N_1,\ldots,N_r)$. We use 
$\bm{n}$ for a multi-index in $\bbN^r_{\bm{N}}$, and $\bm{k}=(k_1,\ldots,k_{r-1})$ for a multi-index in $\bbN^{r-1}_{\bm{K}}$, where we denote $\bm{K} = (N_1,\ldots,N_{r-1})$.

Finally, it is clear that the type II multiple orthogonal polynomials  for $\bm{\nu}$ at a location $\bm{k}$ coincide with type II multiple orthogonal polynomials for $\bm{\mu}$ at the location $(\bm{k},0)$. 
So we use the same label $P$ 
for type II polynomials for $\bm{\mu}$ and $\bm{\nu}$ interchangeably, as it cannot lead to any ambiguity: $P_{(\bm{k},0)}=P_{\bm{k}}$, and similarly for type I polynomials $A^{(j)}_{(\bm{k},0)} = A^{(j)}_{\bm{k}}$ for $1\le j \le r-1$.

\subsection{Christoffel Transforms of Type II Polynomials}\label{ss:CT type 2} \hfill\\

Let $\mu_r\in\mathcal{L}_m$ with $m = N_r <\infty$, and write $\Phi(x) = \prod_{j=1}^m (x-z_j)$ for the unique monic orthogonal polynomial of degree $m$ with respect to $\mu_r$. 
For example, if all $z_j$'s are distinct (which is true if $\mu_r\in\mathcal{M}_m\subset\mathcal{L}_m$), then $\mu_r$ is of the form
\begin{equation}\label{eq:simpleMu}
    \sum_{j=1}^m w_j\delta_{z_j} 
\end{equation} 
where $w_j \in \C\setminus\set{0}$ for each $j = 1,\dots,m$. 
But in general $z_j$'s may overlap leading to more complicated functionals, see Remark~\ref{rem:Gus}. 



Consider the Christoffel transforms $\widehat{\mu}_j = \Phi\mu_j$ for $1 \leq j \leq r-1$, as in~\eqref{eq:ChrL}. We refer to 
$$\widehat{\bm{\nu}} = \Phi\bm{\nu} = (\widehat\mu_1,\ldots,\widehat\mu_{r-1})
$$
as the Christoffel transform of $\bm{\nu} = (\mu_1,\ldots,\mu_{r-1})$. We write $P_{\bm{k}}$ for the type II polynomials with respect to $\bm{\nu}$ and $\widehat{P}_{\bm{k}}$ for the type II polynomials with respect to $\widehat{\bm{\nu}}$, $\bm{k}\in\bbN^{r-1}_{\bm{K}}$. Let $\{a_{\bm{k},j},b_{\bm{k},j}\}$  denote the recurrence coefficients of $\bm{\nu}$, and $\{\widehat{a}_{\bm{k},j},\widehat{b}_{\bm{k},j}\}$ denote the recurrence coefficients of $\widehat{\bm{\nu}}$. The connection between the systems $\widehat{\bm{\nu}}$ and $\bm{\mu} = (\bm{\nu},\mu_r)$ is summarized by the following result.

\begin{thm}\label{thm:ct moprl}
An index $(\bm{k},m)$ is normal for the system $\bm{\mu} = (\bm{\nu},\mu_r)$ if and only if $\bm{k}$ is normal for the system $\widehat{\bm{\nu}}$. In that case we have $P_{(\bm{k},m)}(x) = \Phi(x)\widehat{P}_{\bm{k}}(x)$.
\end{thm}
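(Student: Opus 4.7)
The plan is to use Lemma~\ref{lem:finite rank} to establish a degree-shifting bijection, via multiplication by $\Phi$, between the non-zero solutions of the type II orthogonality conditions at $\bm{k}$ for $\widehat{\bm{\nu}}$ and those at $(\bm{k},m)$ for $\bm{\mu}$. The theorem will then drop out of characterization (iii) of normality from the preliminaries (that $\bm{n}$ is normal iff every non-zero type II polynomial at $\bm{n}$ has degree exactly $\abs{\bm{n}}$).

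First I would verify the easy direction: if $\widehat{P}$ is any non-zero polynomial with $\deg\widehat{P}\le\abs{\bm{k}}$ satisfying the type II conditions for $\widehat{\bm{\nu}}$ at $\bm{k}$, then $P:=\Phi\widehat{P}$ is a non-zero polynomial of degree $\le\abs{(\bm{k},m)}$ satisfying type II orthogonality for $\bm{\mu}$ at $(\bm{k},m)$. For $j<r$ and $0\le p\le k_j-1$ this is immediate from the definition~\eqref{eq:ChrL}:
\[
\jap{\Phi(x)\widehat{P}(x),x^p}_j=\mu_j[\Phi(x)\widehat{P}(x)x^p]=\widehat{\mu}_j[\widehat{P}(x)x^p]=0.
\]
For $j=r$ and $0\le p\le m-1$, the identity $\mu_r[\Phi(x)\widehat{P}(x)x^p]=0$ follows directly from~\eqref{eq:preOrthLemma} of Lemma~\ref{lem:finite rank} applied to $\widehat{P}(x)x^p$.

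Next I would prove the converse: any non-zero polynomial $P$ of degree $\le\abs{(\bm{k},m)}$ solving the type II system at $(\bm{k},m)$ factors as $P=\Phi Q$. Indeed, the orthogonality $\mu_r[P(x)x^p]=0$ for $0\le p\le m-1$ is exactly the hypothesis of~\eqref{eq:orthogLemma} of Lemma~\ref{lem:finite rank}, so $\Phi\mid P$. Writing $P=\Phi Q$, I would then reverse the previous calculation to check that $Q$ (non-zero, with $\deg Q=\deg P-m\le\abs{\bm{k}}$) satisfies the type II orthogonality for $\widehat{\bm{\nu}}$ at $\bm{k}$, using $\widehat{\mu}_j[Qx^p]=\mu_j[\Phi Q x^p]=\mu_j[Px^p]=0$ for $j<r$, $0\le p\le k_j-1$.

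These two steps show that $\widehat{P}\mapsto\Phi\widehat{P}$ is a bijection between non-zero type II polynomials at the two indices, shifting degrees by exactly $m$. Hence every non-zero type II polynomial at $(\bm{k},m)$ has degree $\abs{(\bm{k},m)}$ iff every non-zero type II polynomial at $\bm{k}$ for $\widehat{\bm{\nu}}$ has degree $\abs{\bm{k}}$, which by criterion (iii) is precisely the claimed equivalence of normality. In that case, since $\Phi$ is monic, the monic representatives correspond, giving $P_{(\bm{k},m)}=\Phi\widehat{P}_{\bm{k}}$. There is no real obstacle beyond carefully applying Lemma~\ref{lem:finite rank}; the conceptual content is simply that the $\mu_r$-orthogonality conditions imposed by the index $(\bm{k},m)$ are exactly the conditions that force $\Phi$ to divide $P_{(\bm{k},m)}$.
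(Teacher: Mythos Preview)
Your proof is correct and follows essentially the same approach as the paper: both arguments use Lemma~\ref{lem:finite rank} to show that the map $\widehat{P}\mapsto\Phi\widehat{P}$ is a degree-shifting bijection between the type~II solutions at the two indices, and then conclude via the characterization that normality means every non-zero solution has full degree. Your write-up is in fact slightly more explicit about the bijection and the invocation of criterion~(iii), but the content is the same.
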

\begin{proof}
Suppose $(\bm{k},m)$ is normal. By the orthogonality relations 
\nm{eq:ct orthogonality r}{\jap{P_{(\bm{k},m)}(x),x^p}_r = 0, \qquad p = 0,1,\dots,m-1,}
we must have $P_{(\bm{k},m)}(x) = \Phi(x)Q(x)$ for some polynomial $Q$ with $\deg{Q} \leq \abs{\bm{k}}$, by Lemma \ref{lem:finite rank}. For the other inner products we then have 
\nm{eq:ct orthogonality}{\jap{\Phi(x)Q(x),x^p}_j = 0, \qquad p = 0,1,\dots,k_j-1, \qquad j = 1,\dots,r-1.}
Hence $Q$ must be a multiple orthogonal polynomial with respect to $\widehat{\bm{\nu}}$. Conversely, any multiple orthogonal polynomial $Q = \widehat{P}_{\bm{k}}$ with respect to $\widehat{\bm{\nu}}$ satisfies \eqref{eq:ct orthogonality} by definition. \eqref{eq:ct orthogonality r} is then satisfied with $P_{(\bm{k},m)}(x)$ replaced by $\Phi(x)\widehat{P}_{\bm{k}}(x)$, by Lemma \ref{lem:finite rank}. Now, $(\bm{k},m)$ is normal for $(\bm{\nu},\mu_r)$ if and only if every $P_{(\bm{k},m)}$ has degree $\abs{\bm{k}}+m$, if and only if every $\widehat{P}_{\bm{k}}$ has degree $\abs{\bm{k}}$, if and only if $\bm{k}$ is normal for $\widehat{\bm{\nu}}$. 
\end{proof}

Now we find the determinantal formula, together with a necessary and sufficient condition on the perfectness of $\widehat{\bm{\nu}}$. 

\begin{thm}\label{thm:ChrFull}
    Suppose that $\bm{\nu} \in \mathcal{L}_{\infty}^{r-1}$ is {a} perfect {system}, and all $\{z_j\}_{j=1}^m$ are distinct. 
\begin{itemize}
\item[i)] {If $(\bm{k},m)$ is normal,
then for any sequence of $\bbN^{r-1}$-indices $\set{\bm{s}_j}_{j = 0}^m$ with $\bm{s}_0 = 0$ and $\abs{\bm{s}_j} = j$, we have $D_{\bm{k}} = \det(P_{\bm{k}+\bm{s}_{m-i}}(z_j))_{i,j=1}^m \ne 0$ and the determinantal formula}
\nm{eq:ct det}{
        \widehat{P}_{\bm{k}}(x) =
    \Phi(x)^{-1}D_{\bm{k}}^{-1}
    \det\begin{pmatrix}
P_{\bm{k} + \bm{s}_m}(x) & P_{\bm{k}  + \bm{s}_{m-1}}(x) & \cdots & P_{\bm{k} }(x) \\
P_{\bm{k} + \bm{s}_m}(z_1) & P_{\bm{k} + \bm{s}_{m-1}}(z_1) & \cdots & P_{\bm{k}}(z_1) \\
\vdots & \vdots & \ddots & \vdots \\
P_{\bm{k} + \bm{s}_m}(z_m) & P_{\bm{k} + \bm{s}_{m-1}}(z_m) & \cdots & P_{\bm{k}}(z_m) \end{pmatrix}, \\ \bm{k}\in\bbN^{r-1}_{\bm{K}}.}
\item[{ii)}] {If $D_{\bm{k}} = \det(P_{\bm{k}+\bm{s}_{m-i}}(z_j))_{i,j=1}^m \ne 0$ for any $\bm{k}\in\bbN^{r-1}_{\bm{K}}$ and any sequence of $\bbN^{r-1}$-indices $\set{\bm{s}_j}_{j = 0}^m$ with $\bm{s}_0 = 0$ and $\abs{\bm{s}_j} = j$, then $\widehat{\bm{\nu}}$ is perfect.}
    \end{itemize}
\end{thm}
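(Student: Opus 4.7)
\medskip

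\noindent\textbf{Proof plan.}
Let $Q(x)$ denote the $(m+1)\times(m+1)$ determinant on the right-hand side of \eqref{eq:ct det} (without the prefactor). The overall strategy is to show that $Q$ satisfies the orthogonality relations at the index $(\bm{k},m)$ for $\bm{\mu}=(\bm{\nu},\mu_r)$ and then transfer the statement to $\widehat{P}_{\bm{k}}$ through Theorem~\ref{thm:ct moprl}. Expanding along the first row gives $Q(x)=\sum_{l=0}^{m}(-1)^l M_l\,P_{\bm{k}+\bm{s}_{m-l}}(x)$, where $M_l$ is the $m\times m$ minor obtained by deleting the first row and $(l+1)$-st column, and in particular $M_0=D_{\bm{k}}$. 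Since $\bm{\nu}$ is perfect, the polynomials $P_{\bm{k}+\bm{s}_l}$ have pairwise distinct degrees $|\bm{k}|+l$, so the leading coefficient of $Q$ equals $D_{\bm{k}}$. Evaluating at $x=z_i$ makes two rows coincide, so $Q(z_i)=0$ for every $i$ and $\Phi\mid Q$. The $\mu_j$-orthogonality against $x^{p}$, $p=0,\ldots,k_j-1$, for $j<r$ is inherited summand-wise from each $P_{\bm{k}+\bm{s}_l}$ (which is at least $(k_j+(\bm{s}_l)_j)$-orthogonal), while the $\mu_r$-orthogonality is immediate from $\Phi\mid Q$ and Lemma~\ref{lem:finite rank}.

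For part (i), normality of $(\bm{k},m)$ and condition (iii) for normality forbid non-zero polynomials of degree below $|\bm{k}|+m$ satisfying the orthogonality. Thus either $Q\equiv 0$ or $Q$ has degree exactly $|\bm{k}|+m$, whence $D_{\bm{k}}\ne 0$ and uniqueness forces $Q=D_{\bm{k}}P_{(\bm{k},m)}=D_{\bm{k}}\Phi\widehat{P}_{\bm{k}}$, yielding \eqref{eq:ct det}. To rule out $Q\equiv 0$, assume it holds; linear independence of $\{P_{\bm{k}+\bm{s}_l}\}_{l=0}^{m}$ (distinct degrees) then forces every $M_l$ to vanish, so the $m\times(m+1)$ evaluation matrix $(P_{\bm{k}+\bm{s}_{m-l}}(z_j))$ has rank at most $m-1$ and its kernel is at least two-dimensional. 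Picking a non-zero kernel vector $(c_0,\ldots,c_m)$ with $c_0=0$ produces a non-zero polynomial $T=\sum_l c_l P_{\bm{k}+\bm{s}_{m-l}}$ of degree $<|\bm{k}|+m$, divisible by $\Phi$ (kernel condition), and still satisfying the orthogonality of $(\bm{k},m)$, which contradicts normality.

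For part (ii), the hypothesis makes $\widetilde{P}_{\bm{k}}(x):=D_{\bm{k}}^{-1}\,Q(x)/\Phi(x)$ a well-defined monic polynomial of degree $|\bm{k}|$ satisfying the $\widehat{\bm{\nu}}$-orthogonality at $\bm{k}$ (using $\widehat{\mu}_j[\cdot]=\mu_j[\Phi\cdot]$). The plan is to conclude via Theorem~\ref{thm:perfectness thm} by verifying $\jap{\widetilde{P}_{\bm{k}}(x),x^{k_j}}_j\ne 0$ for every $\bm{k}\in\bbN^{r-1}_{\bm{K}-\bm{e}_j}$ and every $j$. Choose the sequence $\{\bm{s}_l\}$ so that only $\bm{s}_0=\bm{0}$ has $j$-th coordinate zero; for instance set $\bm{s}_1=\bm{e}_j$ and keep $(\bm{s}_l)_j\geq 1$ for all $l\geq 1$. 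Applying $\mu_j[\,\cdot\,x^{k_j}]$ entrywise to the first row of $Q$, every entry but the last vanishes by $\mu_j$-orthogonality of $P_{\bm{k}+\bm{s}_{m-l}}$ (since $(\bm{s}_{m-l})_j\geq 1$ for $l<m$); the survivor is $\mu_j[P_{\bm{k}}\,x^{k_j}]$, non-zero by perfectness of $\bm{\nu}$ and Lemma~\ref{lem:small normality lemma 2}. After reindexing $\bm{s}'_{l-1}:=\bm{s}_l-\bm{e}_j$, the remaining $m\times m$ minor is a genuine $D_{\bm{k}+\bm{e}_j}$ for an admissible $\bbN^{r-1}$-sequence, hence non-zero by hypothesis, and the product is non-zero as required.

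The principal obstacle I anticipate is the bookkeeping in part (ii): one must check that $\{\bm{s}'_l\}_{l=0}^{m-1}$ satisfies $\bm{s}'_0=\bm{0}$ and $|\bm{s}'_l|=l$, extend it to a full $(m+1)$-element sequence (which is harmless since $D_{\bm{k}+\bm{e}_j}$ is independent of $\bm{s}'_m$), and confirm that $\bm{k}+\bm{e}_j$ still lies in $\bbN^{r-1}_{\bm{K}}$. A related subtlety is that the quantifier ``for any sequence'' must be read as universal, because the argument uses the non-vanishing of $D$ at two indices, $\bm{k}$ and $\bm{k}+\bm{e}_j$, for specifically linked sequences.
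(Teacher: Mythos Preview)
Your proposal is correct and follows essentially the same route as the paper. Both parts hinge on the same mechanism: the determinant $Q(x)$ satisfies the type~II orthogonality relations at the index $(\bm{k},m)$ for the extended system $(\bm{\nu},\mu_r)$, and Theorem~\ref{thm:ct moprl} transfers this to $\widehat P_{\bm{k}}$.

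Two minor organizational differences are worth noting. In part~(i), the paper argues $D_{\bm{k}}\ne 0$ directly: assuming $D_{\bm{k}}=0$, a linear dependence among the columns of the $m\times m$ submatrix immediately produces a polynomial $T=\sum_{l=0}^{m-1}c_lP_{\bm{k}+\bm{s}_l}$ of degree at most $|\bm{k}|+m-1$ vanishing at every $z_i$, contradicting normality. Your detour through the case $Q\equiv 0$ and the rank argument on the full $m\times(m+1)$ evaluation matrix reaches the same endpoint but is slightly longer. In part~(ii), the paper invokes Remark~\ref{rem:perf thm} (the path version of Theorem~\ref{thm:perfectness thm}) rather than Theorem~\ref{thm:perfectness thm} itself: fixing a path first means that at each $\bm{k}_l$ only one direction $j_l$ is relevant, so the sequence $\{\bm{s}_i\}$ used to build $Q_{\bm{k}_l}$ is canonical (it follows the path) and there is no tension with a single choice of $\widetilde P_{\bm{k}}$ having to work for every $j$ simultaneously. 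Your version can be read the same way---once the induction is underway, $\bm{k}$ is already normal and the choice of sequence becomes immaterial---but the path formulation makes this cleaner. The identity you derive for the inner product, namely $\langle \Phi\, \widetilde P_{\bm{k}},x^{k_j}\rangle_j = (-1)^m D_{\bm{k}+\bm{e}_j}\, D_{\bm{k}}^{-1}\,\langle P_{\bm{k}},x^{k_j}\rangle_j$, is exactly what the paper records.
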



\begin{proof}
{\it i)} If $D_{\bm{n}} = 0$ then the columns of $(P_{\bm{k}+\bm{s}_{m-i}}(z_j))_{i,j=1}^m$ are linearly dependent. This would imply 
\m{P(x) = \sum_{j = 0}^{m-1}c_jP_{\bm{k}+\bm{s}_j}(x) = 0, \qquad x = z_1,\dots,z_r,}
for some $(c_1,\dots,c_r) \neq (0,\dots,0)$. However, since we have a linear combination of polynomials with different degrees, $P$ cannot be identically $0$. $P$ satisfies the orthogonality conditions at the index $(\bm{k},m)$ with respect to $\mu_j$ with $1 \leq j \leq r-1$ since $\bm{k} \leq \bm{k} + \bm{s}_i$ for every $i = 0,\dots,m-1$, and with respect to $\mu_r$ by Lemma \ref{lem:finite rank} since $P$ is divisible by $\Phi$. Since $\deg{P} \leq \abs{\bm{k}}+m-1$ this would contradict the normality of $(\bm{k},m)$.

The determinant in~\eqref{eq:ct det} is a polynomial of degree $\abs{\bm{k}}+m$. It is a linear combination of $P_{\bm{k}},\dots,P_{\bm{k} + \bm{s}_m}$, and $\bm{k} \leq \bm{k}+\bm{s}_i$, so it is orthogonal to $x^p$ with respect to $\mu_j$ for $p = 0,\dots,n_j-1$, $j = 1,\dots,r-1$. Since it vanishes at $z_1,\dots,z_m$ it is orthogonal to everything with respect to $\mu_r$, by Lemma \ref{lem:finite rank}. Hence the normality of $(\bm{k},m)$ implies that $P_{(\bm{k},m)}$ coincides with the determinant in~\eqref{eq:ct det} divided by the normalization constant $D_{\bm{n}}$. Finally, Theorem~\ref{thm:ct moprl} implies~\eqref{eq:ct det}.


{\it ii)} Conversely, assume $\bm{\nu}$ is perfect and $D_{\bm{k}}\ne 0$ for any choice of path 
for all $\bm{k}\in\bbN^{r-1}_{\bm{K}}$. Fix any path $\set{\bm{k}_l}_{k = 0}^M$  of multi-indices in $\bbN^{r-1}_{\bm{K}}$ with $\bm{k}_0 = \bm{0}$ and $\bm{k}_{l+1} = \bm{k}_l + \bm{e}_{j_l}$. 
For each $\bm{k}_l\in\N_{\bm{K}}^{r-1}$, denote $Q_{\bm{k}_l}$ to be the polynomial on the right-hand side of \eqref{eq:ct det}, where we choose $\bm{s}_j$ in such a way that $\bm{k}+\bm{s}_j$ for each $j$ belongs to the chosen path $\set{\bm{k}_l}_{l = 0}^M$. It is clear that $Q_{\bm{k}_l}$ is monic of degree $\abs{\bm{k}_l}$ and satisfies all the type II orthogonality conditions for $\widehat{\bm{\nu}}$. Let us show that $Q_{\bm{k}_l}$ satisfy the conditions of Remark~\ref{rem:perf thm} with respect to the system $\widehat{\nu}$. Indeed, by~\eqref{eq:ct det}, 
\begin{equation}
\jap{\Phi(x)Q_{\bm{k}_l}(x),x^{(\bm{k}_l)_{j_l}}_{j_l}} = \frac{(-1)^m D_{\bm{k}_{l+1}}}{D_{\bm{k}_l}} \jap{P_{\bm{k}_l}(x),x^{(\bm{k}_l)_{j_l}}}_{j_l}, 
\end{equation}
which is non-zero by the assumptions and Lemma \ref{lem:small normality lemma 2} applied to $\bm{\nu}$. Hence by Remark~\ref{rem:perf thm} every index along the path $\set{\bm{k}_l}_{l = 0}^M$ is normal for $\widehat{\bm{\nu}}$, and by taking different paths that cover all $\N_{\bm{K}}^{r-1}$-indices, we see that $\widehat{\bm{\nu}}$ is perfect.

\end{proof}

    

\begin{rem}\label{rem:modification}
    If we remove the assumption $\bm{\nu} \in \mathcal{L}_{\infty}^{r-1}$ then we get complications when $N_j<\infty$ for all $j = 1,\dots,r-1$, since a sequence of normal indices $\set{\bm{k} + \bm{s}_j}_{j = 0}^m$ will not exist for every $\bm{k} \in \N_{\bm{K}}^{r-1}$. Indeed, if $|\bm{k}| > |\bm{K}|-m$, then $\bm{k}+\bm{s}_j$ will have to be outside of $\N_{\bm{K}}^{r-1}$ for $j> |\bm{K}|-|\bm{k}|$ and therefore will not be normal. Theorem~\ref{thm:ChrFull}
    still works if one replaces $P_{\bm{k}+\bm{s}_j}(x)$ 
    for those $j$'s with, for example, $x^{\abs{\bm{k}}-\abs{\bm{K}}+j} P_{\bm{K}} (x)$. The above proof works without any significant change. More generally, as long as $\deg{P_{\bm{k}+\bm{s}_j}} = \abs{\bm{k}} + j$ for each $j = 1,\dots,m$, the proof still holds even if these indices are not normal. 
\end{rem}

\begin{rem}\label{rem:modification 2} If we remove the assumption that $\{z_j\}_{j=1}^m$ are distinct, then the theorem still holds 
but the matrix in 
~\eqref{eq:ct det}
should be modified as follows. If $z_j$ is a root of $\Phi$ of multiplicity $l$, then instead of $l$ copies of the row
$\begin{pmatrix} P_{\bm{k} + \bm{s}_m}(z_j) & P_{\bm{k} + \bm{s}_{m-1}}(z_j) & \cdots & P_{\bm{k}}(z_j) \end{pmatrix}$
we have the rows
\m{\begin{pmatrix} P_{\bm{k} + \bm{s}_m}(z_j) & P_{\bm{k} + \bm{s}_{m-1}}(z_j) & \cdots & P_{\bm{k}}(z_j) \\ 
P_{\bm{k} + \bm{s}_m}'(z_j) & P_{\bm{k} + \bm{s}_{m-1}}'(z_j) & \cdots & P_{\bm{k}}'(z_j) \\
\vdots & \vdots & \ddots & \vdots \\
P_{\bm{k}+\bm{s}_m}^{(l-1)}(z_j) & P_{\bm{k} + \bm{s}_{m-1}}^{(l-1)}(z_j) & \cdots & P_{\bm{k}}^{(l-1)}(z_j)
\end{pmatrix},}
and the corresponding modifications should be done in the determinant $D_{\bm{k}}$
to make $\widehat{P}_{\bm{k}}$ monic.
This change ensures that $z_j$ is a root of multiplicity $l$ of the right-hand side of~\eqref{eq:ct det}. 
The previous remark still holds.
\end{rem}


\subsection{Nearest Neighbour Recurrence Coefficients for the Christoffel Transforms}\label{ss:NNCC}
\hfill\\

In Theorem~\ref{thm:ct moprl} we stated the relationship between the type II polynomials of $\widehat{\bm{\nu}}$ and $\bm{\mu}$ (for type I polynomials, see Theorem~\ref{thm:det formula type I} below).
It will come as no surprise that the nearest neighbour recurrence coefficients of $\widehat{\bm{\nu}}$ and $\bm{\mu}$ are also related.

\begin{thm}\label{thm:zero coefficients finite systems}
    Suppose $\bm{\mu}\in\mathcal{L}^r$ with $\mu_{r}\in\mathcal{L}_{m}$, where $m<\infty$. Then:
    \begin{itemize}
        \item[i)] $a_{(\bm{k},m),r} = 0 $ for all $\bm{k}\in\bbN^{r-1}_{\bm{K}}$.

        \item[ii)]  
        Additionally, suppose $\bm{\nu}$ and $\widehat{\bm{\nu}}$ are perfect. Then
        \begin{alignat}{5}
            &a_{(\bm{k},m),j}=\widehat{a}_{\bm{k},j}  , \qquad &\bm{k}&\in\bbN^{r-1}_{\bm{K}}, \qquad &j& = 1,\dots,r-1, \\
            &b_{(\bm{k},m),j} = \widehat{b}_{\bm{k},j}  , &\bm{k}&\in\bbN^r_{\bm{K}-\bm{e}_j}, \qquad &j& = 1,\dots,r-1.
        \end{alignat}
    \end{itemize}  
\end{thm}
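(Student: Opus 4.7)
For part (i), the plan is to exploit the NNRR \eqref{eq:one nnr} for $\bm{\mu}$ at the index $(\bm{k},m)$ in a direction $l\in\{1,\dots,r-1\}$ for which $(\bm{k}+\bm{e}_l,m)$ is normal. Such an $l$ exists whenever $a_{(\bm{k},m),r}$ is defined via Remark~\ref{rem:NNRcoef1}, since $(\bm{k},m)+\bm{e}_r$ cannot be normal by Proposition~\ref{rem:perf}. The NNRR reads
\begin{equation*}
xP_{(\bm{k},m)} = P_{(\bm{k}+\bm{e}_l,m)} + b_{(\bm{k},m),l}P_{(\bm{k},m)} + \sum_{i=1}^{r-1}a_{(\bm{k},m),i}P_{(\bm{k}-\bm{e}_i,m)} + a_{(\bm{k},m),r}P_{(\bm{k},m-1)}.
\end{equation*}
Every polynomial $P_{(\bm{k}',m)}$ whose last coordinate is $m$ satisfies $\jap{P_{(\bm{k}',m)},x^p}_r=0$ for $p=0,\dots,m-1$ and is therefore divisible by $\Phi$ by Lemma~\ref{lem:finite rank} (note this does not require $(\bm{k}',m)$ to be normal). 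Hence the LHS and every RHS term except the last are divisible by $\Phi$, which forces $a_{(\bm{k},m),r}P_{(\bm{k},m-1)}$ to be divisible by $\Phi$ as well. On the other hand, normality of $(\bm{k},m)$ combined with Lemma~\ref{lem:small normality lemma 2} applied at $\bm{n}=(\bm{k},m-1)$ in direction $r$ yields $\jap{P_{(\bm{k},m-1)},x^{m-1}}_r\ne 0$, so $P_{(\bm{k},m-1)}$ is not divisible by $\Phi$. Thus $a_{(\bm{k},m),r}=0$. The corner case $\bm{k}=\bm{K}$ (when $\bm{N}$ is finite) is covered by the convention $a_{\bm{N},r}:=0$ from Remark~\ref{rem:perfectAB}.

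For part (ii), assuming now that both $\bm{\nu}$ and $\widehat{\bm{\nu}}$ are perfect, I substitute $P_{(\bm{k}',m)}(x)=\Phi(x)\widehat{P}_{\bm{k}'}(x)$ from Theorem~\ref{thm:ct moprl} into the NNRR above (already stripped of its last term by part (i)) and cancel the common factor $\Phi(x)$, obtaining
\begin{equation*}
x\widehat{P}_{\bm{k}} = \widehat{P}_{\bm{k}+\bm{e}_l} + b_{(\bm{k},m),l}\widehat{P}_{\bm{k}} + \sum_{i=1}^{r-1}a_{(\bm{k},m),i}\widehat{P}_{\bm{k}-\bm{e}_i}.
\end{equation*}
This is exactly an NNRR for $\widehat{\bm{\nu}}$ at $\bm{k}$ in direction $l$. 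By uniqueness of the nearest neighbour coefficients in the perfect system $\widehat{\bm{\nu}}$ (Remark~\ref{rem:NNRcoef1}, together with linear independence of $\{\widehat{P}_{\bm{k}-\bm{e}_i}\}_{i=1}^{r-1}$, which is the type II analogue of Lemma~\ref{lem:nnr lem 2}), one reads off $b_{(\bm{k},m),l}=\widehat{b}_{\bm{k},l}$ and $a_{(\bm{k},m),i}=\widehat{a}_{\bm{k},i}$ for $i=1,\dots,r-1$. The equality at the corner $\bm{k}=\bm{K}$ for the $a$-coefficients follows because both sides equal zero by convention.

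The entire argument is driven by Theorem~\ref{thm:ct moprl}, so no substantive difficulty is anticipated. The only mild subtlety is bookkeeping at the boundary of $\bbN^r_{\bm{N}}$: one must make sure that every polynomial appearing in the NNRR is meaningful, that the conventions $a_{\bm{N},j}:=0$ agree on both sides, and that for those neighbouring indices $(\bm{k}-\bm{e}_i,m)$ which might fail to be normal the divisibility of $P_{(\bm{k}-\bm{e}_i,m)}$ by $\Phi$ is still available directly from Lemma~\ref{lem:finite rank} (rather than from Theorem~\ref{thm:ct moprl}, which requires normality).
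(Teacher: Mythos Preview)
Your proof is correct. For part~{\it ii)} your argument is exactly what the paper does (the paper just says ``clear from Theorem~\ref{thm:ct moprl} and {\it i)}'' and leaves the substitution and coefficient comparison implicit). For part~{\it i)} you take a slightly more hands-on route: you write out the NNRR, use Lemma~\ref{lem:finite rank} to see that every term with last coordinate $m$ is divisible by $\Phi$, and then use Lemma~\ref{lem:small normality lemma 2} to see that $P_{(\bm{k},m-1)}$ is not. The paper instead observes that $(\bm{k},m+1)$ is never normal by Proposition~\ref{rem:perf} and then invokes Lemma~\ref{lem:normality vs recurrence}{\it a)} (which packages the same computation via \eqref{eq:a}) to conclude $a_{(\bm{k},m),r}=0$ in one line. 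Both arguments rest on the same ingredients; yours just unrolls the abstraction.
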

\begin{proof}
    {\it i)} This follows immediately from Lemma~\ref{lem:normality vs recurrence} a) along with Proposition \ref{rem:perf} (except in the case $\bm{k} = \bm{K}$, in which case it follows by definition through Remark \ref{rem:perfectAB}, for the sake of convenience).

    {\it ii) } This is clear from Theorem~\ref{thm:ct moprl} and {\it i)}.
\end{proof}


Now let 
$\bm{\mu}\in\mathcal{L}^r$ be perfect in the extended sense of Definition~\ref{def:perf}. Suppose that the Jacobi coefficients of each $\mu_j\in\mathcal{L}$ is given, i.e., $\{a_{n\bm{e}_j,j}\}_{n=1}^{N_j}$  and $\{b_{n\bm{e}_j,j}\}_{n=0}^{N_j-1}$ for each $1\le j \le r$ (here $a_{n\bm{e}_j,N_j}=0$ if $N_j<\infty$). It is clear that this information is sufficient to determine all the NNRR coefficients  $\{a_{\bm{n},j}\}_{\bm{n}\in\bbN^r_{\bm{N}}}$  and $\{b_{\bm{n},j}\}_{\bm{n}\in\bbN^r_{\bm{N}-\bm{e}_j}}$ for each $1\le j \le r$.
Filipuk, Haneczok, and Van Assche in \cite{Computing NNR} put forward a recursive algorithm for this, based on the compatibility conditions~\eqref{eq:CC1}--\eqref{eq:CC3}. In the next result, we state their algorithm in the more general setting $\bm{\mu}\in\mathcal{L}^r$, where we allow $N_j$'s to be finite. The proof, which is a straightforward application of Theorem~\ref{thm:compatibility conditions}, is following the same lines as the proof of~\cite[Thm 3.3]{Computing NNR} except that extra care needs to be taken with the indices $\bm{n}$ with $n_j=N_j$ if $N_j<\infty$. Indeed, the $a_{\bm{n},j}$ coefficients vanish there, as we just showed in Theorem~\ref{thm:zero coefficients finite systems}{\it i)}.




\begin{thm}[The CC Algorithm, \cite{Computing NNR}]\label{thm:NNCC}
Suppose $\bm{\mu} = (\mu_1,\dots,\mu_r)$ is perfect. Given the Jacobi coefficients of each $\mu_j$, $j = 1,\dots,r$, the following algorithm produces all the NNRR coefficients: 
\begin{align*}
    & \mbox{for all } 1\le j, k\le r \mbox{ with } j\ne k:
    \\
    & \qquad a_{\bm{n},j} := 0  \mbox{ for all } \bm{n}\in\bbN^r_{\bm{N}} \mbox{ with } n_j=0; 
    \\
    & \qquad \delta_{\bm{0},j,k}:= b_{\bm{0},k} - b_{\bm{0},j};
    \\
    & \mbox{for all } d\in\bbN:
    \\
    & \qquad \mbox{for all } 1\le j, k\le r \mbox{ with } j\ne k:
    \\
    & \qquad\qquad \mbox{for all } \bm{n}\in\N_{\bm{N}}^r \mbox{ with } n_k>0,n_j>0, |\bm{n}|=d:  
    \\
    & \qquad\qquad\qquad a_{\bm{n} ,j} =
    \begin{cases}
         0 &\mbox{if } a_{\bm{n}-\bm{e}_k,j} = 0,\\
         a_{\bm{n}-\bm{e}_k,j}
    \frac{\delta_{\bm{n}-\bm{e}_k,k,j}}{\delta_{\bm{n}-\bm{e}_k-\bm{e}_j,k,j}}  &\mbox{otherwise};
    \end{cases}
    \\
    & \qquad \mbox{for all } 1\le j, k\le r \mbox{ with } j\ne k:
    \\
    & \qquad\qquad \mbox{for all } \bm{n}\in\N_{\bm{N}-\bm{e}_j}^r \mbox{ with } n_k>0, |\bm{n}|=d: 
    \\
    & \qquad\qquad \qquad
    b_{\bm{n},j} = b_{\bm{n}-\bm{e}_k,j} + \frac{\sum_{i=1}^r a_{\bm{n} ,i} - \sum_{i=1}^r a_{\bm{n} + \bm{e}_j-\bm{e}_k,i}}{\delta_{\bm{n}-\bm{e}_k,k,j}};
    \\
    & \qquad \mbox{for all } 1\le j, k\le r \mbox{ with } j\ne k:
    \\
    & \qquad\qquad \mbox{ for all } \bm{n}\in\N_{\bm{N}-\bm{e}_j-\bm{e}_k}^r, |\bm{n}|=d:
    \\
    & \qquad\qquad \qquad 
     \delta_{\bm{n},j,k}:= b_{\bm{n},k} - b_{\bm{n},j}.
\end{align*}
\end{thm}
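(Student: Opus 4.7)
The plan is to proceed by strong induction on $d=|\bm{n}|$, showing at each stage that every coefficient assigned a value by the algorithm equals the true NNRR coefficient, and that every division carried out is by a non-zero quantity. Since $\bm{\mu}$ is assumed perfect, Theorem~\ref{thm:compatibility conditions} (and therefore the equivalent system~\eqref{eq:CC1.2}--\eqref{eq:CC3.2}) is applicable at every index where the relevant coefficients are defined, and Lemma~\ref{lem:normality vs recurrence}(b) guarantees that all the $\delta_{\bm{n},j,k}$'s are non-zero. I will use these two facts repeatedly without further comment.

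For the base case $d=0$, by the convention following Theorem~\ref{thm:nnr} we have $a_{\bm{0},j}=0$ for every $j$, and $b_{\bm{0},j}$ is the zeroth Jacobi coefficient of $\mu_j$; both are direct inputs. The initialization $\delta_{\bm{0},j,k}=b_{\bm{0},k}-b_{\bm{0},j}$ is then exactly the definition. Inductively, suppose that for all multi-indices of total degree strictly less than $d$ the algorithm has produced the correct $a_{\bm{n},j}$, $b_{\bm{n},j}$, and $\delta_{\bm{n},j,k}$. Fix $\bm{n}$ with $|\bm{n}|=d$. If $\bm{n}$ lies on an axis, i.e.\ $\bm{n}=n\bm{e}_j$, then the needed $a_{\bm{n},j}$ and $b_{\bm{n},j}$ are the Jacobi coefficients of $\mu_j$ and hence given. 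Otherwise pick any $k\ne j$ with $n_k>0$ and $n_j>0$: the identity~\eqref{eq:CC3.2}, applied with the roles relabeled, yields
\[
a_{\bm{n},j}\;=\;a_{\bm{n}-\bm{e}_k,j}\,\frac{\delta_{\bm{n}-\bm{e}_k,k,j}}{\delta_{\bm{n}-\bm{e}_k-\bm{e}_j,k,j}},
\]
both quantities on the right being known by the inductive hypothesis, and the denominator being non-zero. Similarly~\eqref{eq:CC2.2} solves for $b_{\bm{n},j}$ once every $a_{\bm{n},i}$ and $a_{\bm{n}+\bm{e}_j-\bm{e}_k,i}$ of degree $d$ has been produced, and after all $b$-values are recorded we set $\delta_{\bm{n},j,k}:=b_{\bm{n},k}-b_{\bm{n},j}$. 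This closes the inductive step.

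The only delicate point, and the one distinguishing the statement from~\cite[Thm 3.3]{Computing NNR}, is what happens when some $N_j$ is finite. I would argue as follows. First, if $a_{\bm{n}-\bm{e}_k,j}=0$ then~\eqref{eq:CC3.2} forces $\delta_{\bm{n}-\bm{e}_k-\bm{e}_j,k,j}\,a_{\bm{n},j}=0$, and since the left-hand delta is non-zero by perfectness, $a_{\bm{n},j}=0$ as well; this justifies the conditional branch in the algorithm. Second, this branch correctly propagates the identities $a_{(\bm{k},N_r),r}=0$ (more generally $a_{\bm{n},j}=0$ for $n_j=N_j$) established in Theorem~\ref{thm:zero coefficients finite systems}(i): once an $a$-coefficient becomes zero on the far boundary it stays zero along every direction $\bm{e}_k$ used for the recursion. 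Third, when $n_j=N_j$ the quantity $b_{\bm{n},j}$ is not defined (since $\bm{n}+\bm{e}_j$ is not normal), which is precisely why the algorithm restricts the $b$-update to $\bm{n}\in\bbN^r_{\bm{N}-\bm{e}_j}$; this respects the convention in Remark~\ref{rem:perfectAB}.

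The main obstacle is bookkeeping rather than mathematics: one must verify that every quantity invoked on the right-hand side of each assignment has already been produced at some earlier stage of the induction (the chosen ordering --- first all $a$-updates of degree $d$, then all $b$-updates, then all $\delta$-updates --- makes this work), and that the exceptional branch in the $a$-update exactly accommodates both the trivial vanishing at $n_j=0$ and the non-trivial vanishing at $n_j=N_j$ from Theorem~\ref{thm:zero coefficients finite systems}. Once these verifications are carried out level by level, the algorithm terminates with the full list of $\{a_{\bm{n},j}\}_{\bm{n}\in\bbN^r_{\bm{N}}}$ and $\{b_{\bm{n},j}\}_{\bm{n}\in\bbN^r_{\bm{N}-\bm{e}_j}}$ as required.
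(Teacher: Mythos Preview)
Your proof is correct and follows essentially the same approach as the paper: strong induction on $|\bm{n}|$, using \eqref{eq:CC3.2} for the $a$-update and \eqref{eq:CC2.2} for the $b$-update, with Lemma~\ref{lem:normality vs recurrence}(b) ensuring the $\delta$'s are non-zero and Theorem~\ref{thm:zero coefficients finite systems}(i) handling the boundary $n_j=N_j$. The paper's version is slightly more direct in that it invokes Theorem~\ref{thm:zero coefficients finite systems} outright for the case $n_j=N_j$ rather than first arguing via \eqref{eq:CC3.2} (whose hypotheses in Theorem~\ref{thm:compatibility conditions} technically require $\bm{n}+\bm{e}_j$ to be normal), but since you also cite the theorem your argument goes through.
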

\begin{proof}
    Assume all coefficients of indices with size $< d$ are computed and consider indices $\bm{n}$ with $\abs{\bm{n}} = d$. Using \eqref{eq:CC3.2} we can compute the coefficients $a_{\bm{n},j}$ when $0 < n_j < N_j$ by
    \m{a_{\bm{n},j} = a_{\bm{n}-\bm{e}_k,j}
    \frac{\delta_{\bm{n}-\bm{e}_k,k,j}}{\delta_{\bm{n}-\bm{e}_k-\bm{e}_j,k,j}},} 
    where $k \neq j$ and $n_k > 0$ (if such a $k$ does not exist then $\bm{n} = d\bm{e}_j$, in which case $a_{\bm{n},j}$ is already given). If $n_j = 0$ then we have $a_{\bm{n},j} = 0$ by definition, and if $n_j = N_j$ then we have $a_{\bm{n},j} = 0$ by Theorem \ref{thm:zero coefficients finite systems}, which agrees with the algorithm. 
    
    With every $a_{\bm{n},j}$ computed, we can then use \eqref{eq:CC2.2} to compute the coefficients $b_{\bm{n},j}$ by
    \m{b_{\bm{n},j} = b_{\bm{n}-\bm{e}_k,j} + \frac{\sum_{i=1}^r a_{\bm{n} ,i} - \sum_{i=1}^r a_{\bm{n} + \bm{e}_j-\bm{e}_k,i}}{\delta_{\bm{n}-\bm{e}_k,k,j}},}
    with $k$ chosen such that $k \neq j$ and $n_k > 0$. Hence all the NNRR coefficients can be computed, by induction on $\abs{\bm{n}}$ (in the case $\abs{\bm{n}} = 0$ there is nothing to compute).

\end{proof}
\begin{rem} 
Note that the algorithm never breaks down for perfect systems since $\delta_{\bm{n},j,k}\ne 0$ for all $\bm{n}\in\N_{\bm{N}-\bm{e}_j-\bm{e}_k}^r$, by Lemma \ref{lem:normality vs recurrence} b).
\end{rem}
\begin{rem}
    The algorithm in the way that is written here is not optimized since most of the coefficients are computed more than once (generically $r$ times, of course all leading to the same answer); if one cares about the computational efficiency then one would need to be more careful with the loops to avoid repetitions.
\end{rem}

\begin{cor}
Suppose $\bm{\mu}$ is perfect and $\mu_r\in \mathcal{L}_m$. Then the recurrence coefficients of $\widehat{\bm{\nu}}$ can be computed from the recurrence coefficients of $\bm{\nu}$ using the CC algorithm (see Theorem~\ref{thm:NNCC}) applied to ${\bm{\mu}}$.
\end{cor}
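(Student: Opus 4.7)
My plan is to invoke the CC algorithm (Theorem~\ref{thm:NNCC}) directly on the full system $\bm{\mu}=(\bm{\nu},\mu_r)$, and then to read off the desired coefficients from the output at indices of the form $(\bm{k},m)$.

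First I would verify that all inputs required by the CC algorithm are at our disposal. The algorithm takes as input the Jacobi coefficients of each measure $\mu_j$, $j=1,\dots,r$. For $j=1,\dots,r-1$ these are precisely the nearest neighbour coefficients of $\bm{\nu}$ restricted to the $j$-th coordinate axis, hence are part of the recurrence coefficients of $\bm{\nu}$ that are assumed given. For $j=r$ the Jacobi coefficients come from $\mu_r\in\mathcal{L}_m$ itself, which is part of the data (it encodes $\Phi$).

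Next, I would invoke Theorem~\ref{thm:zero coefficients finite systems}(ii), which asserts that $a_{(\bm{k},m),j}=\widehat{a}_{\bm{k},j}$ and $b_{(\bm{k},m),j}=\widehat{b}_{\bm{k},j}$ for $j=1,\dots,r-1$, provided both $\bm{\nu}$ and $\widehat{\bm{\nu}}$ are perfect. Perfectness of $\bm{\nu}$ is immediate from perfectness of $\bm{\mu}$ (indices $\bm{k}\in\bbN^{r-1}_{\bm{K}}$ embed as $(\bm{k},0)\in\bbN^r_{\bm{N}}$), and perfectness of $\widehat{\bm{\nu}}$ follows from Theorem~\ref{thm:ct moprl}, since normality of $(\bm{k},m)$ in $\bm{\mu}$ is equivalent to normality of $\bm{k}$ in $\widehat{\bm{\nu}}$. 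Thus running the CC algorithm on $\bm{\mu}$ and extracting the coefficients indexed by $(\bm{k},m)$ recovers all nearest neighbour coefficients of $\widehat{\bm{\nu}}$.

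The only thing that could genuinely go wrong is a breakdown of the algorithm (a vanishing denominator $\delta_{\bm{n},j,k}$), but by the remark following Theorem~\ref{thm:NNCC} this never occurs on a perfect system, and $\bm{\mu}$ is perfect by hypothesis. A minor bookkeeping point is the boundary index $\bm{k}=\bm{K}$ when all $N_j<\infty$: there the convention $a_{\bm{K},j}:=0$ from Remark~\ref{rem:perfectAB} is consistent with $a_{(\bm{K},m),j}=0$ guaranteed by Theorem~\ref{thm:zero coefficients finite systems}(i), so the identification of coefficients remains valid at the corner. Overall the corollary is essentially a bookkeeping consequence of the two preceding theorems and requires no further work.
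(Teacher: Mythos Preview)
Your proposal is correct and follows essentially the same approach as the paper, which simply states that the result is immediate from Theorem~\ref{thm:zero coefficients finite systems} and Theorem~\ref{thm:NNCC}. You have merely unpacked this one-line justification in more detail, including the verification that $\bm{\nu}$ and $\widehat{\bm{\nu}}$ are perfect (so that part (ii) of Theorem~\ref{thm:zero coefficients finite systems} applies) and the observation that the algorithm does not break down.
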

\begin{proof}
    This is immediate from Theorem~\ref{thm:zero coefficients finite systems} and Theorem~\ref{thm:NNCC}.
\end{proof}

    \begin{figure}[ht]
\begin{tikzpicture}

\foreach \x in {0,1,...,11}{
\foreach \y in {0,1,...,6}{
\node[draw,circle,inner sep=1pt,fill] at (\x,\y) {};}}
\draw [ultra thick,-latex,black] (0,0) -- (0,6) node[midway,above,sloped] {$\mu_2$};
\draw [ultra thick,-latex,black] (0,0) -- (13,0) node [above left] {$\mu_1$};
\draw [ultra thick,-latex,cyan] (0,6) -- (13,6) node [above left,black] {$\widehat{\mu}_1$};
\draw [ultra thick,cyan] (1,0) -- (0,1);
\draw [ultra thick,cyan] (2,0) -- (0,2);
\draw [ultra thick,cyan] (3,0) -- (0,3);
\draw [ultra thick,cyan] (4,0) -- (0,4);
\draw [ultra thick,cyan] (5,0) -- (0,5);
\draw [ultra thick,-latex,cyan] (0,0) -- (0.5,0.5) node{} [above left];
\draw [ultra thick,-latex,cyan] (0.5,0.5) -- (1,1) node{} [above left];
\draw [ultra thick,-latex,cyan] (1,1) -- (1.5,1.5) node{} [above left];
\draw [ultra thick,-latex,cyan] (1.5,1.5) -- (2,2) node{} [above left];
\draw [ultra thick,-latex,cyan] (2,2) -- (2.5,2.5) node{} [above left];
\draw [ultra thick,-latex,cyan] (2.5,2.5) -- (3,3) node{} [above left];
\node[scale=2] at (12,0) {$\ldots$};
\node[scale=2] at (12,1) {$\ldots$};
\node[scale=2] at (12,2) {$\ldots$};
\node[scale=2] at (12,3) {$\ldots$};
\node[scale=2] at (12,4) {$\ldots$};
\node[scale=2] at (12,5) {$\ldots$};
\node[scale=2] at (12,6) {$\ldots$};

\end{tikzpicture} 
\caption{The recurrence coefficients of $(\mu_1,\mu_2)$ can be computed via the CC algorithm (Theorem~\ref{thm:NNCC}). On the upper boundary we find the recurrence coefficients of ${\widehat{\mu}}_1$.}
\end{figure}
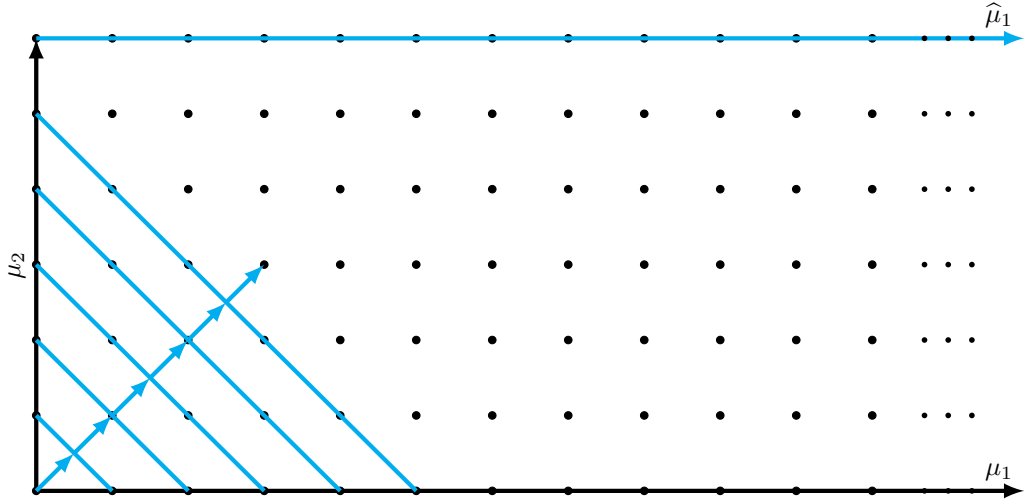




\subsection{Two examples}\hfill\\

When $m=1$ in Theorem \ref{thm:ChrFull} we get the one-step Christoffel transform $\widehat{\bm{\nu}}$ of a multiple orthogonal system $\bm{\nu}$. This corresponds to taking $\mu_r$ to be the Dirac delta measure $\delta_{z_0}$ at $z_0$ for some $z_0\in\bbC$ (i.e., $\delta_{z_0}[x^n]:=z_0^n$) and $\Phi(x)=x-z_0$. Here, the determinantal condition is particularly interesting. The formula~\eqref{eq:ChrP1} has appeared earlier in~\cite{ADMVA}. In the case $r - 1 = 1$ this is a result Gautschi \cite{Gautschi}.

\begin{thm}\label{thm:ChrMOPRLII}
   Suppose $\bm{\nu}\in\mathcal{L}^{r-1}$ is perfect. Then $\widehat{\bm{\nu}}$ is perfect if and only if $z_0$ is not a root of any $P_{\bm{k}}(x)$, $\bm{k}\in\bbN^{r-1}_{\bm{K}}$, and then the type II polynomials with respect to $\bm{\nu}$ are given by
       \begin{equation}\label{eq:ChrP1}
       \widehat{P}_{\bm{k}}(x) = \frac{1}{x-z_0} \left({P}_{\bm{k}+\bm{e}_j}(x) - \frac{{P}_{\bm{k}+\bm{e}_j}(z_0)}{{P}_{\bm{k}}(z_0)}  {P}_{\bm{k}}(x)\right), \qquad \bm{k}\in\bbN^{r-1}_{\bm{K}-\bm{e}_j}, \qquad j = 1,\dots,r-1.
       \end{equation}
\end{thm}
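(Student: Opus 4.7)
The plan is to recognize Theorem~\ref{thm:ChrMOPRLII} as the special case $m=1$ of Theorem~\ref{thm:ChrFull}, obtained by taking $\mu_r$ to be the Dirac functional $\delta_{z_0}$ (so that $\delta_{z_0}\in\mathcal{L}_1$ with orthogonal polynomial $\Phi(x)=x-z_0$). With this choice, $\widehat{\bm{\nu}}=\Phi\bm{\nu}$ is exactly the one-step Christoffel transform defined by $z_0$. Theorem~\ref{thm:ct moprl} gives the identification $P_{(\bm{k},1)}(x) = (x-z_0)\widehat{P}_{\bm{k}}(x)$ as soon as the relevant normality holds, and reduces the question of normality of $\bm{k}$ for $\widehat{\bm{\nu}}$ to the normality of $(\bm{k},1)$ for $\bm{\mu}=(\bm{\nu},\delta_{z_0})$.

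With $m=1$, any admissible path is of the form $\bm{s}_0=\bm{0}$, $\bm{s}_1=\bm{e}_j$ for some $j\in\{1,\dots,r-1\}$ with $k_j<K_j$, and the normalizing determinant collapses to the single scalar $D_{\bm{k}} = P_{\bm{k}}(z_0)$, which is independent of the choice of $j$. Applying Theorem~\ref{thm:ChrFull}{\it ii)}, if $P_{\bm{k}}(z_0)\ne 0$ for every $\bm{k}\in\bbN^{r-1}_{\bm{K}}$, then $\widehat{\bm{\nu}}$ is perfect; conversely, if $\widehat{\bm{\nu}}$ is perfect then every $(\bm{k},1)$ is normal for $\bm{\mu}$, and the nonvanishing argument in the proof of Theorem~\ref{thm:ChrFull}{\it i)} forces $P_{\bm{k}}(z_0)=D_{\bm{k}}\ne 0$. (When some $N_j<\infty$ one invokes Remark~\ref{rem:modification} to dispose of the mild boundary issues; here the only constraint is that $\bm{k}+\bm{e}_j$ remain in $\bbN^{r-1}_{\bm{K}}$, i.e.\ $\bm{k}\in\bbN^{r-1}_{\bm{K}-\bm{e}_j}$.)

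Once this is in place, the explicit formula is just the expansion of the $2\times 2$ determinant in~\eqref{eq:ct det}:
\[
\widehat{P}_{\bm{k}}(x) = \frac{1}{x-z_0}\,P_{\bm{k}}(z_0)^{-1}\det\begin{pmatrix} P_{\bm{k}+\bm{e}_j}(x) & P_{\bm{k}}(x) \\ P_{\bm{k}+\bm{e}_j}(z_0) & P_{\bm{k}}(z_0)\end{pmatrix}
=\frac{1}{x-z_0}\left(P_{\bm{k}+\bm{e}_j}(x)-\frac{P_{\bm{k}+\bm{e}_j}(z_0)}{P_{\bm{k}}(z_0)}P_{\bm{k}}(x)\right),
\]
which is~\eqref{eq:ChrP1}. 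Independence of the right-hand side on the chosen $j$ (whenever more than one $j$ is admissible) is automatic from uniqueness of $\widehat{P}_{\bm{k}}$, and can also be read off directly from~\eqref{eq:nnr cor} in Corollary~\ref{cor:nnr cor}.

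There is no real obstacle here: the only thing one has to check carefully is that the $1\times 1$ determinant $D_{\bm{k}}$ does reduce to $P_{\bm{k}}(z_0)$ (so that the perfectness criterion of Theorem~\ref{thm:ChrFull}{\it ii)} translates to a condition purely on the values $P_{\bm{k}}(z_0)$), and that the denominator in~\eqref{eq:ChrP1} is precisely the quantity whose nonvanishing characterizes perfectness. Both are immediate from the $m=1$ specialization of the matrices appearing in Theorem~\ref{thm:ChrFull}.
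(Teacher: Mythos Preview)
Your proposal is correct and follows exactly the paper's approach: the paper's proof is the single line ``$D_{\bm{k}} = P_{\bm{k}}(z_0)$,'' i.e., it treats the theorem as the $m=1$ specialization of Theorem~\ref{thm:ChrFull} (together with Theorem~\ref{thm:ct moprl} and Remark~\ref{rem:modification}), which is precisely what you do, with some additional elaboration on the two directions of the equivalence and the boundary issues.
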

\begin{proof}
    $D_{\bm{k}} = P_{\bm{k}}(z_0)$.
\end{proof}

The compatibility conditions/CC algorithm now starts to resemble the original computation algorithm \eqref{eq:Gautschi algorithm}. If we write $\delta_{\bm{k},j}$ for $\delta_{(\bm{k},0),j,r}$, $j = 1,\dots,r-1$, the equations \eqref{eq:CC1.2}--\eqref{eq:CC3.2} for the system $\bm{\mu} = (\bm{\nu},\delta_{z_0})$ with $k = r$ then turn into 
\begin{align}
\label{eq:CC1.3}
    & \widehat{b}_{\bm{k},j} - \delta_{\bm{k} + \bm{e}_j,j} = b_{\bm{k}+\bm{e}_j,j} - \delta_{\bm{k},j},\\
    \label{eq:CC2.3}
    & \sum_{i=1}^{r-1} \widehat{a}_{\bm{k},i} - \delta_{\bm{k},j}\widehat{b}_{\bm{k},j} = \sum_{i=1}^{r-1} a_{\bm{k} + \bm{e}_j,i} - \delta_{\bm{k},j}b_{\bm{k},j}, \\
    \label{eq:CC3.3}
    & \delta_{\bm{k}-\bm{e}_j,j}\widehat{a}_{\bm{k},j} = \delta_{\bm{k},j}a_{\bm{k},j}, 
\end{align}
with the initial conditions $\delta_{\bm{0},j} = z_0 - b_{\bm{0},j}$ and $\widehat{a}_{\bm{k},j} = 0$ when $k_j = 0$. In the case $r = 2$ we end up exactly with Gautschi's equations \eqref{eq:Gautschi algorithm}. 
We can now generalize the computation algorithm to one-step Christoffel transforms for multiple orthogonal polynomials.
\begin{thm} Suppose $\bm{\nu}$ is perfect and $z_0$ is not a root of any $P_{\bm{k}}$, $\bm{k} \in \N_{\bm{K}}^{r-1}$. Given the NNRR coefficients of $\bm{\nu}$, the following algorithm produces all the NNRR coefficients of $\widehat{\bm{\nu}}:=(z-z_0)\bm\nu$:
\begin{align*}
    & \mbox{for all } 1\le j \le r-1:
    \\
    & \qquad \delta_{\bm{0},j}:= z_0 - b_{\bm{0},j};
    \\
    & \qquad \widehat{a}_{\bm{k},j}:=0  \mbox{ for all } \bm{k}\in\N^{r-1}_{\bm{K}} \mbox{ with } k_j=0; 
    \\
    & \qquad \widehat{b}_{\bm{0},j}:= b_{\bm{0},j} - \frac{a_{\bm{e_j},j}}{\delta_{\bm{0},j}};
    \\
    & \mbox{for all } d\in\bbN:
    \\
    & \qquad \mbox{for all } 1\le j\le r-1:
    \\
    & \qquad\qquad \mbox{for all } \bm{k}\in\N_{\bm{K}-\bm{e}_j}^{r-1} \mbox{ with } |\bm{k}|=d:
    \\
    & \qquad\qquad \qquad 
     \delta_{\bm{k},j}:= \widehat{b}_{\bm{k}-\bm{e}_j,j} - b_{\bm{k},j} + \delta_{\bm{k}-\bm{e}_j,j};
    \\
    & \qquad \mbox{for all } 1\le j\le r-1:
    \\
    & \qquad\qquad \mbox{for all } \bm{k}\in\N_{\bm{K}}^{r-1} \mbox{ with } k_j>0, |\bm{k}|=d:  
    \\
    & \qquad\qquad\qquad
    \widehat{a}_{\bm{k},j} =
    \begin{cases}  
    a_{\bm{k},j}\frac{\delta_{\bm{k},j}}{\delta_{\bm{k}-\bm{e}_j,j}} & \mbox{if } k_j < N_j, \\ 
    0 & \mbox{if } k_j = N_j;
    \end{cases}
    \\
    & \qquad \mbox{for all } 1\le j\le r-1:
    \\
    & \qquad\qquad \mbox{ for all } \bm{k}\in\N_{\bm{K}-\bm{e}_j}^{r-1} \mbox{ with } |\bm{k}|=d: 
    \\
    & \qquad\qquad\qquad
    \widehat{b}_{\bm{k},j} = b_{\bm{k},j} + \frac{\sum_{i=1}^{r-1} \widehat{a}_{\bm{k},i} - \sum_{i=1}^{r-1} a_{\bm{k}+\bm{e}_j,i}}{\delta_{\bm{k},j}}.
\end{align*}
\end{thm}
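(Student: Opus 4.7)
The plan is to realize this algorithm as a specialization of the general CC algorithm of Theorem~\ref{thm:NNCC} to the augmented system $\bm{\mu}:=(\bm{\nu},\delta_{z_0})\in\mathcal{L}^r$, where $\delta_{z_0}[x^n]:=z_0^n$ lies in $\mathcal{L}_1$ with unique monic orthogonal polynomial $\Phi(x)=x-z_0$. First I would verify that $\bm{\mu}$ is perfect in the extended sense: $\bm{\nu}$ is perfect by hypothesis (giving normality of every $(\bm{k},0)$), while Theorem~\ref{thm:ChrMOPRLII} combined with the assumption that $z_0$ is not a root of any $P_{\bm{k}}$ makes $\widehat{\bm{\nu}}$ perfect; Theorem~\ref{thm:ct moprl} then lifts normality of $\bm{k}$ for $\widehat{\bm{\nu}}$ to normality of $(\bm{k},1)$ for $\bm{\mu}$, while indices $(\bm{k},n)$ with $n>1$ can never be normal by Proposition~\ref{rem:perf}.

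Next I would identify how the NNRR coefficients of $\bm{\mu}$ encode those of $\widehat{\bm{\nu}}$. At height $n_r=0$ the recurrences for $\bm{\mu}$ with $j<r$ reduce to those of $\bm{\nu}$, so $a_{(\bm{k},0),j}=a_{\bm{k},j}$ and $b_{(\bm{k},0),j}=b_{\bm{k},j}$. At height $n_r=1$ Theorem~\ref{thm:zero coefficients finite systems} gives $a_{(\bm{k},1),j}=\widehat{a}_{\bm{k},j}$, $b_{(\bm{k},1),j}=\widehat{b}_{\bm{k},j}$ for $j<r$, together with $a_{(\bm{k},1),r}=0$. The remaining piece $b_{(\bm{k},0),r}$ is carried through the recursion via $\delta_{\bm{k},j}:=\delta_{(\bm{k},0),j,r}=b_{(\bm{k},0),r}-b_{(\bm{k},0),j}$, which is all that ever appears downstream. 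For the base case, the orthogonality $\jap{P_{(\bm{0},1)},1}_r=0$ forces $P_{(\bm{0},1)}(x)=x-z_0$, so the NNRR at $(\bm{0},0)$ yields $b_{(\bm{0},0),r}=z_0$, i.e.\ $\delta_{\bm{0},j}=z_0-b_{\bm{0},j}$; the formula for $\widehat{b}_{\bm{0},j}$ then follows by specializing~\eqref{eq:CC2.3} at $\bm{k}=\bm{0}$, where the only surviving $a$-term on the right is $a_{\bm{e}_j,j}$.

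With these identifications, the three update rules (for $\delta_{\bm{k},j}$, $\widehat{a}_{\bm{k},j}$, and $\widehat{b}_{\bm{k},j}$) are the direct translations of the compatibility relations~\eqref{eq:CC1.3}--\eqref{eq:CC3.3} restricted to the slice $l=r$, and the claim follows by induction on $d=|\bm{k}|$. The main subtlety---the expected obstacle---is the boundary case $k_j=N_j<\infty$ where one must declare $\widehat{a}_{\bm{k},j}=0$ rather than apply~\eqref{eq:CC3.3} directly; this is justified by Theorem~\ref{thm:zero coefficients finite systems}\,i) applied to the system $\bm{\mu}$ (or equivalently by the convention in Remark~\ref{rem:perfectAB}), and is what produces the piecewise definition of $\widehat{a}_{\bm{k},j}$ in the algorithm. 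Well-definedness of every division by $\delta_{\bm{k},j}$ is guaranteed by Lemma~\ref{lem:normality vs recurrence}\,b): normality of $(\bm{k},0)$, $(\bm{k}+\bm{e}_j,0)$ and $(\bm{k},1)$ together with perfectness of $\bm{\mu}$ yields $b_{(\bm{k},0),j}\ne b_{(\bm{k},0),r}$, i.e.\ $\delta_{\bm{k},j}\ne 0$, precisely in the range where the algorithm divides by it.
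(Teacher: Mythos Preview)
Your proposal is correct and follows precisely the approach taken in the paper: specialize the CC algorithm of Theorem~\ref{thm:NNCC} to the augmented system $\bm{\mu}=(\bm{\nu},\delta_{z_0})$, identify the coefficients at heights $0$ and $1$ via Theorem~\ref{thm:zero coefficients finite systems}, and read off the update rules from the compatibility relations \eqref{eq:CC1.3}--\eqref{eq:CC3.3}. In fact the paper states this theorem without a separate proof, relying on the discussion leading up to \eqref{eq:CC1.3}--\eqref{eq:CC3.3}; your write-up makes the perfectness of $\bm{\mu}$, the boundary case $k_j=N_j$, and the non-vanishing of $\delta_{\bm{k},j}$ more explicit than the paper does.
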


Another example one often wants to consider is the two-step Christoffel transform of $\nu\in\mathcal{M}$ with $\Phi(z) = (x-z_0)(x-\bar{z}_0)$ where $z_0\in \C\setminus\bbR$.
The most obvious choice of the multiple orthogonality system $(\nu,\tfrac12 \delta_{z_0} +\tfrac12\delta_{\bar{z}_0})$ might not be perfect however. For example,  
if $\nu$ is symmetric with respect to $\operatorname{Re} z_0$ and $n$ is odd, then $P_n(x)$, the degree $n$ orthogonal polynomial of $\nu$, satisfies
all the orthogonality conditions for $(\nu,\tfrac12 \delta_{z_0} +\tfrac12\delta_{\bar{z}_0})$ at the location $(n,1)$, which shows that $(n,1)$ is not normal in that case. Instead, let us consider the multiple orthogonality system system $(\nu, \omega)$, where $\omega\in\mathcal{L}_2$ given by
    \begin{equation}
        \omega = w_0 \delta_{z_0} +(1-w_0)\delta_{\bar{z}_0}
    \end{equation}
    with $w_0\in\bbR\setminus\{0,\tfrac12,1\}$. Indices $(n,0)$ and $(n,2)$ are  normal  for all $n$  since $\nu$ and its transform are both in $\mathcal{M}$. Suppose that some index $(n,1)$ is not normal. Then $P_n(x)$ must be one of the type II polynomials at index $(n,1)$, implying
    \begin{equation}
        0=\omega[P_n(x)] = w_0 P_n(z_0) + (1-w_0) \overline{P_n(z_0)}.
    \end{equation}
    The latter equality implies $\operatorname{Re} P_n(z_0) = 0$ and $(2w_0-1)\operatorname{Im} P_n(z_0) = 0$. Hence $P_n(z_0)=P_n(\bar{z}_0)=0$, but then $P_n$ solves all orthogonality relations with respect to the index $(n,2)$, which contradicts the normality of $(n,2)$. This proves that $(n,1)$ is normal and therefore $(\nu, \omega)$ is perfect for any $w_0\in\bbR\setminus\{0,\tfrac12,1\}$. In order to run the CC algorithm with $(\nu,\omega)$ for computing the Jacobi coefficients of $\widehat{\nu}$, one needs to know the Jacobi coefficients~\eqref{eq:jacobi} of $\omega$. Elementary calculations show that these are
    \begin{align}
        b_0 & = x_0 - iy_0 (1-2w_0),
        \\
        b_1 & = x_0 + iy_0 (1-2w_0), 
        \\ 
        a_1 & = -4 w_0(1-w_0) y_0^2,
    \end{align}
    where $x_0 = \operatorname{Re} z_0$, $y_0 = \operatorname{Im} z_0$. Note that all the arguments used here work just as well in the more general case $\bm{\nu} \in \mathcal{M}^{r-1}$ with $r-1>2$ except that the perfectness of $\nu$ and of $\widehat{\nu}$ is no longer given for free (but this easily holds if we assume that
    $\bm{\nu}$ is an Angelesco or an AT system, for example). 

\subsection{An example on the step-line}
\hfill\\

In this section we work with a perfect system of two functionals $\bm{\nu} = (\mu_1,\mu_2)$ and write $p_n$ for the polynomials defined by $p_{2n} = P_{n,n}$ and $p_{2n+1} = P_{n+1,n}$ for $n \in \N$. $\set{p_n}_{n = 0}^{\infty}$ are called the multiple orthogonal polynomials on the step-line. In the previous sections we have worked with recurrence coefficients from the nearest neighbour recurrence relation, but another recurrence relation restricted to the step-line is also commonly used. 

We assume indices on the step-line $\set{(n,n)}_{n \in \N}\cup\set{(n+1,n)}_{n \in \N}$ are normal. The step-line recurrence relation is then given by
\begin{equation}\label{eq:stepline recurrence}
    xp_n(x) = p_{n+1}(x) + c_n p_n(x) + b_n p_{n-1}(x) + a_n p_{n-2}(x).
\end{equation}
If we put $a_0 = a_1 = 0$ and $b_1 = 0$ then the recurrence relation holds for any $n \in \N$ (with any choice of $p_{-1}$ and $p_{-2}$). To see that \eqref{eq:stepline recurrence} holds, simply choose $b_n$ and $c_n$ such that the polynomial $xp_n(x) - p_{n+1}(x) - c_n p_n(x) - b_n p_{n-1}(x)$ is of minimal degree, and then verify that it satisfies all the orthogonality conditions for the polynomial $p_{n-2}$. 

We put $\Phi(x) = x - z_0$ and assume that $p_n(z_0) \neq 0$ for all $n \in \N$. A slight modification of Theorem \ref{thm:ChrMOPRLII} 
shows that $\widehat{\nu} = \Phi\nu$ has all indices normal on the step-line. The polynomials $\widehat{p}_n$ on the step-line then satisfy the recurrence relation
\begin{equation}\label{eq:stepline recurrence ct}
    x\widehat{p}_n(x) = \widehat{p}_{n+1}(x) + \widehat{c}_n \widehat{p}_n(x) + \widehat{b}_n \widehat{p}_{n-1}(x) + \widehat{a}_n \widehat{p}_{n-2}(x).
\end{equation}

Let us show how to compute the recurrence coefficients $\widehat{a}_n$, $\widehat{b}_n$, and $\widehat{c}_n$, assuming we are given $a_n$, $b_n$, and $c_n$ for each $n \in \N$. Note that \cite{Computing NNR} describes how to compute the step-line recurrence coefficients from the nearest neighbour recurrence coefficients (see~\cite[Sect 2.2]{DFK} for the same problem for an arbitrary increasing path of indices). Here, however, we would like to go directly from the step-line coefficients of $\bm{\nu}$ to the step-line coefficients of $\widehat{\bm{\nu}}$.

For the system $\bm{\mu} = (\mu_1,\mu_2,\delta_{z_0})$ we have $P_{n,m,0}(x) = P_{n,m}(x)$ and $P_{n,m,1}(x) = (x-z_0)\widehat{P}_{n,m}(x)$. When $(n,m)$ is on the step-line we have the recurrence relations \eqref{eq:stepline recurrence} and $\eqref{eq:stepline recurrence ct}$. We also have the recurrence relation \eqref{eq:nnr cor}, which turns into 
\begin{equation}\label{eq:nnr cor stepline}
    p_{n+1}(x) - (x-z_0)\widehat{p}_n(x) = \delta_{n}p_{n}(x).
\end{equation}

We proceed by computing $(x-z_0)\widehat{p}_{n+1}(x)$ using \eqref{eq:stepline recurrence}, \eqref{eq:stepline recurrence ct}, and \eqref{eq:nnr cor stepline}, in two different ways (similarly to the proof of Theorem \ref{thm:compatibility conditions} in \cite{NNR}). First, we have 
\m{(x-z_0)\widehat{p}_{n+1} = p_{n+2} - \delta_{n+1} p_{n+1} = xp_{n+1} - (c_{n+1}+\delta_{n+1})p_{n+1} - b_{n+1}p_n - a_{n+1}p_{n-1}.}
On the other hand, we have
\m{(x-z_0)\widehat{p}_{n+1} & = x(x-z_0)\widehat{p}_n - \widehat{c}_n(x-z_0)\widehat{p}_n - \widehat{b}_n(x-z_0)\widehat{p}_{n-1} - \widehat{a}_n(x-z_0)\widehat{p}_{n-2} \\ & = x(p_{n+1} - \delta_n p_n) - \widehat{c}_n(p_{n+1} - \delta_{n}p_n) - \widehat{b}_n(p_n - \delta_{n-1}p_{n-1}) - \widehat{a}_n(p_{n-1} - \delta_{n-2}p_{n-2}) \\ & = xp_{n+1} -\delta_n(p_{n+1} + c_np_n + b_np_{n-1} + a_np_{n-2}) - \widehat{c}_np_{n+1} - (\widehat{b}_n - \delta_n\widehat{c}_n)p_n \\ & \qquad \qquad \qquad \qquad \qquad \qquad \qquad \qquad \qquad \qquad - (\widehat{a}_n - \delta_{n-1}\widehat{b}_n)p_{n-1} + \delta_{n-2}\widehat{a}_np_{n-2} \\ & = xp_{n+1} - (\delta_n + \widehat{c}_{n})p_{n+1} - (\delta_nc_n + \widehat{b}_n - \delta_n\widehat{c}_n)p_n - (\delta_nb_n + \widehat{a}_n - \delta_{n-1}\widehat{b}_n)p_{n-1} \\ & \qquad \qquad \qquad \qquad \qquad \qquad \qquad \qquad \qquad \qquad - (\delta_na_n - \delta_{n-2}\widehat{a}_n)p_{n-2}.}
By comparing the two results, using liner independence, we get the following result.

\begin{thm}\label{thm:compatibility conditions stepline}
    If all indices on the step-line are normal for $\bm{\nu}$ and $\widehat{\bm{\nu}}$, then 
    \begin{align}
\label{eq:CC1 stepline}
    & \widehat{c}_{n} - \delta_{n+1} = c_{n+1} - \delta_n,\\
    \label{eq:CC2 stepline}
    & \widehat{b}_n - \delta_n\widehat{c}_n = b_{n+1} - \delta_nc_n, \\
    \label{eq:CC3 stepline}
    & \widehat{a}_n - \delta_{n-1}\widehat{b}_n = a_{n+1} - \delta_nb_n, \\
    \label{eq:CC4 stepline}
    & \delta_{n-2}\widehat{a}_n = \delta_na_n.
\end{align}
If $n = 0$ we only get the first two equations, and if $n = 1$ we only get the first three equations.
\end{thm}

We now get the following computation algorithm. 

\begin{thm}\label{thm:NNCC stepline}
Suppose all indices on the step-line are normal for $\bm{\nu} = (\mu_1,\mu_2)$ and $\widehat{\bm{\nu}} = (x-z_0)\bm{\nu}$. Given the step-line coefficients of $\bm{\nu}$, the following algorithm produces all step-line coefficients of $\widehat{\bm{\nu}}$: 
\begin{align*}
    \begin{split}
        & \delta_{0}:= z_0 - c_{0};
    \\
    & \widehat{a}_{0}=0;
    \\
    & \widehat{b}_{0}=0;
    \\
    & \widehat{c}_{0}= c_{0} - \frac{b_1}{\delta_{0}};
    \\
    & \delta_1 = \widehat{c}_0 - c_1 + \delta_0;
    \\
    & \widehat{a}_{1}=0;
    \\
    & \widehat{b}_{1} = \frac{\delta_1 b_1 - a_2}{\delta_0};
    \\
    & \widehat{c}_1 = c_1 + \frac{\widehat{b}_1-b_2}{\delta_1};
    \\
    & \mbox{for all } n \geq 2:
    \\
    & \qquad
     \delta_{n}:= \widehat{c}_{n-1} - c_{n} + \delta_{n-1};
    \\
    & \qquad
         \widehat{a}_{n} = a_{n}
    \frac{\delta_{n}}{\delta_{n-2}};
    \\
    & \qquad
    \widehat{b}_{n} = \frac{\delta_nb_n + \widehat{a}_n - a_{n+1}}{\delta_{n-1}};
    \\
    & \qquad 
    \widehat{c}_n = c_n + \frac{\widehat{b}_n - b_{n+1}}{\delta_n}.
    \end{split}
\end{align*}
\end{thm}

\begin{proof}
    for $n = 0$, \eqref{eq:nnr cor stepline} turns into $(x-c_0) - (x - z_0) = \delta_0$, which allows us to compute $\delta_0$. The rest clearly follows from Theorem \ref{thm:compatibility conditions stepline}. We can divide by $\delta_n$ since $\delta_n \neq 0$ by Lemma \ref{lem:normality vs recurrence} b).
\end{proof}

\subsection{Christoffel Transforms of Type I polynomials}\label{ss:CT type 1}
\hfill\\

We write $\bm{A}_{\bm{k}} = (A_{\bm{k}}^{(1)},\dots,A_{\bm{k}}^{(r-1)})$ and $\widehat{\bm{A}}_{\bm{k}} = (\widehat{A}_{\bm{k}}^{(1)},\dots,\widehat{A}_{\bm{k}}^{(r-1)})$ for the type I polynomials with respect to $\bm{\nu}$ and $\widehat{\bm{\nu}}$, respectively. We get similar results to the previous two sections here. However, in this case the matrix is larger and the sequence of indices requires the extra constraints $\bm{s}_j \leq m\bm{1}$ where $m\bm{1} = (m,\dots,m) \in \N_{}^{r-1}$. 
{For simplicity we assume that $\bm{\nu} \in \mathcal{L}_{\infty}^{r-1}$ and all zeros of $\Phi$ are simple, ignoring the details of the special cases discussed in Remark \ref{rem:modification}-\ref{rem:modification 2}.}

\begin{thm}\label{thm:det formula type I}

Let $\bm{\nu}$ be a perfect system and $\bm{k} \in \N_{\bm{K}}^{r-1}$. Let $\set{\bm{k}+\bm{s}_j}_{j = 0}^{(r-1)m}$ be a sequence of $\N_{}^{r-1}$-indices where $\abs{\bm{s}_j} = j$ and $\bm{s}_j \in \N_{m\bm{1}}^{r-1}$ for each $j = 0,1,\dots,(r-1)m$. 
Consider the determinant
\nm{eq:Dn type 1}{D_{\bm{k}} = \det{
\begin{pmatrix}
\bm{A}_{\bm{k}+\bm{s}_1}(z_1) & \bm{A}_{\bm{k} + \bm{s}_2}(z_1) & \cdots & \bm{A}_{\bm{k} + \bm{s}_{(r-1)m}}(z_1) \\
\bm{A}_{\bm{k}+\bm{s}_1}(z_2) & \bm{A}_{\bm{k} + \bm{s}_2}(z_2) & \cdots & \bm{A}_{\bm{k} + \bm{s}_{(r-1)m}}(z_2) \\
\vdots & \vdots & \ddots & \vdots \\
\bm{A}_{\bm{k}+\bm{s}_1}(z_m) & \bm{A}_{\bm{k} + \bm{s}_2}(z_m) & \cdots & \bm{A}_{\bm{k} + \bm{s}_{(r-1)m}}(z_m) \end{pmatrix},
}}
where $\bm{A}_{\bm{k}+\bm{s}_j}(z_k)$ denotes the column vector with elements $A_{\bm{k}+\bm{s}_j}^{(1)}(z_k),\dots,A_{\bm{k}+\bm{s}_j}^{(r-1)}(z_k)$. 

If $(\bm{k},m)$ is normal then $D_{\bm{k}} \neq 0$ and the following determinantal formula holds
\nm{eq:type 1 determinantal formula}{
A_{{(}\bm{k},m{)}}^{(j)}(x) = \widehat{A}_{\bm{k}}^{(j)}(x) = {\Phi(x)}^{-1}D_{\bm{k}}^{-1}
\det{
\begin{pmatrix}
A_{\bm{k}}^{(j)}(x) & A_{\bm{k} + \bm{s}_1}^{(j)}(x) & \cdots & A_{\bm{k} + \bm{s}_{(r-1)m}}^{(j)}(x) \\
\bm{A}_{\bm{k}}(z_1) & \bm{A}_{\bm{k} + \bm{s}_1}(z_1) & \cdots & \bm{A}_{\bm{k} + \bm{s}_{(r-1)m}}(z_1) \\
\vdots & \vdots & \ddots & \vdots \\
\bm{A}_{\bm{k}}(z_m) & \bm{A}_{\bm{k} + \bm{s}_1}(z_m) & \cdots & \bm{A}_{\bm{k} + \bm{s}_{(r-1)m}}(z_m) 
\end{pmatrix}
}, \\ 1 \leq j \leq r-1.
}
\end{thm}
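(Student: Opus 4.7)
The plan is to identify the polynomial tuple defined by the right-hand side of~\eqref{eq:type 1 determinantal formula} with $\widehat{\bm{A}}_{\bm{k}}$, exploiting uniqueness of the type I polynomial at the normal index $\bm{k}$ for $\widehat{\bm{\nu}}$, and then to argue that $D_{\bm{k}}\ne 0$. Theorem~\ref{thm:ct moprl} already tells us that normality of $(\bm{k},m)$ for $\bm{\mu}$ is equivalent to normality of $\bm{k}$ for $\widehat{\bm{\nu}}$, so $\widehat{\bm{A}}_{\bm{k}}$ exists and is unique.

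First I would establish the preliminary identity $A_{(\bm{k},m)}^{(j)} = \widehat{A}_{\bm{k}}^{(j)}$ for $j=1,\dots,r-1$. Starting from the type I orthogonality of $\bm{A}_{(\bm{k},m)}$ applied to test polynomials of the form $x^p\Phi$ (of degree $\le|\bm{k}|+m-2$ for $p\le|\bm{k}|-2$), the $j=r$ summand $\langle A_{(\bm{k},m)}^{(r)},x^p\Phi\rangle_r$ vanishes by Lemma~\ref{lem:finite rank} since $\Phi=P_{N_r}$ is orthogonal to every polynomial with respect to $\mu_r$. What survives is $\sum_{j=1}^{r-1}\langle A_{(\bm{k},m)}^{(j)}\Phi,x^p\rangle_j=0$, i.e., the $\widehat{\bm{\nu}}$-orthogonality at $\bm{k}$. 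Taking $p=|\bm{k}|-1$ and using that $\Phi$ is monic together with $\sum_j\langle A_{(\bm{k},m)}^{(j)},x^{|\bm{k}|+m-1}\rangle_j=1$ (and the orthogonality to the lower-order pieces of $x^{|\bm{k}|-1}\Phi$) recovers the correct type I normalization, and uniqueness of the normalized type I at the normal index $\bm{k}$ forces the identification.

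Next I would verify that the tuple $\bm{Q}$ given by $\Phi(x)Q_j(x):=\det M^{(j)}(x)$ (the determinant in~\eqref{eq:type 1 determinantal formula} without the $D_{\bm{k}}^{-1}$ prefactor) is a type I candidate for $\widehat{\bm{\nu}}$ at $\bm{k}$, whose normalization sum equals exactly $D_{\bm{k}}$. That $Q_j$ is a polynomial of degree at most $k_j-1$ follows because $\det M^{(j)}(x)$ vanishes at each $z_l$ (the first row collides with one of the block rows), hence $\Phi$ divides it, together with an entry-wise degree count. For the orthogonality, I would exploit linearity of the determinant in its first row: $\sum_{j=1}^{r-1}\langle Q_j\Phi,x^p\rangle_j$ equals the determinant with its first row replaced by $\bigl(\sum_j\langle A_{\bm{k}+\bm{s}_i}^{(j)},x^p\rangle_j\bigr)_i$. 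For $p\le|\bm{k}|-2$ every entry is zero (since $|\bm{k}+\bm{s}_i|\ge|\bm{k}|$), so the determinant vanishes. At $p=|\bm{k}|-1$ only the entry at $i=0$ survives (equal to $1$ by the normalization of $\bm{A}_{\bm{k}}$), and expansion along the resulting first row $(1,0,\dots,0)$ yields precisely the minor $D_{\bm{k}}$.

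It follows that $\bm{Q}$ is a (possibly zero) type I solution at $\bm{k}$ for $\widehat{\bm{\nu}}$ with normalization $D_{\bm{k}}$, so by the equivalence preceding Definition~\ref{def:perf} (specifically, the criterion that normality of $\bm{k}$ forces every non-zero type I solution to have non-zero normalization sum) we obtain $\bm{Q}=D_{\bm{k}}\widehat{\bm{A}}_{\bm{k}}$, and the formula will follow from $D_{\bm{k}}\ne 0$. The main obstacle will be ruling out the degenerate case $\bm{Q}\equiv\bm{0}$: expanding $\det M^{(j)}$ along the first row expresses $\bm{Q}\equiv\bm{0}$ as a non-trivial linear relation $\sum_{\ell=0}^{(r-1)m}(-1)^\ell\alpha_\ell\bm{A}_{\bm{k}+\bm{s}_\ell}=\bm{0}$ among type I polynomial vectors (with $\alpha_0=D_{\bm{k}}$), so the hard part reduces to linear independence of the type I vectors at the distinct normal indices $\{\bm{k}+\bm{s}_\ell\}_{\ell=0}^{(r-1)m}$ in a perfect system. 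I would approach this by combining the biorthogonality $\sum_l\langle A^{(l)}_{\bm{n}},P_{\bm{m}}\rangle_l=1$ (valid whenever $|\bm{m}|=|\bm{n}|-1$, for any such $\bm{m}$) with a careful induction on $|\bm{s}_\ell|$, exploiting perfectness of $\bm{\nu}$ so that each $A^{(j)}_{\bm{k}+\bm{s}_\ell}$ attains the full degree $k_j+(\bm{s}_\ell)_j-1$ via Lemma~\ref{lem:small normality lemma 2}.
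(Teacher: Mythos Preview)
Your first two steps are fine and match the paper's argument: the identification $A_{(\bm{k},m)}^{(j)}=\widehat{A}_{\bm{k}}^{(j)}$ via Lemma~\ref{lem:finite rank}, and the verification that the determinant (before dividing by $D_{\bm{k}}$) is a type~I candidate for $\widehat{\bm{\nu}}$ at $\bm{k}$ with normalization sum equal to $D_{\bm{k}}$, are both correct.

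The gap is in your argument for $D_{\bm{k}}\ne 0$. You write that $\bm{Q}\equiv\bm{0}$ yields a \emph{non-trivial} linear relation $\sum_\ell(-1)^\ell\alpha_\ell\,\bm{A}_{\bm{k}+\bm{s}_\ell}=\bm{0}$, to be contradicted by linear independence. But non-triviality is precisely what is at stake: if every cofactor $\alpha_\ell$ vanishes (equivalently, the $(r-1)m\times((r-1)m+1)$ block-evaluation matrix has rank $<(r-1)m$), then $\bm{Q}\equiv\bm{0}$ holds with all coefficients zero, and linear independence of the $\bm{A}_{\bm{k}+\bm{s}_\ell}$ gives you nothing. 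You have not excluded this case, and proving that some $\alpha_\ell\ne 0$ is the same problem as proving $D_{\bm{k}}\ne 0$.

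The paper avoids this circularity by arguing directly from $D_{\bm{k}}=0$. That hypothesis gives a \emph{genuinely} non-trivial dependence $\sum_{j=1}^{(r-1)m}c_j\bm{A}_{\bm{k}+\bm{s}_j}(z_l)=\bm{0}$ among the columns (note: $j\ge 1$ only). Linear independence of the polynomial vectors $\{\bm{A}_{\bm{k}+\bm{s}_j}\}_{j\ge 1}$ (which is easy: pair against $x^{|\bm{k}|+l-1}$ for the smallest $l$ with $c_l\ne 0$ and use the type~I normalization) then shows the combination is not identically zero but vanishes at each $z_l$, hence is divisible by $\Phi$. The constraint $\bm{s}_j\le m\bm{1}$ makes the quotient a legitimate type~I vector at $\bm{k}$ for $\widehat{\bm{\nu}}$ that is \emph{also} orthogonal to $x^{|\bm{k}|-1}$, contradicting normality of $\bm{k}$ for $\widehat{\bm{\nu}}$. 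So the linear independence you identified is indeed the key ingredient, but it must be combined with the vanishing at the $z_l$'s and the divisibility/degree count to produce a normality violation; on its own it does not rule out $\bm{Q}\equiv\bm{0}$.
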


\begin{proof}
In the proof we use the notation $\jap{\bm{A}(x),P(x)}$, for polynomials $P(x)$ and vectors of polynomials $(A^{(1)}(x),\dots,A^{(r-1)}(x))$, to represent $\sum_{j = 1}^{r-1}\jap{A^{(j)}(x),P(x)}_j$. Note that this is clearly bilinear. Then the orthogonality relations for $\bm{A}_{\bm{k}}(x)$ can be rephrased as
\begin{equation}\label{eq:form}
\jap{\bm{A}_{\bm{k}}(x),x^p} = 
0, \qquad p = 0,\dots,\abs{\bm{k}}-2.
\end{equation}

We first prove that $D_{\bm{k}} \neq 0$. $D_{\bm{k}} = 0$ would imply that the columns are linearly dependent. In particular this means that 
\m{\sum_{j = 1}^{(r-1)m}c_j\bm{A}_{\bm{k}+\bm{s}_j}(x) = \bm{0}, \qquad x = z_1,\dots,z_r,}
for some $(c_1,\dots,c_{(r-1)m}) \neq (0,\dots,0)$. On the other hand, we note that the sum cannot vanish for every $x$, since in that case the smallest $l$ such that $c_l \neq 0$ would satisfy
\m{\jap{\sum_{j = 1}^{(r-1)m}c_j\bm{A}_{\bm{k}+\bm{s}_j}(x),x^{{\abs{\bm{k}}}-1+l}}=c_l\jap{\bm{A}_{\bm{k}+\bm{s}_l}(x),x^{{\abs{\bm{k}}}-1+l}}=0.}
Normality of $\bm{k}+\bm{s}_l$ would then imply $c_l = 0$. Hence we have a non-zero vector $\sum_{j = 1}^{(r-1)m}c_j\bm{A}_{\bm{k}+\bm{s}_j}$ which is also divisible by $\Phi$. Write $\bm{A} = (A^{(1)},\dots,A^{(r-1)})$ for $\Phi(x)^{-1}\sum_{j = 1}^{(r-1)m}c_j\bm{A}_{\bm{k}+\bm{s}_j}$. Since $\bm{s}_j \in \N_{m\bm{1}}$ for each $j = 0,1,\dots,(r-1)m$ we have $\deg{A^{(i)}} \leq k_i - 1$. This vector also satisfies 
\m{\jap{\Phi(x)\bm{A}(x),x^{p}} = 0, \qquad p = 0,\dots,\abs{\bm{k}}-1,}
{which implies that $\bm{k}$ is not $\widehat{\bm{\nu}}$-normal.} This contradicts the normality of $(\bm{k},m)$ by Theorem \ref{thm:ct moprl}. Hence we must have $D_{\bm{k}} \neq 0$.

Next, we note that the determinant in \eqref{eq:type 1 determinantal formula} vanishes at $z_1\dots,z_r$, so it is divisible by $\Phi(x)$ (for each $j = 0,\dots,r-1$). {Denote the rightmost expression of~\eqref{eq:type 1 determinantal formula} by $B_{\bm{k}}^{(j)}$ and $\bm{B}_{\bm{k}} = (B^{(1)}_{\bm{k}},\dots,B^{(r-1)}_{\bm{k}})$}. Clearly {$\bm{B}_{\bm{k}}$} is {then} orthogonal  to $x^p$ for $p = 0,\dots,\abs{\bm{k}}-2$ {with respect to $\widehat{\bm{\nu}}$ in the sense of~\eqref{eq:form}}. {Now note that}
\begin{equation}
    {\jap{\bm{B}_{\bm{k}}(x),x^{\abs{\bm{k}}-1}}_{\widehat{\bm{\nu}}} = 
    \jap{D_{\bm{k}}^{-1} D_{\bm{k}}\bm{A}_{\bm{k}}(x),x^{\abs{\bm{k}}-1}}_{{\bm{\nu}}} = {1}.}
\end{equation}
This shows that $\widehat{\bm{A}}_{\bm{k}} = \bm{B}_{\bm{k}}$. Finally,
\m{\jap{\bm{A}_{(\bm{k},m)},x^{p+m}}_{\bm{\mu}} = \jap{\bm{A}_{(\bm{k},m)},x^{p}\Phi(x)}_{\bm{\mu}} = \sum_{j = 1}^{r-1}\jap{\Phi(x)A_{(\bm{k},m)}^{(j)},x^p}_j = 
\begin{cases}
    0, \qquad p = 0,\dots,\abs{\bm{k}}-2, \\
    {1, \qquad p=|\bm{k}|-1,}
\end{cases}
}
by Lemma \ref{lem:finite rank}. Also $\deg{A_{(\bm{k},m)}} \leq k_j-1$, $j = 1,\dots,r-1$. 
{Since $\bm{k}$ is normal for $\widehat{\bm{\nu}}$, we conclude with  $\widehat{A}_{\bm{k}}^{(j)} = A_{(\bm{k},m)}^{(j)}$}. 
\end{proof}

\subsection{Determinantal Formula for Type I Polynomials: one-step case}
\hfill\\

For the case $m = 1$, the determinantal formula in Theorem \ref{thm:det formula type I} is not as simple as in Theorem \ref{thm:ChrMOPRLII}. Rather, we get a formula in terms of $\bm{A}_{\bm{k}},\bm{A}_{\bm{k}+\bm{s}_1},\dots,\bm{A}_{\bm{k}+\bm{s}_{r-1}}$. However, we can get new formulas by deforming the sequence $\set{\bm{k}+\bm{s}_j}_{j = 0}^{r-1}$ using \eqref{eq:nnr cor type 1}. Perhaps the nicest choice is writing the columns in terms of the nearest neighbours, which the following theorem shows is possible, even with very relaxed normality assumptions. 

\begin{thm}\label{thm:nice type I det formula}
    Suppose $\bm{k}$ is normal for both $\bm{\nu}$ and $\widehat{\bm{\nu}}=(x-z_0)\bm{\nu}$. Consider the matrix
    \nm{eq:type 1 det formula alternative normalization}{D_{\bm{k}} = \det\begin{pmatrix}
A_{\bm{k}+\bm{e}_1}^{(1)}(z_0) & A_{\bm{k} + \bm{e}_2}^{(1)}(z_0) & \cdots & A_{\bm{k} + \bm{e}_{r-1}}^{(1)}(z_0) \\
A_{\bm{k}+\bm{e}_1}^{(2)}(z_0) & A_{\bm{k} + \bm{e}_2}^{(2)}(z_0) & \cdots & A_{\bm{k} + \bm{e}_{r-1}}^{(2)}(z_0) \\
\vdots & \vdots & \ddots & \vdots \\
A_{\bm{k}+\bm{e}_1}^{(r-1)}(z_0) & A_{\bm{k} + \bm{e}_2}^{(r-1)}(z_0) & \cdots & A_{\bm{k} + \bm{e}_{r-1}}^{(r-1)}(z_0) \end{pmatrix}.}
Then ${D_{\bm{k}}} \neq 0$ and the following determinantal formula holds
\nm{eq:type 1 det formula alternative version}{\widehat{A}_{\bm{k}}^{(j)}(x) = (x-z_0)^{-1}D_{\bm{k}}^{-1}\det{\begin{pmatrix}
A_{\bm{k}}^{(j)}(x) & A_{\bm{k}+\bm{e}_1}^{(j)}(x) & \cdots & A_{\bm{k}+\bm{e}_{r-1}}^{(j)}(x) \\
A_{\bm{k}}^{(1)}(z_0) & A_{\bm{k}+\bm{e}_1}^{(1)}(z_0) & \cdots & A_{\bm{k} + \bm{e}_{r-1}}^{(1)}(z_0) \\
\vdots & \vdots & \ddots & \vdots \\
A_{\bm{k}}^{(r-1)}(z_0) & A_{\bm{k}+\bm{e}_1}^{(r-1)}(z_0) & \cdots & A_{\bm{k} + \bm{e}_{r-1}}^{(r-1)}(z_0) \end{pmatrix}}, \qquad 1 \leq j \leq r-1.}
\end{thm}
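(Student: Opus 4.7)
The plan is to set $\bm{B}_{\bm{k}} = (B_{\bm{k}}^{(1)},\ldots,B_{\bm{k}}^{(r-1)})$ to equal the right-hand side of~\eqref{eq:type 1 det formula alternative version} and then identify it with $\widehat{\bm{A}}_{\bm{k}}$. Since $\bm{k}$ is normal for $\widehat{\bm{\nu}}$, the type I polynomial at this index is unique, so it is enough to check three properties of $\bm{B}_{\bm{k}}$: that each $B_{\bm{k}}^{(j)}$ is a polynomial of degree at most $k_j-1$; that $\bm{B}_{\bm{k}}$ satisfies the type I orthogonality relations for $\widehat{\bm{\nu}}$; and that $\bm{B}_{\bm{k}}$ carries the correct normalization~\eqref{eq:typeInorm}. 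One also needs $\det D_{\bm{k}}\neq 0$, which will be the main technical point.

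For the first three properties, observe that substituting $x = z_0$ into the $r\times r$ determinant in~\eqref{eq:type 1 det formula alternative version} makes its top row coincide with its $(j+1)$-th row, so the determinant is divisible by $(x-z_0)$ and $B_{\bm{k}}^{(j)}$ is indeed a polynomial; the only top-row entry of degree exceeding $k_j-1$ is $A_{\bm{k}+\bm{e}_j}^{(j)}(x)$, of degree $k_j$, so after dividing by $(x-z_0)$ one has $\deg B_{\bm{k}}^{(j)}\le k_j-1$. I would then expand this determinant along its top row as $\sum_{\alpha=0}^{r-1}(-1)^\alpha c_\alpha A_{\bm{k}+\bm{s}_\alpha}^{(j)}(x)$, where $\bm{s}_0=\bm{0}$, $\bm{s}_\alpha=\bm{e}_\alpha$ for $\alpha\ge 1$, and the scalars $c_\alpha$ are the $(r-1)\times(r-1)$ minors obtained by deleting the top row and the $\alpha$-th column; in particular $c_0 = D_{\bm{k}}$. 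Taking the inner product with $x^p$ against $\widehat{\mu}_j=(x-z_0)\mu_j$ and summing over $j$ gives $D_{\bm{k}}^{-1}\sum_\alpha(-1)^\alpha c_\alpha \sum_{j=1}^{r-1}\jap{A_{\bm{k}+\bm{s}_\alpha}^{(j)}(x),x^p}_j$, and each inner sum vanishes for all $p\le \abs{\bm{k}+\bm{s}_\alpha}-2$ by type I orthogonality of $\bm{A}_{\bm{k}+\bm{s}_\alpha}$; in particular all terms vanish when $p\le \abs{\bm{k}}-2$. For $p=\abs{\bm{k}}-1$ only the $\alpha=0$ term survives (since $\abs{\bm{k}}-1 = \abs{\bm{k}+\bm{e}_\alpha}-2$ for $\alpha\ge 1$), yielding $D_{\bm{k}}^{-1}\cdot D_{\bm{k}}\cdot 1 = 1$ by the type I normalization of $\bm{A}_{\bm{k}}$. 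This verifies orthogonality and normalization.

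The hard part is confirming $\det D_{\bm{k}}\neq 0$. Assume otherwise: there exist $c_1,\ldots,c_{r-1}$, not all zero, with $\sum_\alpha c_\alpha \bm{A}_{\bm{k}+\bm{e}_\alpha}(z_0)=\bm{0}$. By Lemma~\ref{lem:nnr lem 2} applied to $\bm{\nu}$ (using normality of $\bm{k}$), the vectors $\{\bm{A}_{\bm{k}+\bm{e}_\alpha}\}_{\alpha=1}^{r-1}$ are linearly independent as vectors of polynomials, so $\bm{C}(x):=\sum_\alpha c_\alpha\bm{A}_{\bm{k}+\bm{e}_\alpha}(x)$ is nonzero. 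By construction $\bm{C}$ vanishes at $z_0$ componentwise, hence $\bm{C}(x)=(x-z_0)\bm{B}(x)$ with $\bm{B}\neq\bm{0}$ and $\deg B^{(i)}\le k_i-1$. Using the type I orthogonality of each $\bm{A}_{\bm{k}+\bm{e}_\alpha}$ up to degree $\abs{\bm{k}+\bm{e}_\alpha}-2=\abs{\bm{k}}-1$, we obtain $\sum_i\jap{B^{(i)}(x),x^p}_{\widehat\mu_i}=\sum_i\jap{C^{(i)}(x),x^p}_i=0$ for every $p\le\abs{\bm{k}}-1$. Thus $\bm{B}$ is a nonzero type I solution for $\widehat{\bm{\nu}}$ at index $\bm{k}$ which further satisfies $\sum_i\jap{B^{(i)}(x),x^{\abs{\bm{k}}-1}}_{\widehat\mu_i}=0$, violating characterization (iv) of normality of $\bm{k}$ for $\widehat{\bm{\nu}}$. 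This contradiction completes the proof.
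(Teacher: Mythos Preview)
Your proof is correct and follows essentially the same approach as the paper: both prove $D_{\bm{k}}\neq 0$ by the identical contradiction argument via Lemma~\ref{lem:nnr lem 2} and normality of $\bm{k}$ for $\widehat{\bm{\nu}}$, and both verify the determinantal formula by checking degree, divisibility by $(x-z_0)$, orthogonality, and normalization (the paper defers these checks to the proof of Theorem~\ref{thm:det formula type I}, while you spell them out directly).
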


\begin{proof}
    If $D_{\bm{k}} = 0$ then for some non-trivial linear combination,
    \m{\sum_{j = 1}^{r-1}c_k\bm{A}_{\bm{k}+\bm{e}_j}(z_0) = 0, \qquad j = 1,\dots,r-1.}
    Then define the vector $\bm{A} = (A^{(1)},\dots,A^{(r-1)})$ by 
    \m{\bm{A}(x) = (x-z_0)^{-1}\sum_{j = 1}^{r-1}c_j\bm{A}_{\bm{k}+\bm{e}_j}(x).}
    Clearly $\deg A^{(i)} \leq k_i - 1$, $i = 1,\dots,r-1$, and $\bm{A}$ satisfies the orthogonality conditions 
    \m{\sum_{i = 1}^{r-1}\jap{(x-z_0)A^{(i)}(x),x^p}_i = 0, \qquad p = 0,\dots,\abs{\bm{k}}-1.}
    Since $\bm{A} \neq \bm{0}$ by Lemma \ref{lem:nnr lem 2} we get a contradiction with the normality of the index $\bm{k}$ for the system $\widehat{\bm{\nu}}$. We conclude that $D_{\bm{k}} \neq 0$, and \eqref{eq:type 1 det formula alternative version} now follows from similar arguments as in the proof of Theorem \ref{thm:det formula type I}.
\end{proof}


\begin{rem}\label{rem:alternative sequence}
    We can get a generalized version of Theorem \ref{thm:nice type I det formula} for arbitrary $m$, by considering a determinantal formula in terms of $A_{\bm{k}}^{(j)}$ and $A_{\bm{k}+l\bm{e}_i}^{(j)}$ for each $l = 1,\dots,m$ and $i = 1,\dots,r-1$ 
    (although here we will need
    to assume more indices than just $\bm{k}$ to be normal). This can also be seen through repeated elementary row operations in the determinant in \eqref{eq:type 1 determinantal formula} combined with the relations in \eqref{eq:nnr cor type 1}.
\end{rem}

We can also relate the one-step Christoffel transform to the kernel polynomials \eqref{eq:cd kernel moprl}, similarly to \eqref{eq:cd one measure}, by the following result.

\begin{thm}
Suppose $\bm{k}$ is normal for both $\bm{\nu}$ and $\widehat{\bm{\nu}}=(x-z_0)\bm{\nu}$. Then 
$P_{\bm{k}}(z_0) \neq 0$ and
    \nm{eq:cd kernel christoffel moprl}{\bm{K}_{\bm{k}}(z_0,x) = -P_{\bm{k}}(z_0)\widehat{\bm{A}}_{\bm{k}}(x).}
\end{thm}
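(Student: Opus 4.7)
The plan splits into two pieces: first show $P_{\bm{k}}(z_0)\ne 0$, then identify $-\bm{K}_{\bm{k}}(z_0,\cdot)/P_{\bm{k}}(z_0)$ with $\widehat{\bm{A}}_{\bm{k}}(\cdot)$ via uniqueness of the type I multiple orthogonal polynomial. For the non-vanishing, if $P_{\bm{k}}(z_0)=0$ then $Q(x):=P_{\bm{k}}(x)/(x-z_0)$ is a polynomial of degree $|\bm{k}|-1$, and for each $i$ and $p\le k_i-1$ one has $\jap{Q,x^p}_{\widehat{\mu}_i}=\mu_i[(x-z_0)Q(x)x^p]=\jap{P_{\bm{k}},x^p}_i=0$, producing a non-zero type II solution at $\bm{k}$ of degree $<|\bm{k}|$, contradicting the normality of $\bm{k}$ for $\widehat{\bm{\nu}}$.

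For the main identity, apply the Christoffel--Darboux formula \eqref{eq:cd formula moprl} with the roles of the two arguments swapped and the first argument specialized to $z_0$. This yields
\begin{equation*}
(x-z_0)\,\bm{K}_{\bm{k}}(z_0,x) \;=\; -P_{\bm{k}}(z_0)\,\bm{A}_{\bm{k}}(x) \;+\; \sum_{j=1}^{r-1} a_{\bm{k},j}\,P_{\bm{k}-\bm{e}_j}(z_0)\,\bm{A}_{\bm{k}+\bm{e}_j}(x).
\end{equation*}
Writing the right-hand side as $-P_{\bm{k}}(z_0)\bm{B}(x)$, the left-hand side vanishes at $x=z_0$, so $\bm{B}(x)$ is divisible by $(x-z_0)$; set $\bm{D}(x):=\bm{B}(x)/(x-z_0)$. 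I claim $\bm{D}(x)=\widehat{\bm{A}}_{\bm{k}}(x)$.

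The degree constraint is immediate: $\deg B^{(i)}\le k_i$, hence $\deg D^{(i)}\le k_i-1$. For the orthogonality, use $\jap{f,g}_{\widehat{\mu}_i}=\mu_i[(x-z_0)fg]$, which gives $\sum_i\jap{D^{(i)},x^p}_{\widehat{\mu}_i}=\sum_i\jap{B^{(i)},x^p}_{\mu_i}$. For $0\le p\le|\bm{k}|-2$, both $\sum_i\jap{A^{(i)}_{\bm{k}},x^p}_i$ and $\sum_i\jap{A^{(i)}_{\bm{k}+\bm{e}_j},x^p}_i$ vanish by the type I orthogonality relations \eqref{eq:moprlI} (the latter because $p\le|\bm{k}|-1=|\bm{k}+\bm{e}_j|-2$), so that sum is $0$. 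For $p=|\bm{k}|-1$, the first contribution equals $1$ by type I normalization of $\bm{A}_{\bm{k}}$, while each $\sum_i\jap{A^{(i)}_{\bm{k}+\bm{e}_j},x^{|\bm{k}|-1}}_i$ still vanishes, giving $1$ in total. Since $\bm{k}$ is normal for $\widehat{\bm{\nu}}$, uniqueness of the type I polynomial forces $\bm{D}=\widehat{\bm{A}}_{\bm{k}}$. Substituting back and cancelling the common factor $(x-z_0)$ in the polynomial ring yields $\bm{K}_{\bm{k}}(z_0,x)=-P_{\bm{k}}(z_0)\widehat{\bm{A}}_{\bm{k}}(x)$. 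The only subtle point is ensuring that the $\bm{A}_{\bm{k}+\bm{e}_j}$ appearing in the CD sum—possibly at non-normal indices—still satisfy the orthogonality relations \eqref{eq:moprlI}, which they do by construction irrespective of normality.
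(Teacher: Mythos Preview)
Your proof is correct, but it takes a somewhat different route from the paper's. The paper verifies the type I orthogonality relations for $\bm{K}_{\bm{k}}(z_0,\cdot)$ with respect to $\widehat{\bm{\nu}}$ directly from the reproducing property of the kernel: using biorthogonality one has $\sum_j\jap{K_{\bm{k}}^{(j)}(z_0,x),Q(x)}_j=Q(z_0)$ for every polynomial $Q$ of degree $\le |\bm{k}|-1$, and from this a one-line computation shows $\sum_j\jap{(x-z_0)K_{\bm{k}}^{(j)}(z_0,x),x^p}_j$ equals $0$ for $p\le |\bm{k}|-2$ and $-P_{\bm{k}}(z_0)$ for $p=|\bm{k}|-1$. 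Uniqueness then finishes. You instead invoke the closed-form Christoffel--Darboux identity \eqref{eq:cd formula moprl} to write $(x-z_0)\bm{K}_{\bm{k}}(z_0,x)$ explicitly as a combination of $\bm{A}_{\bm{k}}$ and the $\bm{A}_{\bm{k}+\bm{e}_j}$, and check the orthogonality term by term. Both arguments end with the same uniqueness step; the paper's avoids needing the recurrence coefficients $a_{\bm{k},j}$ and the (possibly non-normal) neighbouring type I polynomials altogether, while yours produces as a by-product the explicit relation linking the kernel to $\bm{A}_{\bm{k}}$ and $\bm{A}_{\bm{k}+\bm{e}_j}$, which is essentially the content of~\eqref{eq:cd kernel type II} in the subsequent remark. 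A minor wording point: no ``swapping of arguments'' is needed in your first display; you simply substitute $x=z_0$ into \eqref{eq:cd formula moprl} and rename $y$.
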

\begin{proof}
    If $P_{\bm{k}}(z_0) = 0$ then $P_{\bm{k}}$ satisfies all the orthogonality conditions for the index $(\bm{k},1)$ with respect to the system $(\bm{\nu},\delta_{z_0})$, which is impossible by Theorem \ref{thm:ct moprl}, so $P_{\bm{k}}(z_0) \neq 0$. By \eqref{eq:cd kernel moprl} we have
    \m{\sum_{j = 1}^r\jap{(x-z_0)K_{\bm{k}}^{(j)}(z_0,x),x^p}_j = -P_{\bm{k}}(z_0)\sum_{j = 1}^r\jap{A_{\bm{k}}^{(j)}(x),x^p}_j, \qquad p = 0,\dots,\abs{\bm{k}}-1,}
    so $\bm{K}_{\bm{k}}(z_0,x)$ satisfies the same orthogonality relations as $-P_{\bm{k}}(z_0)\widehat{\bm{A}}_{\bm{k}}(x)$. Clearly we also have $\deg{K_{\bm{k}}^{(j)}(z_0,x)} \leq k_j - 1$, so we get \eqref{eq:cd kernel christoffel moprl}. 
\end{proof}

\begin{rem}
    For the type II polynomials we see that \eqref{eq:ChrP1} does not resemble $\bm{K}_{\bm{k}}(x,z_0)$. We can however get a similar result to \eqref{eq:cd kernel christoffel moprl} by considering linear combinations of $\widehat{P}_{\bm{k}-\bm{e}_1},\dots,\widehat{P}_{\bm{k}-\bm{e}_r}$. A quick check using \eqref{eq:ChrP1} and $\eqref{eq:cd formula moprl}$ shows that
    \nm{eq:cd kernel type II}{P_{\bm{k}}(z_0)K_{\bm{k}}^{(i)}(z_0,x) = \sum_{j = 1}^r a_{\bm{k},j}P_{\bm{k}-\bm{e}_j}(z_0)A_{\bm{k}+\bm{e}_j}^{(i)}(z_0)\widehat{P}_{\bm{k}-\bm{e}_j}(x).}
\end{rem}

\subsection{Repeated Christoffel transform}\label{ss:repeated}
\hfill\\

In the MOPRL construction, even for the discrete case $\mu_l\in\mathcal{L}_{N_l}$, we always have $a_{j\bm{e}_l,l} \ne 0$ for $1\le j \le N_l-1$, and we terminate $a$'s as soon as we reach $a_{N_l\bm{e}_l,l} = 0$. 
If one thinks in terms of the Jacobi matrices rather than in terms of the orthogonality measures then it is natural to allow some of the $a_{j\bm{e}_l,l}$'s to be zero. Let us do so for the last measure $\mu_r$ only. 

To this end, for each $l=1,\ldots,r$, let $J_l$ be an $N_l\times N_l$  Jacobi matrix (with $N_l$ finite or infinite) with Jacobi coefficients $\{a_{j\bm{e}_l,l}\}_{j=1}^{N_l-1}$ and $\{b_{j\bm{e}_l,l}\}_{j=0}^{N_l-1}$. We assume that $J_1,\ldots,J_{r-1}$ are proper, that is, $a_{j\bm{e}_l,l}\ne 0$ for each $1\le j \le N_l-1$  and $1\le l\le r-1$. Suppose $J_{r}$ is the direct sum $J_{r}=\bigoplus_{j=1}^\infty  J^{(j)}_{r}$ (which is improper), where $J^{(j)}_{r}$ is a proper Jacobi matrix of size $m_j\times m_j$, where $m_j < \infty$. Denote $\det(z-J^{(j)}_{r}) = \Phi_j(z)$. We then have $\deg\Phi_j = m_j$.

Now observe that we can still run the CC algorithm without changes. We then treat $N_{r}$ as $\infty$, and {\it not} as $m_1$, as we would normally. The algorithm will be able to compute all the recurrence coefficients $\{a_{\bm{n},j}\}_{\bm{n}\in\bbN^r_{(\bm{K},\infty)}}$  and $\{b_{\bm{n},j}\}_{\bm{n}\in\bbN^r_{(\bm{K},\infty)-\bm{e}_j}}$ 
for each $1\le j \le r$, as long as all the $\delta_{\bm{n},j,i}$ coefficients are nonzero. All of these coefficients satisfy the compatibility equations~\eqref{eq:CC1}--\eqref{eq:CC3} on the extended lattice $\bbN^r_{(\bm{K},\infty)}$ (here $\bm{K} = (N_1,\ldots,N_{r-1})$ as usual), and therefore one can uniquely define the polynomials $P_{\bm{n}}$ and $\bm{A}_{\bm{n}}$ using~\eqref{eq:one nnr} and~\eqref{eq:type 1 recurrence relation} for all $\bm{n}\in\bbN^r_{(\bm{K},\infty)}$.

By the results of the previous section it is clear that if $\bm{n}\in\bbN^r_{(\bm{K},\infty)}$ has the form $\bm{n} = (\bm{k},m_1+\ldots+m_s)$ with $\bm{k}\in\bbN^{r-1}_{\bm{K}}$ for some $s\in\bbN$, then:
\begin{itemize}
    \item[\it{i)}] $a_{\bm{n},r} = 0$;
    \item[\it{ii)}] $a_{\bm{n},j}$ and $b_{\bm{n},j}$ for $1\le j\le r-1$ are the NNRR coefficients of $(\Phi_1\ldots\Phi_s)\bm{\nu}$, the Christoffel transform of the system $\bm{\nu}=(\mu_1,\ldots,\mu_{r-1})$ corresponding to the polynomial $\Phi_1(x)\ldots \Phi_s(x)$; 
    \item[\it{iii)}] $P_{\bm{n}}(x)$ is equal to $\Phi_1(x)\ldots \Phi_s(x)$ times the type II multiple orthogonal polynomial with respect to the system $(\Phi_1\ldots\Phi_s)\bm{\nu}$.
    \item[\it{iv)}] $A_{\bm{n}}^{(j)}(x)$ for $1\le j \le r-1$ is the type I multiple orthogonal polynomial with respect to the system $(\Phi_1\ldots\Phi_s)\bm{\nu}$.
\end{itemize}
Furthermore, in the region $\bm{n}\in\bbN^r_{(\bm{K},\infty)}$ with $m_1+\ldots+m_s\le n_r< m_1+\ldots+m_{s+1}$ for some $s\in\bbN$, we have:
\begin{itemize}
    \item[\it{v)}] $a_{\bm{n},j}$ and $b_{\bm{n},j}$ for $1\le j\le r$ are the NNRR coefficients of the system $((\Phi_1\ldots\Phi_s)\bm{\nu},\omega_s)$, where $\omega_s$ is the spectral measure/moment functional corresponding to the Jacobi submatrix $J_r^{(s)}$.
    \item[\it{vi)}] $P_{\bm{n}}(x)$ is equal to $\Phi_1(x)\ldots \Phi_{s-1}(x)$ times the type II multiple orthogonal polynomial with respect to the system $((\Phi_1\ldots\Phi_s)\bm{\nu},\omega_s)$.
    \item[\it{vii)}] $A_{\bm{n}}^{(j)}(x)$ is equal to the type I multiple orthogonal polynomial with respect to the system $((\Phi_1\ldots\Phi_s)\bm{\nu},\omega_s)$.
\end{itemize}
The condition $\delta_{\bm{n},j,l}\ne 0$ that ensures that the CC algorithm in Theorem~\ref{thm:NNCC} does not break down is then equivalent to the perfectness of all the systems $((\Phi_1\ldots\Phi_s)\bm{\nu},\omega_s)$ for all $s\ge 1$. For example, this is easily seen to be true if $(\mu_1,\ldots,\mu_{r-1})\in\mathcal{M}^{r-1}$ is Angelesco and $\sigma(J_r)$ is disjoint from each $\Delta_j$ (the convex hull of $\supp\,(\mu_j)$).  The same holds true if  $(\mu_1,\ldots,\mu_{r-1})\in\mathcal{M}^{r-1}$ is an AT system  (see, e.g., ~\cite[Sect 23.1.2]{Ismail} for the definition) on an interval $I$, and $\sigma(J_r)$ is disjoint from $I$.

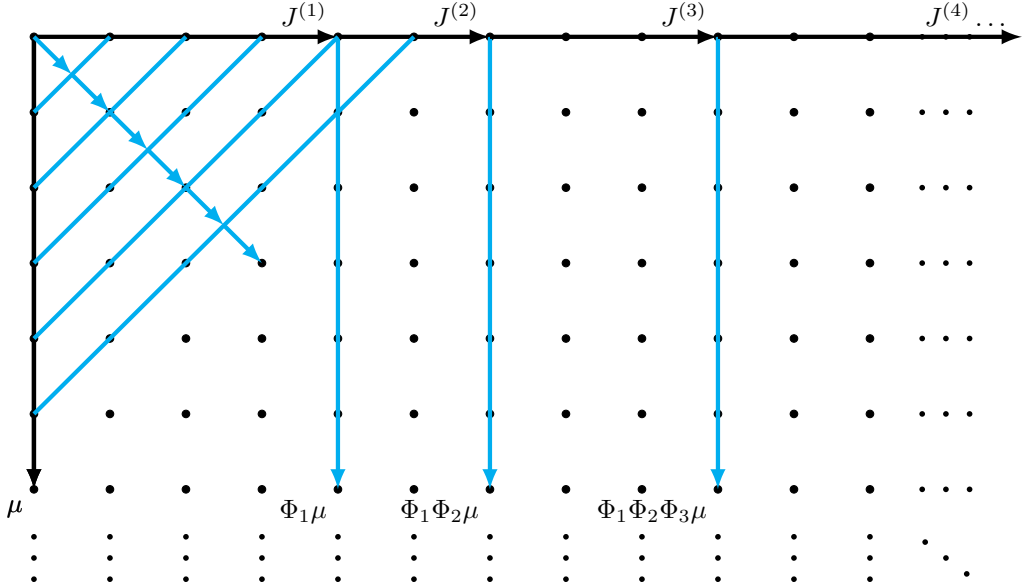
\begin{figure}[ht]
\begin{tikzpicture}

\foreach \x in {0,1,...,11}{
\foreach \y in {0,1,...,6}{
\node[draw,circle,inner sep=1pt,fill] at (\x,\y) {};}}
\draw [ultra thick,-latex,black] (0,6) -- (0,0) node[below left,sloped] {$\mu$};
\draw [ultra thick,-latex,cyan] (4,6) -- (4,0) node[below left,sloped,black]{$\Phi_1\mu$};
\draw [ultra thick,-latex,cyan] (6,6) -- (6,0) node[below left,sloped,black]{$\Phi_1\Phi_2\mu$};
\draw [ultra thick,-latex,cyan] (9,6) -- (9,0) node[below left,sloped,black]{$\Phi_1\Phi_2\Phi_3\mu$};
\draw [ultra thick,-latex,black] (0,6) -- (0,0) node[below left,sloped] {$\mu$};
\draw [ultra thick,-latex,black] (0,6) -- (4,6) node [above left,black]{$J^{(1)}$};
\draw [ultra thick,-latex,black] (4,6) -- (6,6) node [above left,black]{$J^{(2)}$};
\draw [ultra thick,-latex,black] (6,6) -- (9,6) node [above left,black]{$J^{(3)}$};
\draw [ultra thick,-latex,black] (9,6) -- (13,6) node [above left,black]{$J^{(4)}\dots$};
\draw [ultra thick,cyan] (1,6) -- (0,5);
\draw [ultra thick,cyan] (2,6) -- (0,4);
\draw [ultra thick,cyan] (3,6) -- (0,3);
\draw [ultra thick,cyan] (4,6) -- (0,2);
\draw [ultra thick,cyan] (5,6) -- (0,1);
\draw [ultra thick,-latex,cyan] (0,6) -- (0.5,5.5) node{} [above left];
\draw [ultra thick,-latex,cyan] (0.5,5.5) -- (1,5) node{} [above left];
\draw [ultra thick,-latex,cyan] (1,5) -- (1.5,4.5) node{} [above left];
\draw [ultra thick,-latex,cyan] (1.5,4.5) -- (2,4) node{} [above left];
\draw [ultra thick,-latex,cyan] (2,4) -- (2.5,3.5) node{} [above left];
\draw [ultra thick,-latex,cyan] (2.5,3.5) -- (3,3) node{} [above left];
\node[scale=2] at (0,-0.7) {$\vdots$};
\node[scale=2] at (1,-0.7) {$\vdots$};
\node[scale=2] at (2,-0.7) {$\vdots$};
\node[scale=2] at (3,-0.7) {$\vdots$};
\node[scale=2] at (4,-0.7) {$\vdots$};
\node[scale=2] at (5,-0.7) {$\vdots$};
\node[scale=2] at (6,-0.7) {$\vdots$};
\node[scale=2] at (7,-0.7) {$\vdots$};
\node[scale=2] at (8,-0.7) {$\vdots$};
\node[scale=2] at (9,-0.7) {$\vdots$};
\node[scale=2] at (10,-0.7) {$\vdots$};
\node[scale=2] at (11,-0.7) {$\vdots$};
\node[scale=2] at (12,-0.7) {$\ddots$};
\node[scale=2] at (12,0) {$\ldots$};
\node[scale=2] at (12,1) {$\ldots$};
\node[scale=2] at (12,2) {$\ldots$};
\node[scale=2] at (12,3) {$\ldots$};
\node[scale=2] at (12,4) {$\ldots$};
\node[scale=2] at (12,5) {$\ldots$};
\node[scale=2] at (12,6) {$\ldots$};
\end{tikzpicture} 
\caption{Along the blue vertical arrows we find the Christoffel transforms $\Phi_1\mu,(\Phi_1\Phi_2)\mu,(\Phi_1\Phi_2\Phi_3)\mu,\dots$. The Jacobi coefficients of each transform can be computed via the NNCC algorithm.} 
\end{figure}

The simplest example of this construction is to take $r=2$ with an arbitrary $J_1$, the Jacobi matrix of some $\mu_1\in\mathcal{M}$ with $\supp\,(\mu_1)\subseteq\bbR_+$, and $J_2$ to be the zero matrix, i.e., $m_j=1$ with $J_r^{(j)}$ being the $1\times 1$ zero matrices. Then for each $k\in\bbN$,   
{$\{a_{(k,s),1}\}_{k=1}^\infty$ and $\{b_{(k,s),1}\}_{k=0}^\infty$} are then the  Jacobi coefficients of the measure 
{$x^s d\mu_1(x)$}. The corresponding polynomials are often called the {\it associated polynomials}. The CC algorithm from above has a close connection to the $qd$-algorithm of Rutishauser~\cite{QD} and the Toda lattice with discrete time.

Similarly, the special case of the given construction with $r>2$ and $m_j=1$ for all $j$ (so that $J_r^{(j)}$ is the diagonal matrix with $(\lambda_1,\lambda_2,\ldots)$ on the diagonal) leads to the multi-dimensional Toda lattice with discrete time (``dm-Toda lattice''), studied in~\cite{ADMVA}. The integrable system requires the perfectness of each $\bm{\nu}, (x-\lambda_1)\bm{\nu}, (x-\lambda_1)(x-\lambda_2)\bm{\nu}, \ldots$, see~\cite[Remark~3.5]{ADMVA}. As was discussed in the current section, perfectness holds true if $\bm{\nu}$ is any Angelesco or any AT system, and each $\lambda_j$ lies outside of the interior of the convex hull of each $\supp\,\mu_l$. This provides a wealth of examples of well-defined integrable systems of~\cite{ADMVA} corresponding to the repeated one-step Christoffel transforms. The same holds true for repeated one-step Geronimus transform  if $\bm{\nu}$ is any Angelesco or any AT system, and each $\lambda_j$ lies outside of the convex hull of each $\supp\,\mu_l$. 



\subsection{Recurrence Coefficients and Normality}\label{ss:recurrNorm}
\hfill\\

{As discussed in Section~\ref{ss:NNRR}, every perfect system (in the extended sense of Definition~\ref{def:perf}) $\bm{\mu}\in\mathcal{L}^r$ produces the nearest neighbour recurrence coefficients $\{a_{\bm{n},j}\}_{\bm{n}\in\bbN^r_{\bm{N}}}$  and $\{b_{\bm{n},j}\}_{\bm{n}\in\bbN^r_{\bm{N}-\bm{e}_j}}$ for each $1\le j \le r$ that satisfy the compatibility conditions~\eqref{eq:CC1}, ~\eqref{eq:CC2},~\eqref{eq:CC3}. In this section we 
identify what conditions are necessary and sufficient in order for coefficients $\{a_{\bm{n},j}\}_{\bm{n}\in\bbN^r_{\bm{N}}}$  and $\{b_{\bm{n},j}\}_{\bm{n}\in\bbN^r_{\bm{N}-\bm{e}_j}}$ to be the nearest neighbour recurrence coefficients of some perfect system.

This corresponds to a result central to the paper \cite{Integrable Systems}, solved for $\bm{\mu}\in \mathcal{L}_{\infty}^r$, see also~\cite[Prop.~3]{ADL}. Here we give an alternative simple proof, based on the lemmas in Section \ref{normality critera}, along with the CC algorithm. The special feature that appears when some of the measures/functionals are finitely supported is property {\it b)} below.
}


\begin{thm} \label{thm:perfect systems} Let $\bm{N}=(N_j)_{j=1}^r$, where $N_j\le \infty$.
Suppose we are given the sets of complex coefficients  $\{a_{\bm{n},j}\}_{\bm{n}\in\bbN^r_{\bm{N}}}$  and $\{b_{\bm{n},j}\}_{\bm{n}\in\bbN^r_{\bm{N}-\bm{e}_j}}$ for each $1\le j \le r$.
Assume they satisfy the partial difference equations
~\eqref{eq:CC1}, ~\eqref{eq:CC2},~\eqref{eq:CC3} along with the boundary conditions
\begin{enumerate}[label=\alph*)]
\item $a_{\bm{n},j} = 0$ when $n_j = 0$,
\item $a_{\bm{n},j} = 0$ when $n_j = N_j$.
\end{enumerate}
and the normality conditions
\begin{enumerate}[resume,label=\alph*)]
\item $a_{\bm{n},j} \neq 0$ when $0 < n_j < N_j$,
\item $b_{\bm{n},j} - b_{\bm{n},k} \neq 0$ when $j \neq k$.
\end{enumerate}
Then there is a unique perfect system $\bm{\mu} = (\mu_1,\dots,\mu_r)\in \mathcal{L}^r$ such that $a_{\bm{n},i}$ and $b_{\bm{n},i}$ are the nearest neighbour recurrence coefficients of $\bm{\mu}$ 
with $\mu_j\in\mathcal{L}_{N_j}$ for each $j = 1,\dots,r$.
\end{thm}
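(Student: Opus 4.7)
The plan is to construct the system $\bm{\mu}$ explicitly and then verify its perfectness using the normality criteria of Section \ref{normality critera}. First, using the NNRR \eqref{eq:one nnr} with the given coefficients and the initial data $P_{\bm{0}} = 1$, $P_{\bm{n}-\bm{e}_i}:=0$ whenever $n_i = 0$, I would recursively define monic polynomials $P_{\bm{n}}$ of degree $|\bm{n}|$ for each $\bm{n} \in \bbN^r_{\bm{N}}$. The compatibility conditions \eqref{eq:CC1}--\eqref{eq:CC3} are precisely the integrability conditions for this recursion: when $\bm{n}$ can be reached from $\bm{n}-\bm{e}_j-\bm{e}_l$ via two different orderings of $\bm{e}_j$ and $\bm{e}_l$, a direct calculation (substituting the recurrence for $P_{\bm{n}-\bm{e}_l}$ into the one for $P_{\bm{n}}$ and comparing with the opposite order) shows that the two expressions for $P_{\bm{n}}$ agree precisely because \eqref{eq:CC1}--\eqref{eq:CC3} hold at $\bm{n}-\bm{e}_j-\bm{e}_l$. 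At the boundary $n_j = N_j$ the recursion still makes sense because $a_{\bm{n},j} = 0$ by condition b).

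Next, for each $j$ I would invoke the finite-support Favard theorem (Theorem \ref{thm:Favard}) applied to the three-term recurrence data $\{a_{n\bm{e}_j,j}\}_{n=1}^{N_j-1}$, $\{b_{n\bm{e}_j,j}\}_{n=0}^{N_j-1}$, which is legitimate by conditions a) and c). This produces a moment functional $\mu_j \in \mathcal{L}_{N_j}$, unique up to a non-zero multiplicative constant, which I fix by the normalization $\mu_j[1] = 1$. The polynomials $P_{n\bm{e}_j}$ constructed above coincide with the monic orthogonal polynomials of $\mu_j$.

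The crux of the argument is then to prove, by induction on $|\bm{n}|$, the combined statement that $\bm{n}$ is normal for $\bm{\mu}$, that $P_{\bm{n}}$ is the type II multiple orthogonal polynomial at $\bm{n}$, and that the NNRR coefficients of $\bm{\mu}$ at $\bm{n}$ coincide with the prescribed $\{a_{\bm{n},j},b_{\bm{n},j}\}$. For the inductive step at $\bm{n} = \bm{m}+\bm{e}_l$, substituting the NNRR into $\jap{P_{\bm{n}}, x^p}_j$ and invoking the inductive hypothesis eliminates all terms except, in the case $j \neq l$ and $p = m_j-1$, a residual balance between $\jap{P_{\bm{m}}, x^{m_j}}_j$ and $a_{\bm{m},j}\jap{P_{\bm{m}-\bm{e}_j}, x^{m_j-1}}_j$; this balance is an immediate consequence of formula \eqref{eq:a} applied at $\bm{m}$, which is available by the inductive hypothesis and Remark \ref{rem:NNRcoef1}. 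Normality of $\bm{n}$ is then deduced from Lemma \ref{lem:small normality lemma 2}, using that $\jap{P_{\bm{m}}, x^{m_l}}_l\ne 0$ by iterating the same quasi-norm identity and exploiting conditions c) and d) to rule out zero factors. The main obstacle is organising this simultaneous induction so that orthogonality, the quasi-norm formula, normality, and the coincidence of NNRR coefficients propagate in the correct order; \eqref{eq:CC3} is the key algebraic ingredient controlling how the $a_{\bm{n},j}$ vary transversally to $\bm{e}_j$.

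Once these three statements are established for every $\bm{n} \in \bbN^r_{\bm{N}}$, perfectness of $\bm{\mu}$ is immediate from Theorem \ref{thm:recurrence relation gives perfect system}: the relation \eqref{eq:nnr cor} follows by subtracting \eqref{eq:one nnr} written in two different directions, and the constants $b_{\bm{n},j} - b_{\bm{n},l}$ are non-zero by condition d). Proposition \ref{rem:perf} combined with condition b) guarantees that no index with $n_j > N_j$ is normal, so $\mu_j$ has the correct support size $N_j$. Uniqueness, up to the scaling of each $\mu_j$ fixed at the outset, follows from the uniqueness part of Favard's theorem: any other perfect system with the prescribed NNRR coefficients must yield the same orthogonal polynomials $P_{n\bm{e}_j}$ on each axis, and hence the same Jacobi coefficients, hence the same $\mu_j$.
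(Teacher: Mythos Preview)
Your approach is workable but takes a noticeably longer route than the paper's. You build the polynomials $P_{\bm{n}}$ first from the recursion, check path-independence, and then verify orthogonality by hand. The paper instead constructs only the $\mu_j$ via Favard, and then runs a two-level induction: indices of size $\le M$ are normal, and the NNRR coefficients of $\bm{\mu}$ at indices of size $\le M-1$ agree with the given ones. The normality step is obtained not by checking orthogonality but directly from Lemma~\ref{lem:normality vs recurrence}~b): writing $\bm{n}=\bm{m}+\bm{e}_j+\bm{e}_k$ with $|\bm{m}|=M-1$, the inductive hypothesis gives $b_{\bm{m},j}\ne b_{\bm{m},k}$, hence $\bm{n}$ is normal. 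The coefficient-matching step is then free, because both the given coefficients and those of $\bm{\mu}$ satisfy the same compatibility equations with the same axial initial data, so the CC algorithm of Theorem~\ref{thm:NNCC} forces them to coincide. This bypasses the direct orthogonality computation entirely and makes the final appeal to Theorem~\ref{thm:recurrence relation gives perfect system} unnecessary.

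There is also a genuine (though easily repairable) gap in your inductive step. You write $\bm{n}=\bm{m}+\bm{e}_l$ and claim that substituting the NNRR in direction $l$ into $\jap{P_{\bm{n}},x^p}_j$ leaves a residual only when $j\ne l$ and $p=m_j-1$. In fact, for $j=l$ and $p=n_l-1=m_l$ you get
\[
\jap{P_{\bm{m}},x^{m_l+1}}_l - b_{\bm{m},l}\jap{P_{\bm{m}},x^{m_l}}_l - \sum_i a_{\bm{m},i}\jap{P_{\bm{m}-\bm{e}_i},x^{m_l}}_l,
\]
and none of these inner products is controlled by your quasi-norm identity at $\bm{m}$. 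The fix is to use the path-independence you already claimed: when checking orthogonality with respect to $\mu_j$, write $P_{\bm{n}}$ via the NNRR in some direction $k\ne j$ with $n_k>0$ (the case $\bm{n}=n\bm{e}_j$ being covered by one-variable Favard). Then the residual at the top degree does reduce to the quasi-norm identity at $\bm{n}-\bm{e}_k$. With this correction your argument goes through, but the paper's route via Lemma~\ref{lem:normality vs recurrence}~b) and the CC algorithm is considerably shorter.
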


\begin{rem}
    By induction using \eqref{eq:CC3}, we see that d) can be replaced with $b_{\bm{0},j} - b_{\bm{0},k} \neq 0$ when $j \neq k$. Alternatively, c) can be replaced with $a_{n\bm{e}_j,j} \neq 0$ for each $0<n<N_j$ when $1 \leq j \leq r$.
\end{rem}

\begin{rem}
    If we also assume $a_{n\bm{e}_j,j} > 0$ for each $0<n<N_j$ {and some $1\le j\le r$}, then {additionally $\mu_j\in \mathcal{M}_{N_j}$}.
\end{rem}

\begin{proof}
By the Favard Theorem~\ref{thm:Favard}, we can get functionals $\mu_1,\dots,\mu_r$ such that $a_{n\bm{e}_j,j}$ and $b_{n\bm{e}_j,j}$ are the Jacobi coefficients of $\mu_j\in\mathcal{L}_{N_j}$ 
for each $j = 1,\dots,r$, by \textit{c)}. We now want to prove that $\bm{\mu} = (\mu_1,\dots,\mu_r)$ is perfect and that $a_{\bm{n},j}$ and $b_{\bm{n},j}$ are the recurrence coefficients of $\bm{\mu}$. We will use induction, to prove that for each $M \in \bbN$,
\begin{enumerate}[label=\alph*)]
    \item[1)] Every index $\bm{n} \leq \bm{N}$ such that $\abs{\bm{n}} \leq M$ is normal.
    \item[2)] The nearest neighbour recurrence coefficients of $\bm{\mu}$, for every index $\bm{n} \leq \bm{N}$ such that $\abs{\bm{n}} \leq M - 1$ are exactly the recurrence coefficients $\set{a_{\bm{n},i},b_{\bm{n},i}}_{i = 1}^{r}$.
\end{enumerate}
This is true when $M = 0$ and $M = 1$, since then we already know which indices of the form $\bm{n} = n_j\bm{e}_j$ are normal, and the nearest neighbour recurrence coefficients of $\bm{n} = \bm{0}$ are $a_{\bm{0},j}$ and $b_{\bm{0},j}$. Now assume 1) and 2) hold for $1,\dots,M$ and suppose $\bm{n} \leq \bm{N}$ and $\abs{\bm{n}} = M + 1$. If $\bm{n} = n_j\bm{e}_j$ then we already know that 1) holds. Otherwise we can write $\bm{n} = \bm{m} + \bm{e}_j + \bm{e}_k$ for some index $\bm{m}$ and $k \neq j$. Since $\abs{\bm{m}} = M - 1$ we know that the nearest neighbour recurrence coefficients for $\bm{m}$ are $a_{\bm{m},i}$ and $b_{\bm{m},i}$. In particular $b_{\bm{m},j} \neq b_{\bm{m},k}$, so that $\bm{n} = \bm{m} + \bm{e}_j + \bm{e}_k$ is normal by Corollary \ref{cor:nnr cor}. Now the conditions of Theorem \ref{thm:compatibility conditions} are satisfied and we can apply CC algorithm to compute the nearest neighbour recurrence coefficients of the system $\bm{\mu}$ for every $\bm{n}$ with $\abs{\bm{n}} = M$. Since our recurrence coefficients given in the Theorem are computed we get 2), and the perfectness of $\bm{\mu}$ follows.
\end{proof}





\subsection{Zero interlacing for Christoffel transforms: general results}\label{ss:interlacing}
\hfill\\

In this section we prove some new interlacing results for multiple orthogonal polynomials of type I and type II. 
Given two real polynomials $p(x)$ and $q(x)$ we 
write $p(x)\sim q(x)$ and
say that the zeros of polynomials $p(x)$ and $q(x)$  interlace if all the zeros of $p(x)$ and $q(x)$ are pairwise distinct, real, simple, and between every two consecutive zeros of one of the polynomials there lies exactly one zero of the other polynomial. 

Let us call a polynomial real-rooted if all of its zeros are real. Recall (see, e.g.,~\cite{Ismail}) that type II and type I polynomials for any Angelesco system are real-rooted. Type II polynomials for any AT systems are also real-rooted. 

The following result is well-known (this is part of the Hermite--Kakeya--Obreschkoff theorem).

\begin{lem}\label{lem:Hermite}
    Suppose $p$ and $q$ are two polynomials with interlacing zeros $p(x)\sim q(x)$. 
    Then
    \begin{align}
        & p(x)+\alpha q(x)\sim q(x), \mbox{ for any } \alpha\in\bbR, \\
        & p(x)+\alpha q(x) \sim p(x), \mbox{ for any } \alpha\ne 0.
    \end{align}
\end{lem}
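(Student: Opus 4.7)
The plan is to exploit sign-changes of $p$ and $q$ at each other's zeros --- forced by the interlacing hypothesis together with the positivity of the leading coefficients --- and combine them with the behaviour of $p + \alpha q$ at $\pm\infty$ to locate all of its zeros.

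First I would record the sign pattern. Let $n = \deg q$ and $y_1 < \cdots < y_n$ denote the zeros of $q$. Since $p$ is eventually positive (positive leading coefficient) and strict interlacing with $p \interl q$ forces $p$ to have exactly one zero in each of $(-\infty, y_1), (y_1, y_2), \ldots$, a direct check at the rightmost endpoint gives $\sgn p(y_i) = (-1)^{n-i}$; by symmetry the analogous alternating pattern holds for $\sgn q(x_i)$ at the zeros $x_1 < x_2 < \cdots$ of $p$, with the sign of $q$ at the rightmost $x_i$ determined by whether $\deg p$ equals $\deg q$ or $\deg q + 1$.

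For the first claim, write $P := p + \alpha q$ and note $P(y_i) = p(y_i)$. The alternation above produces at least one zero of $P$ strictly in each of the $n-1$ intervals $(y_i, y_{i+1})$. Comparing $\sgn P(y_1) = (-1)^{n-1}$ with the leading-coefficient-determined sign of $P$ at $-\infty$ produces one further zero in $(-\infty, y_1)$, and matching the total against $\deg P$ shows there are no other zeros; the resulting ordering is exactly $P \interl q$. The remaining two claims follow the same template with the roles of $p$ and $q$ exchanged: using $P(x_i) = \alpha q(x_i)$, the sign pattern of $q(x_i)$ gets multiplied by $\sgn \alpha$, and this flip is precisely what moves the extra zero from the left of $x_1$ (for $\alpha < 0$, giving $P \interl p$) to the right of the largest zero of $p$ (for $\alpha > 0$, giving $P \interr p$); the boundary analysis at $\pm\infty$ is carried out exactly as before.

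The main obstacle --- in fact the only subtle point --- is the degree-drop case when $\deg p = \deg q$ and $\alpha = -\operatorname{lc}(p)/\operatorname{lc}(q)$, so that $\deg P$ is one less than expected. Here one has to verify that the count of forced sign-change zeros still matches the reduced degree of $P$ and that the conclusion is read in the wider sense allowed by the interlacing convention for polynomials of unequal degree. Aside from this bookkeeping, the entire proof is routine sign chasing.
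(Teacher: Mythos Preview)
The paper does not supply a proof: the lemma is simply stated as a well-known part of the Hermite--Kakeya--Obreschkoff theorem. Your sign-change argument is the standard way to establish such results and is correct in outline.

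One small caveat. You flag only the degree-drop value $\alpha = -\operatorname{lc}(p)/\operatorname{lc}(q)$ (in the equal-degree case) as subtle, but for $\alpha$ strictly below this value the leading coefficient of $P=p+\alpha q$ is negative, and then your boundary comparison at $\pm\infty$ places the extra zero of $P$ in $(y_n,\infty)$ rather than in $(-\infty,y_1)$, yielding $q\interl P$ rather than $P\interl q$. (For instance $p(x)=x$, $q(x)=x-1$, $\alpha=-2$ gives $P(x)=-x+2$ with its zero at $2>1$.) This is really a wrinkle in the first displayed claim of the lemma rather than in your method; in all of the paper's applications one has $\deg p=\deg q+1$, where the leading term of $P$ comes from $p$ and the issue cannot arise.
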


\begin{cor}\label{cor:interlacing}
    Suppose $\bm{\nu}$ and $\widehat{\bm{\nu}} = (x-z_0)\bm{\nu}$ are perfect for some $z_0\in\bbR$. 
    If ${P}_{\bm{k}+\bm{e}_j}(x)\sim {P}_{\bm{k}}(x)$, then $\widehat{P}_{\bm{k}}$ is real-rooted. Moreover,
    \begin{align}
    \label{eq:newInt1}
        & (x-z_0)\widehat{P}_{\bm{k}}(x) \sim P_{{\bm{k}}}(x) , \\
        \label{eq:newInt2}
        & (x-z_0)\widehat{P}_{\bm{k}}(x) \sim P_{{\bm{k}}+\bm{e}_j}(x).
    \end{align}
    
    

\end{cor}
\begin{proof}
    Immediate from~\eqref{eq:ChrP1} and Lemma~\ref{lem:Hermite}.
\end{proof}
\begin{rem}
    Let $\bm{\nu}$ be any Angelesco or AT system and suppose $z_0$ is not a zero of any $P_{\bm{k}}(x)$. By Theorem~\ref{thm:ChrMOPRLII}, $\widehat{\bm{\nu}}$ is perfect. Since interlacing for $\bm{\nu}$ holds at every multi-index \cite{Jacobi operator on trees, Ismail,Interlacing}, Corollary~\ref{cor:interlacing} applies for any $z_0\in\bbR$, and we obtain that 
    every $\widehat{P}_{\bm{k}}$ is real-rooted and interlacings ~\eqref{eq:newInt1}--\eqref{eq:newInt2} hold true for all such systems. 
    
    In particular, if $z_0$ belongs to the support of one of the measures, then $\widehat{\bm{\nu}}$ is no longer a system of positive measures, so its perfectness is not trivial in that case.
\end{rem}

\begin{cor}\label{cor:interlacingGer}
    Suppose $\bm{\nu}$ and $\widehat{\bm{\nu}} = (x-z_0)\bm{\nu}$ are perfect for some $z_0\in\bbR$.
    If $\widehat{A}^{(j)}_{\bm{k}-\bm{e}_j}(x)\sim \widehat{A}^{(j)}_{\bm{k}}(x)$, then ${A}^{(j)}_{\bm{k}}(x)$ is real-rooted. Moreover, 
    \begin{align}
    \label{eq:newInt3}
        & {A}^{(j)}_{\bm{k}}(x)\sim \widehat{A}^{(j)}_{\bm{k}}(x) , \\
        \label{eq:newInt4}
        & {A}^{(j)}_{\bm{k}}(x)\sim \widehat{A}^{(j)}_{{\bm{k}-\bm{e}_j}}(x).
    \end{align}
\end{cor}
\begin{proof}
    Use~\eqref{eq:nnr cor type 1} and the leftmost equality in~\eqref{eq:type 1 determinantal formula} to get
\begin{equation}\label{eq:typeICC}
    \bm{A}_{\bm{k} } - \widehat{\bm{A}}_{\bm{k} - \bm{e}_j} 
    = \delta \widehat{\bm{A}}_{\bm{k}},
\end{equation}
where $\delta = \delta_{(\bm{k}-\bm{e}_j,0),m,j}$ is non-zero by Lemma~\ref{lem:normality vs recurrence} {\it b)}, and~\eqref{eq:CC1}. If $z_0\in\bbR$ then $\delta\in\bbR$, and Lemma~\ref{lem:Hermite} completes the proof.
\end{proof}
\begin{rem}
    If $\bm\nu$ is Angelesco and $z_0$ is outside of the convex hull of each $\mu_j$, then $\widehat{\bm{\nu}}$ is also Angelesco. For these systems, the zeros of $\widehat{A}^{(j)}_{\bm{k}-\bm{e}_j}(x)$ and $\widehat{A}^{(j)}_{\bm{k}}(x)$ interlace for any index $\bm{k}$ and any $j$ (see \cite{DenYat,FMM}), so Corollary \ref{cor:interlacingGer} applies. Similarly, if $\bm{\nu}$ is Nikishin, then so is $\widehat{\bm{\nu}}$. $\widehat{A}^{(j)}_{\bm{k}-\bm{e}_j}(x)$ and $\widehat{A}^{(j)}_{\bm{k}}(x)$ do interlace but not for all the indices $\bm{n}$ (see~\cite{KVInterlacing} for more details), so Corollary \ref{cor:interlacingGer} applies for such indices.
    %
\end{rem}



\subsection{Zero interlacing: continuous examples}\label{ss:interlacingContinuous}
\hfill\\

While Corollaries~\ref{cor:interlacing} and~\ref{cor:interlacingGer} might seem tame and almost obvious, they allow to obtain interesting interlacing results when applied to the multiple analogues of the classical orthogonal polynomials. We collect them in Sections~\ref{ss:interlacingContinuous} and~\ref{ss:interlacingDiscrete}.

We remark that recurrences~\eqref{eq:ChrP1} and~\eqref{eq:typeICC} (as well as the NNRR for type II and type I polynomials) provide not only interlacing properties but also various relations that the classical MOPRL (as well as classical OPRL, as a special case $r=1$) must satisfy. It is highly likely that these have appeared in the literature in various forms. We focus on interlacing properties only.

\subsubsection{The multiple Laguerre polynomials of the first kind} Let $P_{\bm{n}}^{\bm{\alpha}}$ be the type II multiple orthogonal polynomials corresponding to the system $\bm{\mu}^{\bm{\alpha}}\in\mathcal{M}^r$ given by
$$
d\mu_j^{\bm{\alpha}} = x^{\alpha_j} e^{-x} \chi_{[0,\infty)}(x) dx,
$$
where $\alpha_j>-1$ and $\alpha_j-\alpha_k \notin\bbZ$ for $j\ne k$. Here and everywhere below $\chi_S(x)$ is the characteristic function of a set $S$. 

Since $\bm{\mu}^{\bm{\alpha}}$ is an AT system and $x \bm{\mu}^{\bm{\alpha}} = \bm{\mu}^{\bm{\alpha}+\bm{1}}$ (where $\bm{1}$ is the vector of $1$'s), Corollary~\ref{cor:interlacing} gives
$$
P_{\bm{n}}^{\bm{\alpha}+\bm{1}}(x)  \sim P_{\bm{n}}^{\bm{\alpha}}(x) \quad \mbox{and} \quad P_{\bm{n}}^{\bm{\alpha}+\bm{1}}(x)\sim P_{\bm{n}+\bm{e}_j}^{\bm{\alpha}}(x) 
$$
for any $\bm{n}\in \bbN^r$ and any $1\le j\le r$.


\subsubsection{The multiple Laguerre polynomials of the second kind} Let $P_{\bm{n}}^{\bm{c};\alpha}$ be the type II multiple orthogonal polynomials corresponding to the system $\bm{\mu}^{\bm{c};\alpha}\in\mathcal{M}^r$ given by
$$
d\mu_j^{\bm{c};\alpha} = x^{\alpha} e^{-c_j x} \chi_{[0,\infty)} (x)dx,
$$
where $\alpha>-1$, $c_j>0$, and $c_j\ne c_k$ for $j\ne k$. 

Since $\bm{\mu}^{\bm{c};\alpha}$ is an AT system and $x \bm{\mu}^{\bm{c};\alpha} = \bm{\mu}^{\bm{c};\alpha+1}$, Corollary~\ref{cor:interlacing} gives
$$
P_{\bm{n}}^{\bm{c};\alpha+1}(x)  \sim P_{\bm{n}}^{\bm{c};\alpha}(x) \quad \mbox{and} \quad P_{\bm{n}}^{\bm{c};\alpha+1}(x)\sim P_{\bm{n}+\bm{e}_j}^{\bm{c};\alpha}(x) 
$$
for any $\bm{n}\in \bbN^r$ and any $1\le j\le r$.

\subsubsection{The Jacobi--Pi\~{n}eiro polynomials} Let $P_{\bm{n}}^{\bm{\alpha};\beta}$ be the type II multiple orthogonal polynomials corresponding to the system $\bm{\mu}^{\bm{\alpha};\beta}\in\mathcal{M}^r$ given by
$$
d\mu_j^{\bm{\alpha};\beta} = x^{\alpha_j} (1-x)^\beta \chi_{[0,1]}(x) dx,
$$
where $\beta>-1$, $\alpha_j>-1$, and $\alpha_j-\alpha_k \notin\bbZ$ for $j\ne k$. 

Since $\bm{\mu}^{\bm{\alpha};\beta}$ is an AT system and $x \bm{\mu}^{\bm{\alpha};\beta} = \bm{\mu}^{\bm{\alpha}+\bm{1};\beta}$ and
$(1-x) \bm{\mu}^{\bm{\alpha};\beta} = \bm{\mu}^{\bm{\alpha};\beta+1}$ 
Corollary~\ref{cor:interlacing} gives
\begin{itemize}
    \item[\it{i)}] $P_{\bm{n}}^{\bm{\alpha}+\bm{1};\beta}(x)  \sim P_{\bm{n}}^{\bm{\alpha};\beta}(x)$ and $P_{\bm{n}}^{\bm{\alpha}+\bm{1};\beta}(x)\sim P_{\bm{n}+\bm{e}_j}^{\bm{\alpha};\beta}(x)$;
    \item[\it{ii)}] $P_{\bm{n}}^{\bm{\alpha};\beta+1}(x)  \sim P_{\bm{n}}^{\bm{\alpha};\beta}(x)$ and $P_{\bm{n}}^{\bm{\alpha};\beta+1}(x)\sim P_{\bm{n}+\bm{e}_j}^{\bm{\alpha};\beta}(x)$,
\end{itemize}
for any $\bm{n}\in \bbN^r$ and any $1\le j\le r$.

\subsubsection{The Angelesco--Jacobi polynomials}
These are the type II multiple orthogonal polynomials $P_{\bm{n}}^{\alpha,\beta,\gamma}$  corresponding to the system $\bm{\mu}^{\alpha,\beta,\gamma}\in\mathcal{M}^2$ 
where 
\begin{align}
    d\mu^{\alpha,\beta,\gamma}_1 & = 
    (1-x)^\alpha (x-a)^\beta |x|^\gamma \chi_{[a,0]}(x) \, dx, \\
    d\mu^{\alpha,\beta,\gamma}_2 & = 
    (1-x)^\alpha (x-a)^\beta |x|^\gamma \chi_{[0,1]}(x) \, dx,
\end{align}
where  $\alpha,\beta,\gamma>-1$, $a<0$. 
Denote $\bm{A}_{\bm{n}}^{\alpha,\beta,\gamma}= ({A}_{\bm{n},1}^{\alpha,\beta,\gamma},{A}_{\bm{n},2}^{\alpha,\beta,\gamma})$ to be the corresponding type I polynomials.

Since $\bm{\mu}^{\alpha,\beta,\gamma}\in\mathcal{M}^2$  is an Angelesco system, Corollary~\ref{cor:interlacing} 
and Corollary~\ref{cor:interlacingGer} can be applied with any choice of $z_0$ away from the zeros of type II multiple orthogonal polynomials which are known to all belong to $(a,0)\cup (0,1)$. Note that in our notation, $(x-1)\bm{\mu}^{\alpha,\beta,\gamma} = \bm{\mu}^{\alpha+1,\beta,\gamma}$, $(x-a)\bm{\mu}^{\alpha,\beta,\gamma} = \bm{\mu}^{\alpha,\beta+1,\gamma}$, and $x\bm{\mu}^{\alpha,\beta,\gamma} = \bm{\mu}^{\alpha,\beta,\gamma+1}$ up to a trivial multiplicative normalization. 
Then
\begin{itemize}
    \item[\it{i)}] $P_{\bm{n}}^{\alpha+1,\beta,\gamma}(x)  \sim P_{\bm{n}}^{\alpha,\beta,\gamma} (x)  $ and $P_{\bm{n}}^{\alpha+1,\beta,\gamma} (x) \sim P^{\alpha,\beta,\gamma}_{{\bm{n}}+\bm{e}_j}(x)$;
    \item[\it{ii)}] $P_{\bm{n}}^{\alpha,\beta+1,\gamma} (x) \sim P_{\bm{n}}^{\alpha,\beta,\gamma} (x)  $ and $P_{\bm{n}}^{\alpha,\beta+1,\gamma} (x) \sim P^{\alpha,\beta,\gamma}_{{\bm{n}}+\bm{e}_j}(x)$;
    \item[\it{iii)}] $x P_{\bm{n}}^{\alpha,\beta,\gamma+1}(x) \sim P_{\bm{n}}^{\alpha,\beta,\gamma} (x)  $ and $x P_{\bm{n}}^{\alpha,\beta,\gamma+1}(x)  \sim P^{\alpha,\beta,\gamma}_{{\bm{n}}+\bm{e}_j}(x)$;
    \item[\it{iv)}] $A_{\bm{n},j}^{\alpha+1,\beta,\gamma}(x)  \sim A_{\bm{n},j}^{\alpha,\beta,\gamma} (x)  $ and $A_{\bm{n},j}^{\alpha+1,\beta,\gamma} (x) \sim A^{\alpha,\beta,\gamma}_{{\bm{n}}+\bm{e}_j,j}(x)$;
    \item[\it{v)}] $A_{\bm{n},j}^{\alpha,\beta+1,\gamma} (x) \sim A_{\bm{n},j}^{\alpha,\beta,\gamma} (x)  $ and $A_{\bm{n},j}^{\alpha,\beta+1,\gamma} (x) \sim A^{\alpha,\beta,\gamma}_{{\bm{n}}+\bm{e}_j,j}(x)$;
    \item[\it{vi)}] $A_{\bm{n},j}^{\alpha,\beta,\gamma+1}(x) \sim A_{\bm{n},j}^{\alpha,\beta,\gamma} (x)  $ and $A_{\bm{n},j}^{\alpha,\beta,\gamma+1}(x)  \sim A^{\alpha,\beta,\gamma}_{{\bm{n}}+\bm{e}_j,j}(x)$,
\end{itemize}
for any $\bm{n}\in \bbN^2$ and any $j=1,2$.

Note that  {i)}--{iii)} follows from Corollary~\ref{cor:interlacing}. For {i)} and {ii)} the extra zero at $a$ or at $1$ does not matter since all the zeros of type II polynomials belong to $(a,1)$.  {iv)}--{vi)} is immediate from Corollary~\ref{cor:interlacingGer}.

\subsubsection{The Jacobi--Laguerre polynomials}
These are the type II multiple orthogonal polynomials $P_{\bm{n}}^{\beta,\gamma}$ corresponding to the system $\bm{\mu}^{\beta,\gamma}\in\mathcal{M}^2$ 
where 
\begin{align}
    d\mu^{\beta,\gamma}_1 & = 
    (x-a)^\beta |x|^\gamma e^{-x}\chi_{[a,0]}(x) \, dx, \\
    d\mu^{\beta,\gamma}_2 & = 
    (x-a)^\beta |x|^\gamma e^{-x}\chi_{[0,\infty)}(x) \, dx,
\end{align}
where  $\beta,\gamma>-1$, $a<0$. 
Denote $\bm{A}_{\bm{n}}^{\beta,\gamma}= ({A}_{\bm{n},1}^{\beta,\gamma},{A}_{\bm{n},2}^{\beta,\gamma})$ to be the corresponding type I polynomials.

Since $\bm{\mu}^{\beta,\gamma}\in\mathcal{M}^2$  is an Angelesco system,  and $(x-a)\bm{\mu}^{\beta,\gamma} = \bm{\mu}^{\beta+1,\gamma}$, $x\bm{\mu}^{\beta,\gamma} = \bm{\mu}^{\beta,\gamma+1}$ up to a trivial multiplicative normalization, we get as in the previous section: 
\begin{itemize}
    \item[\it{i)}] $P_{\bm{n}}^{\beta+1,\gamma} (x) \sim P_{\bm{n}}^{\beta,\gamma} (x)  $ and $P_{\bm{n}}^{\beta+1,\gamma} (x) \sim P^{\beta,\gamma}_{{\bm{n}}+\bm{e}_j}(x)$;
    \item[\it{ii)}] $x P_{\bm{n}}^{\beta,\gamma+1}(x) \sim P_{\bm{n}}^{\beta,\gamma} (x)  $ and $x P_{\bm{n}}^{\beta,\gamma+1}(x)  \sim P^{\beta,\gamma}_{{\bm{n}}+\bm{e}_j}(x)$;
    \item[\it{iii)}] $A_{\bm{n},j}^{\beta+1,\gamma} (x) \sim A_{\bm{n},j}^{\beta,\gamma} (x)$ and $A_{\bm{n},j}^{\beta+1,\gamma} (x) \sim A^{\beta,\gamma}_{{\bm{n}}+\bm{e}_j,j}(x)$;
    \item[\it{iv)}] $A_{\bm{n},j}^{\beta,\gamma+1}(x) \sim A_{\bm{n},j}^{\beta,\gamma}(x)$ and $A_{\bm{n},j}^{\beta,\gamma+1}(x)  \sim A^{\beta,\gamma}_{{\bm{n}}+\bm{e}_j,j}(x)$.
\end{itemize}
for any $\bm{n}\in \bbN^2$ and any $j=1,2$.

\subsubsection{The Jacobi--Hermite polynomials}
These are the type II multiple orthogonal polynomials $P_{\bm{n}}^{\gamma}$ corresponding to the system $\bm{\mu}^{\gamma}\in\mathcal{M}^2$ 
given by
\begin{align}
    d\mu^{\gamma}_1 & = 
    |x|^\gamma e^{-x^2/2}\chi_{(-\infty,0]}(x) \, dx, \\
    d\mu^{\gamma}_2 & = 
    x^\gamma e^{-x^2/2}\chi_{[0,\infty)}(x) \, dx,
\end{align}
where  $\gamma>-1$. Since $\bm{\mu}^{\gamma}\in\mathcal{M}^2$  is an Angelesco system,  and $x\bm{\mu}^{\gamma} = \bm{\mu}^{\gamma+1}$ up to a trivial multiplicative normalization, we get: 
\begin{itemize}
    \item[\it{i)}] $x P_{\bm{n}}^{\gamma+1}(x) \sim P_{\bm{n}}^{\gamma} (x)$ and $x P_{\bm{n}}^{\gamma+1}(x)  \sim P^{\gamma}_{{\bm{n}}+\bm{e}_j}(x)$;
    \item[\it{ii)}] $A_{\bm{n},j}^{\gamma+1}(x) \sim A_{\bm{n},j}^{\gamma}(x)$ and $A_{\bm{n},j}^{\gamma+1}(x)  \sim A^{\gamma}_{{\bm{n}}+\bm{e}_j,j}(x)$.
\end{itemize}
for any $\bm{n}\in \bbN^2$ and any $j=1,2$.

\smallskip

Zero interlacing  for type II Angelesco--Jacobi, Jacobi--Laguerre, and Jacobi--Hermite polynomials were proved by Mart\'{i}nez-Finkelshtein--Morales in their recent~\cite[Thm~2.2]{MarMor24}   with more involved arguments and for  $\bm{n}$ along the stepline (see also~\cite[Lem~2.1]{dosSan}). 

\subsection{Zero interlacing: discrete examples}\label{ss:interlacingDiscrete}

\subsubsection{The multiple Charlier polynomials}\label{ss:Charlier}
These are the type II multiple orthogonal polynomials $P_{\bm{n}}^{\bm{a}}$ corresponding to the discrete system $\bm{\mu}^{\bm{a}}\in\mathcal{M}^r$ given by
$$
\mu_j^{\bm{a}} = \sum_{k=0}^\infty \frac{a_j^k}{k!} \delta_{k},
$$
where $a_j>0$, and $\alpha_j \ne \alpha_s$ for $j\ne s$. 

Recall that $\bm{\mu}^{\bm{\alpha}}$ is an AT system. Now note that $x \bm{\mu}^{\bm{a}}$ is supported on $\{k\in\bbZ: k\ge 1\}$ and its weight at $\{k\}$ is $\frac{a_j^k}{k!} k = a_j \frac{a_j^{k-1}}{(k-1)!}$. This means that  up to an inconsequential multiplicative normalization, $x \bm{\mu}^{\bm{a}}$ shifted by $1$ to the left coincides with $\bm{\mu}^{\bm{a}}$ itself. Corollary~\ref{cor:interlacing} then gives 
\begin{equation}\label{eq:self-interlacing}
P_{\bm{n}}^{\bm{a}}(x-1)  \sim P_{\bm{n}}^{\bm{a}}(x) \quad \mbox{and} \quad xP_{\bm{n}}^{\bm{a}}(x-1)\sim P_{\bm{n}+\bm{e}_j}^{\bm{a}}(x) 
\end{equation}
for any $\bm{n}\in \bbN^r$ and any $1\le j\le r$.

The self-interlacing property $xP_{\bm{n}}^{\bm{a}}(x-1)  \sim P_{\bm{n}}^{\bm{a}}(x)$ implies a number of interesting properties, see~\cite[Sect~8.7]{Fisk}. In particular, it is easy to see that~\eqref{eq:self-interlacing} implies that the distance between any two roots of $P_{\bm{n}}^{\bm{a}}$ is at least 1. 
In Theorem~\ref{thm:minDistance} below we provide a more general statement.

\subsubsection{The multiple Meixner polynomials of the first kind}
These are the type II multiple orthogonal polynomials $P_{\bm{n}}^{\bm{c};\beta}$ corresponding to the discrete system $\bm{\mu}^{\bm{c};\beta}\in\mathcal{M}^r$ given by
$$
\mu_j^{\bm{c};\beta} = \sum_{k=0}^\infty \frac{(\beta)_k c_j^k}{k!} \delta_{k},
$$
where $\beta>0$, $0<c_j<1$, and $c_j \ne c_s$ for $j\ne s$. Here $(x)_0=1$, $(x)_k =x(x+1)\ldots(x+k-1)$ is the Pochhammer symbol.

Again, $\bm{\mu}^{\bm{c};\beta}$ is an AT system, and $x \bm{\mu}^{\bm{c};\beta}$ is supported on $\{k\in\bbZ: k\ge 1\}$ with the weight at $\{k\}$ ($k\ge 1$) being $\frac{(\beta)_k c_j^k}{k!} k = \beta c_j \frac{(\beta+1)_{k-1} c_j^{k-1}}{(k-1)!}$. This means that  up to an inconsequential multiplicative normalization, $x \bm{\mu}^{\bm{c};\beta}$ shifted by $1$ to the left coincides with $\bm{\mu}^{\bm{c};\beta+1}$. Corollary~\ref{cor:interlacing} then gives 
$$
P_{\bm{n}}^{\bm{c};\beta+1}(x-1)  \sim P_{\bm{n}}^{\bm{c};\beta}(x) \quad \mbox{and} \quad P_{\bm{n}}^{\bm{c};\beta+1}(x-1)\sim P_{\bm{n}+\bm{e}_j}^{\bm{c};\beta}(x) 
$$
for any $\bm{n}\in \bbN^r$ and any $1\le j\le r$.

\subsubsection{The multiple Meixner polynomials of the second kind}
These are the type II multiple orthogonal polynomials $P_{\bm{n}}^{c;\bm{\beta}}$ corresponding to the discrete system $\bm{\mu}^{c;\bm{\beta}}\in\mathcal{M}^r$ given by
$$
\mu_j^{c;\bm{\beta}} = \sum_{k=0}^\infty \frac{(\beta_j)_k c^k}{k!} \delta_{k},
$$
where $\beta_j>0$, $0<c<1$, and $\beta_j-\beta_s \notin\bbZ$ for $j\ne s$. 

$\bm{\mu}^{c;\bm{\beta}}$ is an AT system, and up to an inconsequential multiplicative normalization, $x \bm{\mu}^{c;\bm{\beta}}$ shifted by $1$ to the left coincides with $\bm{\mu}^{c;\bm{\beta}+\bm{1}}$. Corollary~\ref{cor:interlacing} then gives 
$$
P_{\bm{n}}^{c;\bm{\beta}+\bm{1}}(x-1)  \sim P_{\bm{n}}^{c;\bm{\beta}}(x) \quad \mbox{and} \quad P_{\bm{n}}^{c;\bm{\beta}+\bm{1}}(x-1)\sim P_{\bm{n}+\bm{e}_j}^{c;\bm{\beta}}(x) 
$$
for any $\bm{n}\in \bbN^r$ and any $1\le j\le r$.

\subsubsection{The multiple Krawtchouk polynomials}
These are the type II multiple orthogonal polynomials $P_{\bm{n}}^{N;\bm{p}}$ corresponding to the discrete finite system $\bm{\mu}^{N;\bm{p}}\in\mathcal{M}^r$ given by
$$
\mu_j^{N;\bm{p}} = \sum_{k=0}^N \binom{N}{k} p_j^k (1-p_j)^{N-k} \delta_{k},
$$
where $N\in\nat$, $0<p_j<1$, and $p_j-p_s \ne 0$ for $j\ne s$. 

$\bm{\mu}^{N;\bm{p}}$ is a discrete AT system (see~\cite{Discrete moprl}), and up to an inconsequential multiplicative normalization, $x \bm{\mu}^{N;\bm{p}}$ shifted by $1$ to the left coincides with $\bm{\mu}^{N-1;\bm{p}}$. Corollary~\ref{cor:interlacing} then gives 
$$
P_{\bm{n}}^{N-1;\bm{p}}(x-1)  \sim P_{\bm{n}}^{N;\bm{p}}(x) \quad \mbox{and} \quad P_{\bm{n}}^{N-1;\bm{p}}(x-1)\sim P_{\bm{n}+\bm{e}_j}^{N;\bm{p}}(x) 
$$
for any $\bm{n}\in \bbN^r$ with $|\bm{n}|<N$ and any $1\le j\le r$.

Similarly, up to an inconsequential multiplicative normalization, $(N-x) \bm{\mu}^{N;\bm{p}}$ coincides with $\bm{\mu}^{N-1;\bm{p}}$. Corollary~\ref{cor:interlacing} then gives 
$$
P_{\bm{n}}^{N-1;\bm{p}}(x)  \sim P_{\bm{n}}^{N;\bm{p}}(x) \quad \mbox{and} \quad P_{\bm{n}}^{N-1;\bm{p}}(x)\sim P_{\bm{n}+\bm{e}_j}^{N;\bm{p}}(x) 
$$
for any $\bm{n}\in \bbN^r$ with $|\bm{n}|<N$ and any $1\le j\le r$.

\subsubsection{The multiple Hahn polynomials}
These are the type II multiple orthogonal polynomials $P_{\bm{n}}^{\bm{\alpha};\beta;N}$ corresponding to the discrete finite system $\bm{\mu}^{\bm{\alpha};\beta;N}\in\mathcal{M}^r$ given by
$$
\mu_j^{\bm{\alpha};\beta;N} = \sum_{k=0}^N \frac{(\alpha_j+1)_k (\beta+1)_{N-k}}{k! (N-k)!} \delta_{k}
$$
where $N\in\nat$, $\beta>-1$, $\alpha_j>-1$, and $\alpha_j-\alpha_s \ne 0$ for $j\ne s$. 

Again, $\bm{\mu}^{\bm{\alpha};\beta;N}$ is a discrete AT system (see~\cite{Discrete moprl}), and up to an inconsequential multiplicative normalization, $x \bm{\mu}^{\bm{\alpha};\beta;N}$ shifted by $1$ to the left coincides with $\bm{\mu}^{\bm{\alpha};\beta;N-1}$, and $(N-x) \bm{\mu}^{\bm{\alpha};\beta;N}$ coincides with $\bm{\mu}^{\bm{\alpha};\beta;N-1}$. Corollary~\ref{cor:interlacing} then gives 
\begin{equation}\label{eq:Hahn1}
P_{\bm{n}}^{\bm{\alpha};\beta;N-1}(x-1)  \sim P_{\bm{n}}^{\bm{\alpha};\beta;N}(x) \quad \mbox{and} \quad P_{\bm{n}}^{\bm{\alpha};\beta;N-1}(x-1)\sim P_{\bm{n}+\bm{e}_j}^{\bm{\alpha};\beta;N}(x) 
\end{equation}
and
\begin{equation}\label{eq:Hahn2}
P_{\bm{n}}^{\bm{\alpha};\beta;N-1}(x)  \sim P_{\bm{n}}^{\bm{\alpha};\beta;N}(x) \quad \mbox{and} \quad P_{\bm{n}}^{\bm{\alpha};\beta;N-1}(x)\sim P_{\bm{n}+\bm{e}_j}^{\bm{\alpha};\beta;N}(x) 
\end{equation}
for any $\bm{n}\in \bbN^r$ with $|\bm{n}|<N$ and any $1\le j\le r$.

\subsection{Minimal distance between roots}\label{ss:Minimal}
\hfill\\

In Section~\ref{ss:Charlier} we observed that the minimal distance between consecutive roots (sometimes called the {\it mesh}) of any type II multiple Charlier polynomial is larger than 1. In the next theorem we generalize this to all the other discrete multiple orthogonal polynomials discussed here. This also serves as a new elementary proof for the case of one measure $r=1$, for which this property has been well known, see~\cite{ChiSta,Lev67,KraZar}.

\begin{thm}\label{thm:minDistance}
    The minimal distance between roots of any type II multiple Hahn (for $|\bm{n}|<N$), multiple Krawtchouk (for $|\bm{n}|<N$), or multiple Charlier polynomial is larger than 1. The minimal distance between roots of any type II multiple Meixner polynomial of the first or second kind is not smaller than 1. 
\end{thm}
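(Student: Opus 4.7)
The plan rests on the following elementary principle: for a monic polynomial $p$ with simple real zeros $z_1<\cdots<z_n$, strict interlacing of $p(x-1)$ with $p(x)$ (viewed as two monic polynomials of the same degree) is equivalent to $z_{j+1}-z_j>1$ for every $j$, since the shifted zeros $z_j+1$ must then alternate with the original zeros. Thus, in each family, it suffices to establish a self-shift interlacing $P_{\bm{n}}(x-1)\sim P_{\bm{n}}(x)$, or an appropriate approximate version of it.

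For the multiple Charlier polynomials, the self-shift interlacing $P_{\bm{n}}^{\bm{a}}(x-1)\sim P_{\bm{n}}^{\bm{a}}(x)$ is supplied directly by \eqref{eq:self-interlacing} (the extra factor $x$ in $xP_{\bm{n}}^{\bm{a}}(x-1)$ contributes a zero at $0$ to the left of every zero of $P_{\bm{n}}^{\bm{a}}$ and can be dropped without destroying strict interlacing), and the bound $>1$ follows. For the multiple Krawtchouk and Hahn polynomials, Section~\ref{ss:interlacingDiscrete} furnishes \emph{two} interlacings, arising from the one-step Christoffel transforms with $\Phi(x)=x$ and with $\Phi(x)=N-x$. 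Together they imply that each open gap $(z_j^N,z_{j+1}^N)$ between consecutive zeros of $P_{\bm{n}}^N$ contains both a zero of $P_{\bm{n}}^{N-1}$ and a zero of $P_{\bm{n}}^{N-1}(x-1)$. I would then argue by induction on $N$ (with trivial base case $|\bm{n}|\le 1$) that the inductive hypothesis applied to $P_{\bm{n}}^{N-1}$, together with the relative ordering of these two located points, forces the gap to exceed $1$.

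For the multiple Meixner polynomials of either kind, only the one-sided interlacing $P_{\bm{n}}^{\bm{c};\beta+1}(x-1)\sim P_{\bm{n}}^{\bm{c};\beta}(x)$ (and its second-kind analogue) is available, yielding $z_{j+1}^{\beta}>z_{j}^{\beta+1}+1$. The weaker non-strict conclusion of distance at least $1$ will then be obtained by invoking the continuous dependence of Meixner polynomials on the real parameter $\beta$ (respectively $\bm{\beta}$): the unit increment in the parameter shift above can be deformed continuously toward $0$ via a family of perturbed Christoffel transforms, and passing to the limit in the corresponding inequality collapses $z_{j+1}^{\beta}>z_{j}^{\beta+1}+1$ to $z_{j+1}^{\beta}\ge z_{j}^{\beta}+1$.

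The hardest step will be the inductive argument for Krawtchouk and Hahn: preserving the strict gap bound $>1$ across the induction, rather than degrading to $>0$, requires carefully exploiting the relative positions of the two located points in each gap, since they could a priori be arbitrarily close. A fallback I would try is to apply the two-step Christoffel transform $\Phi(x)=x(N-x)$, which relates $\bm{\mu}^{N;\bm{p}}$ directly to a shift of $\bm{\mu}^{N-2;\bm{p}}$, and to develop a two-step analogue of Corollary~\ref{cor:interlacing} from which the self-shift interlacing of $P_{\bm{n}}^{N}$ would follow more cleanly.
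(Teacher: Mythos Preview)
Your Charlier argument matches the paper's. For the other families, however, your plan diverges from the paper's proof and contains genuine gaps.

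\textbf{Hahn and Krawtchouk.} You propose induction on $N$ and acknowledge the difficulty yourself: from the unordered interlacings $P_{\bm{n}}^{N-1}(x)\sim P_{\bm{n}}^{N}(x)$ and $P_{\bm{n}}^{N-1}(x-1)\sim P_{\bm{n}}^{N}(x)$ alone you cannot tell \emph{which} zero of $P_{\bm{n}}^{N-1}$ and which zero of $P_{\bm{n}}^{N-1}(\cdot-1)$ fall into a given gap, so the inductive hypothesis on the mesh of $P_{\bm{n}}^{N-1}$ does not obviously force the gap to exceed $1$. The paper avoids induction on $N$ entirely. Instead it upgrades $\sim$ to the \emph{ordered} relations $\interl$ and $\interr$ by reading off the sign of ${P^{N}_{\bm{n}+\bm{e}_j}(z_0)}/{P^{N}_{\bm{n}}(z_0)}$ at $z_0=0$ and $z_0=N$ in \eqref{eq:ChrP1} and feeding this into Lemma~\ref{lem:Hermite}. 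This yields $xP_{\bm{n}}^{N-1}(x-1)\interl P^{N}_{\bm{n}+\bm{e}_j}(x)$ and $(N-x)P_{\bm{n}}^{N-1}(x)\interr P^{N}_{\bm{n}+\bm{e}_j}(x)$, which pins down the indexing: the $j$-th zero of $P_{\bm{n}}^{N-1}$ \emph{and} the $j$-th zero of $P_{\bm{n}}^{N-1}(\cdot-1)$ both lie in the $j$-th gap of $P^{N}_{\bm{n}+\bm{e}_j}$. The gap length $>1$ is then immediate, and an induction on the gap index $j$ (not on $N$) finishes. Your fallback idea of a two-step transform is unnecessary once the ordered interlacing is in place.

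\textbf{Meixner (both kinds).} Your proposed continuity argument does not work as stated: there is no ``family of perturbed Christoffel transforms'' taking $\beta$ to $\beta+\varepsilon$ for small $\varepsilon$, since multiplying the Meixner weight by any polynomial shifts $\beta$ by a nonnegative integer. So the inequality $z_{j+1}^{\beta}>z_{j}^{\beta+1}+1$ cannot be deformed to $z_{j+1}^{\beta}\ge z_{j}^{\beta}+1$ along a Christoffel path. The paper's route is entirely different: it invokes the well-known limit of multiple Hahn polynomials to multiple Meixner polynomials (in the parameters $\bm{\alpha},\beta,N$), and passes the strict inequality $>1$ already established for Hahn through the limit to obtain $\ge 1$.
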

\begin{proof}
    For the Charlier case this is clear from~\eqref{eq:self-interlacing}. 
    
    Consider the multiple Hahn case. 
    Since all the zeros of type II polynomials belong to the interval $[0,N]$ (AT systems), then interlacing in~\eqref{eq:Hahn1} and ~\eqref{eq:Hahn2} can be strengthened to $xP_{\bm{n}}^{\bm{\alpha};\beta;N-1}(x-1) 
    \sim
    P_{\bm{n}+\bm{e}_j}^{\bm{\alpha};\beta;N}(x)$     and 
    $(N-x) P_{\bm{n}}^{\bm{\alpha};\beta;N-1}(x-1) 
    \sim
    P_{\bm{n}+\bm{e}_j}^{\bm{\alpha};\beta;N}(x)$.

    This implies that below the first zero of $P_{\bm{n}+\bm{e}_j}^{\bm{\alpha};\beta;N}(x)$ there is no zero of either $P_{\bm{n}}^{\bm{\alpha};\beta;N-1}(x-1)$ nor $P_{\bm{n}}^{\bm{\alpha};\beta;N-1}(x)$. Then between the first and the second zero of $P_{\bm{n}+\bm{e}_j}^{\bm{\alpha};\beta;N}(x)$ there is exactly one zero of $P_{\bm{n}}^{\bm{\alpha};\beta;N-1}(x-1)$ and of $P_{\bm{n}}^{\bm{\alpha};\beta;N-1}(x)$. This proves that the distance between the first two zeros of $P_{\bm{n}+\bm{e}_j}^{\bm{\alpha};\beta;N}(x)$ is larger than one. An easy inductive argument can be used to complete the proof.

    The proof for the multiple Krawtchouk polynomials is identical.

    For the Meixner of the first and second kind, recall that they can be obtained from the Hahn polynomials by taking a certain limit with respect to the coefficients $\bm{\alpha}, \beta, N$, see~\cite{BCVA}. Since the minimal distance for the Hahn polynomials is $>1$, we get $\ge 1$ in the limit. 
\end{proof}

\bibsection

\begin{biblist}[\small]


\bib{ADMVA}{article}{
    AUTHOR = {Aptekarev, A.I.},
    AUTHOR = {Derevyagin, M.},
    AUTHOR = {Miki, H.},
    AUTHOR = {Van Assche, W.},
     TITLE = {Multidimensional {T}oda lattices: continuous and discrete time},
   JOURNAL = {SIGMA Symmetry Integrability Geom. Methods Appl.},
    VOLUME = {12},
      YEAR = {2016},
     PAGES = {Paper No. 054, 30},
}

\bib{Integrable Systems}{article}{
   author={Aptekarev, A.I.},
   author={Derevyagin, M.},
   author={Van Assche, W.},
   title={Discrete integrable systems generated by Hermite-Padé approximants},
   journal={Nonlinearity},
   volume={29},
   number={5},
   year={2016},
   pages={1487-1506},
}

\bib{Jacobi operator on trees}{article}{
   author={Aptekarev, A.I.},
   author={Denisov, S.A.},
   author={Yattselev, M.L.},
   title={Self-adjoint Jacobi matrices on trees and multiple orthogonal polynomials},
   journal={Transactions of the American Mathematical Society},
   volume={373},
   number={2},
   year={2020},
   pages={875-917},
}

\bib{ADL}{article}{
   author={Aptekarev, A.I.},
   author={Dyachenko, A.},
   author={Lysov, V.},
   title={On Perfectness of Systems of Weights Satisfying Pearson's Equation with Nonstandard Parameters},
   journal={Axioms},
   volume={12},
   year={2023},
   month={1},
   pages={89},
}

\bib{Discrete moprl}{article}{
   author={Arvesú, J.},
   author={Coussement, J.},
   author={Van Assche, W.},
   title={Some Discrete Multiple Orthogonal Polynomials},
   journal={Journal of Computational and Applied Mathematics},
   volume={153},
   year={2001},
   pages={19-45},
}


\bib{BaiDer}{article}{
   author={Bailey, R.},
   author={Derevyagin, M.},
   title={Complex Jacobi matrices generated by Darboux transformations},
   journal={J. Approx. Theory},
   volume={288},
   date={2023},
   pages={Paper No. 105876, 33},
}

\bib{BCVA}{article}{
   author={Beckermann, B.},
   author={Coussement, J.},
   author={Van Assche, W.},
   title={Multiple Wilson and Jacobi-Pi\~neiro polynomials},
   journal={J. Approx. Theory},
   volume={132},
   date={2005},
   number={2},
   pages={155--181}
}

\bib{BFM22}{article}{
   author={Branquinho, A.},
   author={Foulqui\'{e}-Moreno, A.},
   author={Ma\~{n}as, M.},
   title={Multiple orthogonal polynomials: Pearson equations and Christoffel
   formulas},
   journal={Anal. Math. Phys.},
   volume={12},
   date={2022},
   number={6},
   pages={Paper No. 129, 59},
}

\bib{Bueno and Dopico}{article}{
   author={Bueno, M.I.},
   author={Dopico, F.M.},
   title={A more accurate algorithm for computing the Christoffel transformations},
   journal={Journal of Computational and Applied Mathematics},
   volume={205},
   year={2007},
   pages={567-582},
}

\bib{BueMar}{article}{
   author={Bueno, M. I.},
   author={Marcell\'an, F.},
   title={Darboux transformation and perturbation of linear functionals},
   journal={Linear Algebra Appl.},
   volume={384},
   date={2004},
   pages={215--242},
}

\bib{ChiSta}{article}{
   author={Chihara, L.},
   author={Stanton, D.},
   title={Zeros of generalized Krawtchouk polynomials},
   journal={J. Approx. Theory},
   volume={60},
   date={1990},
   number={1},
   pages={43--57},
   issn={0021-9045}
}

\bib{Chihara}{book}{
   author={Chihara, T.S.},
   title={An Introduction to Orthogonal Polynomials},
   isbn={9780486479293},
   series={Mathematics and Its Applications},
   volume={13},
   publisher={Gordon and Breach Science Publishers, Inc.},
   year={1978},
}

\bib{Christoffel}{article}{
   title={{\"U}ber die Gau{\ss}ische Quadratur und eine Verallgemeinerung derselben.},
   author={Christoffel, E.B.},
   journal={Journal f{\"u}r die Reine und Angewandte Mathematik (Crelles Journal)},
   volume={55},
   year={1858},
   pages={61-82},
}

\bib{mopuc recurrence}{article}{
   author={Cruz-Barroso, R.},
   author={Díaz Mendoza, C.},
   author={Orive, R.},
   title={Multiple orthogonal polynomials on the unit circle. Normality and recurrence relations},
   journal={J. Comput. Appl. Math.},
   volume={284},
   year={2015},
   pages={115--132},
}

\bib{DK04}{article}{
   author={Daems, E.},
   author={Kuijlaars, A. B. J.},
   title={A Christoffel-Darboux formula for multiple orthogonal polynomials},
   journal={J. Approx. Theory},
   volume={130},
   date={2004},
   number={2},
   pages={190--202},
}

\bib{DenYat}{article}{
   author={Denisov, S. A.},
   author={Yattselev, M. L.},
   title={Spectral theory of Jacobi matrices on trees whose coefficients are
   generated by multiple orthogonality},
   journal={Adv. Math.},
   volume={396},
   date={2022},
   pages={Paper No. 108114, 79}
}

\bib{dosSan}{article}{
   author={dos Santos, E. J. C.},
   title={Monotonicity of zeros of Jacobi-Angelesco polynomials},
   journal={Proc. Amer. Math. Soc.},
   volume={145},
   date={2017},
   number={11},
   pages={4741--4750}
}

\bib{Dol}{article}{
   author={Doliwa, A.},
   title={Hermite-{P}ad\'{e} approximation, multiple orthogonal polynomials, and multidimensional {T}oda equations},
   journal={},
   volume={},
   date={},
   number={},
   pages={arXiv:2310.15116}
}

\bib{DFK}{article}{
    AUTHOR = {Duits, M.},
    AUTHOR = {Fahs, B.},
    AUTHOR = {Kozhan, R.},
     TITLE = {Global fluctuations for multiple orthogonal polynomial
              ensembles},
   JOURNAL = {J. Funct. Anal.},
  FJOURNAL = {Journal of Functional Analysis},
    VOLUME = {281},
      YEAR = {2021},
    NUMBER = {5},
     PAGES = {Paper No. 109062, 49},
}


\bib{Computing NNR}{article}{
   author={Filipuk, G.},
   author={Haneczok, M.},
   author={Van Assche, W.},
   title={Computing recurrence coefficients of multiple
orthogonal polynomials},
   journal={Numerical Algorithms},
   volume={70},
   year={2015},
   number={3},
   pages={519-543},
}

\bib{FMM}{article}{
   author={Fidalgo Prieta, U.},
   author={Medina Peralta, S.},
   author={M\'inguez Ceniceros, J.},
   title={Mixed type multiple orthogonal polynomials: perfectness and
   interlacing properties of zeros},
   journal={Linear Algebra Appl.},
   volume={438},
   date={2013},
   number={3},
   pages={1229--1239}
}

\bib{Fisk}{article}{
   author={Fisk, S.},
   title={Polynomials, roots, and interlacing, v.2},
   journal={},
   volume={},
   date={},
   number={},
   pages={https://arxiv.org/abs/math/0612833v2},
}

\bib{Galant}{article}{
   author={Galant, D.},
   title={An Implementation of Christoffel's Theorem in the Theory of Orthogonal Polynomials},
   journal={Mathematics of Computation},
   volume={25},
   year={1971},
   pages={111-113},
}

\bib{Gal92}{article}{
   author={Galant, D.},
   title={Algebraic methods for modified orthogonal polynomials},
   journal={Math. Comp.},
   volume={59},
   date={1992},
   number={200},
   pages={541--546},
}

\bib{Gautschi}{article}{
   author={Gautschi, W.},
   title={An Algorithmic Implementation of the Generalized Christoffel Theorem},
   journal={Numerical Integration, International Series of Numerical Mathematics},
   volume={57},
   year={1982},
   pages={89-106},
}

\bib{Gautschi book}{book}{
   author={Gautschi, W.},
   title={Orthogonal Polynomials, Computation and Approximation},
   isbn={9780198506720},
   series={Numerical Mathematics and Scientific Computation},
   publisher={Oxford University Press},
   year={2004},
}

\bib{Geronimus}{article}{
    AUTHOR = {Geronimus, J.},
     TITLE = {On polynomials orthogonal with regard to a given sequence of
              numbers},
   JOURNAL = {Comm. Inst. Sci. Math. M\'ec. Univ. Kharkoff [Zapiski Inst.
              Mat. Mech.] (4)},
    VOLUME = {17},
      YEAR = {1940},
     PAGES = {3--18},
}

\bib{Golub and Kautsky}{article}{
  author={Golub, G.H.},
  author={Kautsky, J.},
  title={On the Calculation of Jacobi Matrices},
  journal={Linear Algebra and its Applications},
  volume={52-53},
  year={1983},
  pages={439-455}
}

\bib{GolKau83}{article}{
   author={Golub, G. H.},
   author={Kautsk\'y, J.},
   title={Calculation of Gauss quadratures with multiple free and fixed
   knots},
   journal={Numer. Math.},
   volume={41},
   date={1983},
   number={2},
   pages={147--163},
}

\bib{Gus}{article}{
   author={Guseinov, G. Sh.},
   title={Inverse spectral problems for tridiagonal $N$ by $N$ complex
   Hamiltonians},
   journal={SIGMA Symmetry Integrability Geom. Methods Appl.},
   volume={5},
   date={2009},
   pages={Paper 018, 28},
}

\bib{Interlacing}{article}{
  author={Haneczok, M.},
  author={Van Assche, W.},
  title={Interlacing properties of zeros of multiple orthogonal polynomials},
  journal={Journal of Mathematical Analysis and Applications},
  year={2012},
  volume={389},
  pages={429-438}
}

\bib{Ismail}{book}{
   author={Ismail, M.E.H.},
   title={Classical and Quantum Orthogonal
Polynomials in One Variable},
   isbn={9780521782012},
   series={Encyclopedia of Mathematics and its Applications},
   Volume={98},
   publisher={Cambridge University Press},
   year={2005},
}

\bib{KauGol}{article}{
   author={Kautsk\'y, J.},
   author={Golub, G. H.},
   title={On the calculation of Jacobi matrices},
   journal={Linear Algebra Appl.},
   volume={52/53},
   date={1983},
   pages={439--455},
}

\bib{KVGeronimus}{article}{
   author={Kozhan, R.},
   author={Vaktn\"{a}s, M.},
   title={Determinantal Formulas for Rational Perturbations of
Multiple Orthogonality Measures},
   journal={},
   volume={},
   date={},
   number={},
   pages={under submission, arXiv:2407.13961},
}

\bib{KVMOPUC}{article}{
   author={Kozhan, R.},
   author={Vaktn\"as, M.},
   title={Szeg\H o{} recurrence for multiple orthogonal polynomials on the
   unit circle},
   journal={Proc. Amer. Math. Soc.},
   volume={152},
   date={2024},
   number={7},
   pages={2983--2997}
}

\bib{KVInterlacing}{article}{
   author={Kozhan, R.},
   author={Vaktn\"{a}s, M.},
   title={Zeros of Multiple Orthogonal Polynomials: Location and Interlacing},
   journal={},
   volume={},
   date={},
   number={},
   pages={under submission, arXiv:2503.15122},
}

\bib{KraZar}{article}{
   author={Krasikov, I.},
   author={Zarkh, A.},
   title={On zeros of discrete orthogonal polynomials},
   journal={J. Approx. Theory},
   volume={156},
   date={2009},
   number={2},
   pages={121--141}
}


\bib{Lev67}{article}{
   author={Levit, R. J.},
   title={The zeros of the Hahn polynomials},
   journal={SIAM Rev.},
   volume={9},
   date={1967},
   pages={191--203}
}

\bib{ManasRojas}{article}{
  author={Ma\~{n}as, M.},
 author={Rojas, M.},
  title={General Christoffel Perturbations for Mixed Multiple Orthogonal Polynomials},
   pages={arXiv:2405.11630},
}

\bib{Mar91}{article}{
   author={Maroni, P.},
   title={Une th\'eorie alg\'ebrique des polyn\^omes orthogonaux.
   Application aux polyn\^omes orthogonaux semi-classiques},
   language={French},
   conference={
      title={Orthogonal polynomials and their applications},
      address={Erice},
      date={1990},
   },
   book={
      series={IMACS Ann. Comput. Appl. Math.},
      volume={9},
      publisher={Baltzer, Basel},
   },
   date={1991},
   pages={95--130},
}

\bib{MarMor24}{article}{
  author={Mart\'{i}nez-Finkelshtein, A.},
  author={Morales, R.},
  title={Interlacing and monotonicity of zeros of Angelesco--Jacobi polynomials},
  journal={Pure and Applied Functional Analysis},
  pages={to appear},
  year={2024}
}

\bib{MarMorPer}{article}{
   author={Mart\'{i}nez-Finkelshtein, A.},
  author={Morales, R.},
  author={Perales, D.},
   title={Zeros of generalized hypergeometric polynomials via finite free convolution. Applications to multiple orthogonality},
   journal={Constr Approx},
   year={2025},
   DOI={https://doi.org/10.1007/s00365-025-09703-w},
}




\bib{QD}{article}{
   author={Rutishauser, H.},
   title={Der Quotienten-Differenzen-Algorithmus},
   language={German},
   journal={Z. Angew. Math. Phys.},
   volume={5},
   date={1954},
   pages={233--251},
   issn={0044-2275},
}


\bib{SpiZhe}{article}{
   author={Spiridonov, V.},
   author={Zhedanov, A.},
   title={Discrete Darboux transformations, the discrete-time Toda lattice,
   and the Askey-Wilson polynomials},
   journal={Methods Appl. Anal.},
   volume={2},
   date={1995},
   number={4},
   pages={369--398}
}

\bib{Uvarov}{article}{
   author={Uvarov, V.B.},
   title={The connection between systems of polynomials orthogonal with respect to different distribution functions},
   journal={USSR Computational Mathematics and Mathematical Physics},
   volume={9},
   year={1969},
   pages={25-36},
}

\bib{MVthesis}{thesis}{
   author = {Vaktn{\"a}s, M.},
   institution = {Master Thesis (Uppsala University, Analysis and Probability Theory)},
   pages = {42},
   title = {Multiple Orthogonal Polynomials {\&} Modifications of Spectral Measures},
   series = {U.U.D.M. project report},
   number = {2021:47},
   year = {2021}
}

\bib{NNR}{article}{
   author={Van Assche, W.},
   title={Nearest neighbor recurrence relations for multiple
orthogonal polynomials},
   journal={Journal of Approximation Theory},
   volume={163},
   year={2011},
   pages={1427-1448},
}

\bib{Zhe97}{article}{
   author={Zhedanov, A.},
   title={Rational spectral transformations and orthogonal polynomials},
   journal={J. Comput. Appl. Math.},
   volume={85},
   date={1997},
   number={1},
   pages={67--86},
}

\end{biblist}
\vspace{0.25cm}

\end{document}